\def\title#1{{\Large\bf  \begin{center} #1 \vspace{0pt} \end{center}  } \smallskip}
\def\authors#1{{\bf \begin{center} #1 \vspace{0pt} \end{center} } \smallskip}
\def\institution#1{{\sl \begin{center} #1 \vspace{0pt} \end{center} } }
\def\keywords#1{\bigskip \par\noindent{\bf Keywords: }#1\par}
\def\AMS#1{\par\noindent{\bf AMS subject classifications: }#1\par}
\newtheorem{theorem}{Theorem}
\newtheorem{lemma}[theorem]{Lemma}
\newtheorem{cor}[theorem]{Corollary}
\newtheorem{appxlem}{Lemma}[section]
\newtheorem{appxcor}[appxlem]{Corollary}
\newtheorem*{remark*}{Remark}
\newcommand{\eps}{\varepsilon}
\newcommand{\ed}{\exp(\delta)}
\newcommand{\emd}{\exp(-\delta)}
\newcommand{\lrr}[1]{\left( #1 \right)}
\newcommand{\lrs}[1]{\left[ #1 \right]}
\newcommand{\lrc}[1]{\left \{ #1 \right \}}
\newcommand{\lrv}[1]{\left \vert #1 \right \vert}
\newcommand{\argmin}[1]{\underset{#1}{\operatorname{arg} \operatorname{min}}}
\newcommand{\Mn}{\mathcal{M}_n}
\newcommand{\abs}[1]{\vert #1 \vert}
\newcommand{\R}{{\mathbb R}}
\newcommand{\E}{{ \mathbb E}}
\renewcommand{\P}{{\mathbb P}}
\newcommand{\N}{{\mathbb N}}
\renewcommand{\L}{{\mathbb L}}
\newcommand{\e}{\text{e}}
\DeclareMathOperator{\Var}{Var}
\DeclareMathOperator{\trace}{trace}
\newcommand{\vast}{\bBigg@{3}}
\newcommand{\Vast}{\bBigg@{5}}
\begin{document}
\sloppy

\title{Prediction out-of-sample using block shrinkage estimators: model selection and predictive inference}

\authors{Hannes Leeb and Nina S.\ Senitschnig} 
\institution{Department of Statistics, University of Vienna}

\bigskip
\noindent {\Large\bf Abstract}
\medskip

In a linear regression model with random design, we consider a family of candidate models from which we want to select a `good' model for prediction out-of-sample. We fit the models using block shrinkage estimators, and we focus on the challenging situation where the number of explanatory variables can be of the same order as sample size and where the number of candidate models can be much larger than sample size. We develop an estimator for the out-of-sample predictive performance, and we show that the empirically best model is asymptotically as good as the truly best model. Using the estimator corresponding to the empirically best model, we construct a prediction interval that is approximately valid and short with high probability, i.e., we show that the actual coverage probability is close to the nominal one and that the length of this prediction interval is close to the length of the shortest but infeasible prediction interval. All results hold uniformly over a large class of data-generating processes. These findings extend results of Leeb (2009, Ann. Stat. 37:2838-2876), where the models are fit using least-squares estimators, and of \cite{huber13}, where the models are fit using shrinkage estimators without block structure.

\keywords{(block) James--Stein estimator, prediction out-of-sample, random design}
\AMS{62M20, 62J07}  

\newpage

\section{Introduction}
\label{introduction}

We study the problem of how to find a `good' model for prediction out-of-sample and how to use this model for designing `valid' and `short' prediction intervals. We assume that the data are generated from a possibly infinite dimensional linear model where we impose some technical but rather innocuous assumptions. In contrast to other authors, we do not impose sparsity on the parameters, and we do not require a special structure of the covariance matrix. We allow the collection of models to be very large and the models in our collection to be very complex, i.e., we allow the number of models to exceed sample size and the number of parameters to be of the same order as sample size. The models are fit using a block James--Stein estimator. Our results are conditional on a training sample when we repeatedly predict future observations, i.e., in our analysis we fix the training data and average over future observations.  All the results presented are finite sample results. 

Model selection is a well studied field in statistics. It is also well known that inference after model selection needs special treatment, i.e., ignoring the selection step and doing inference as if the model was chosen a priori leads to invalid conclusions because model selection is usually data-driven and hence random (see e.g.\ \cite{poetscher91} or \cite{leebpoetscher05}). For an overview of model selection procedures and the properties of post-selection estimators see \cite{leebpoetscher08}. 

\cite{berketal13} propose valid confidence intervals post-model selection regardless of which model selection procedure was used. \cite{bachocleebpoetscher18} generalize these results to post-model selection predictors. Both articles consider linear models with homoscedastic errors. \cite{bachocdavidlukas17} develop a general framework allowing for linear models with heteroscedastic errors or binary response models with general link functions. The results in these papers can not be compared to ours because they are covering a non-standard and model dependent target. \cite{leeetal15}, \cite{leeetal16} and \cite{tibshiranietal16} (and the references in these papers) focus on the same target and discuss post-model selection inference in a `condition on selection' framework. Post-model selection inference on the conventional parameters is covered in \cite{poetscherschneider10} and \cite{schneider16}. They consider confidence sets based on thresholding estimators in Gaussian linear regression models. The papers by \cite{bellonietal13} and \cite{bellonietal14} as well as by \cite{vandegeer14} and \cite{zhangzhang14} develop valid inference procedures under sparsity conditions. It is crucial to emphasize that we do not assume sparsity or any special structure of the unknown parameters. With regard to form and content, this work is closely related to the paper by \cite{leeb09} where the models are fit using least-squares estimators. Here, we extend these results to a larger class of estimators.

This paper is organized as follows: we give an introduction to the setting and the framework in Section~\ref{sec:framework}, i.e., we describe the overall model, the collection of candidate models and how we fit the candidate models using block James--Stein-type shrinkage estimators. Furthermore, we introduce how we measure the performance of the competing models via the conditional mean-squared prediction error. In Section~\ref{sec:selection}, we deal with selecting the best model and show how we can estimate its performance. Section~\ref{sec:inference} addresses the problem of constructing prediction intervals and shows that the intervals are asymptotically valid and short with high probability.

\section{Framework}
\label{sec:framework}

As data-generating process, we consider a linear regression model that can be infinite dimensional. The candidate models correspond to finite dimensional submodels of this overall model. For simplicity, we assume that we do not have an intercept and that the explanatory variables are centered.\footnote{We believe that including an intercept and non-centered explanatory variables does not change the results qualitatively and that we can handle this task using similar methods as in \cite{leeb09}.} We further assume that we have a block structure in the data and that we would like to estimate and shrink the parameters corresponding to the blocks differently.\footnote{For the unblocked case, see \cite{leebsenitschnig15}.} Of course, there are different ways of shrinking in a block design, we will discuss and further describe the one strategy we pursue. We use a training sample to fit the models and measure the out-of sample predictive performance of each model using the conditional mean-squared prediction error when repeatedly predicting future observations keeping the training sample fixed. 

More formally, as data-generating process we consider a response variable $y$, a sequence of stochastic explanatory variables $x=(x_i)_{i \geq 1}$, a sequence of unknown parameters $\beta=(\beta_i)_{i \geq 1}$ and an error term $u$ that are related via
\begin{gather} \label{eq:overall.model}
y = \sum_{i=1}^{\infty} x_i \beta_i + u.
\end{gather}
Throughout the paper, we will assume that the error $u$ is centered with variance $\sigma^2>0$, that $x$ is centered with variance-covariance matrix $\Sigma=\E[x_i x_j]_{i \geq 1, j \geq 1}$ such that the $x_i$’s are not perfectly correlated among themselves, i.e., we require for each $k \geq 1$ and integers $i_1, \ldots, i_k$ that the variance-covariance matrix of $(x_{i_1}, \ldots, x_{i_k})'$ is positive definite, that $u$ and $x$ are independent and that the series converges in squared mean. The assumptions made so far are rather standard and innocuous. We assume further that $(y, x)$ is jointly Gaussian. We heavily rely on Gaussianity because we need the conditional mean to be linear and the conditional variance to be constant, a property that is fulfilled only for the Gaussian distribution. Results of \cite{steinbergerleeb18} and \cite{milovic15} show that approximate linearity of the conditional expectation holds for a large class of distributions. Ongoing work of \citeauthor{milovic15} and \citeauthor{leeb09} deal with the conditional variance being approximately constant. We are confident that it is possible to get rid of the normality assumption using their results.

We are given a sample of size $n$ which will be denoted by $(Y,X)$, where $Y = (y^{(1)}, \ldots, y^{(n)})'$ is a $n$-vector and $X = (x^{(1)}, \ldots, x^{(n)})'$ is a $n \times \infty$ net and where $(y^{(j)}, x^{(j)})$ are i.i.d.\ copies of the random variables in \eqref{eq:overall.model}. Further, we consider a collection of candidate models $\mathcal{M}_n$ that are finite-dimensional submodels of the overall model in \eqref{eq:overall.model}, where we restrict some components of $\beta$ to zero. Each of these models can be identified by a $0$-$1$ sequence $m=(m_i)_{i \geq 1}$, where $m_i=0$ if the corresponding $\beta_i$ is restricted to zero and where $m_i=1$ if $\beta_i$ is not restricted. For every model $m \in \Mn$, we denote by $\abs{m}$ the number of unrestricted components of $\beta$, i.e., $\abs{m}=\sum_{i \geq 1} m_i$, and we assume that $6 \leq \abs{m} <n$.

Throughout, we consider fixed parameters $\beta$, $\sigma^2$ and $\Sigma$ as in \eqref{eq:overall.model}, a fixed sample size $n$ and a fixed model $m$. Let $\hat{\beta}^B(m)$ denote the estimator of $\beta$ in model $m$. If $m_i=0$, then the $i$-th component of $\hat{\beta}^B(m)$ is defined as zero. For the remaining components, note the following: We write $x(m)$ for those entries in $x$ where $m_i=1$, and we write $X(m)$ for those columns of $X$ that are included in the submodel $m$, i.e., $X(m)$ is a $n \times \abs{m}$ matrix. Because  $(Y, X)$ consists of i.i.d.\ samples of the pair $(y,x)$ it follows that $(Y, X(m))$ consists of i.i.d.\ samples of the pair $(y, x(m))$. Because we assumed that $(y,x)$ is jointly Gaussian, we know that the conditional distribution of $y$ given $x(m)$ is Gaussian where the conditional mean is linear in $x(m)$, i.e., equals $x(m)'\theta$ for some appropriate $\abs{m}$-vector $\theta$,  and the conditional variance is constant in $x(m)$ and equals, say, $\sigma^2(m)$. Hence, we can write
\begin{gather} \label{eq:model.m}
Y= X(m) \theta + w
\end{gather}
where $w \sim N(0, \sigma^2(m) I_n)$ for some $\sigma^2(m)>0$ and where $X(m)$ and $w$ are independent. We assume that $X(m)$ consists of two blocks of dimension $n \times \abs{m_1}$ and $n \times \abs{m_2}$ with $\abs{m}=\abs{m_1} + \abs{m_2}$ such that $3 \leq \abs{m_1}$ and $3 \leq \abs{m_2}$, i.e., $X(m)=(X_1(m), X_2(m))$. We can rewrite the model in the preceding display as
\begin{align}
Y & = X_1(m) \theta_1 + X_2(m) \theta_2 + w \nonumber \\
& = X_1(m) \theta_1^* + X_2^*(m) \theta_2 + w, \label{eq:model.o}
\end{align}
where $\theta_1^* = \theta_1 + (X_1(m)'X_1(m))^{-1} X_1(m)'X_2(m) \theta_2$ and $X_2^*(m) = M_1(m) X_2(m)$ with $M_1(m)= I_n - X_1(m)(X_1(m)'X_1(m))^{-1}X_1(m)'$, i.e., the projection on the orthogonal complement of the column span of $X_1(m)$. On the probability zero event where the inverse matrix does not exist, we use the Moore-Penrose inverse instead of the usual inverse. Note that the two regressors in \eqref{eq:model.o} are orthogonal so that we can estimate $\theta_1^*$ and $\theta_2$ separately. Let $\hat{\theta}_1^*$ and $\hat{\theta}_2$ be the least-squares estimators of $\theta_1^*$ and $\theta_2$, i.e., $\hat{\theta}_1^* = (X_1(m)'X_1(m))^{-1}X_1(m)'Y$ and $\hat{\theta}_2=(X_2(m)'M_1(m)X_2(m))^{-1}X_2(m)'M_1(m) Y$. Let $\hat{\theta}_{1}^{*JS}$ and $\hat{\theta}_{2}^{JS}$ be the positive part James--Stein-type shrinkage estimators that are obtained by shrinking the least-squares estimators $\hat{\theta}_1^*$ and $\hat{\theta}_2$, i.e., %
\begin{align}
\hat{\theta}_1^{*JS} & =  \lrr{ 1 - c_1 \hat{\sigma}^2(m) \frac{ \abs{m_1}}{\hat{\theta}_1^{*}{'}X_1(m)'X_1(m)\hat{\theta}_1^* } }_+ \hat{\theta}_1^{*},  \label{eq:theta1s.js} \\
\hat{\theta}_2^{JS} & = \lrr{1- c_2 \hat{\sigma}^2(m) \frac{\abs{m_2}}{\hat{\theta}_2'X_2^*(m)'X_2^*(m) \hat{\theta}_2} }_+ \hat{\theta}_2,  \label{eq:theta2.js}
\end{align}
where $(x)_+=\max \{x,0 \}$, where $c_1 \geq 0$ and $c_2 \geq 0$ are tuning parameters and where $\hat{\sigma}^2(m)$ is the usual unbiased variance estimator in model \eqref{eq:model.m}. Note that we can rewrite the estimators as $\hat{\theta}_1^{*JS}  =  \lrr{ 1 - a_1(m) } \hat{\theta}_1^{*} $ and $\hat{\theta}_2^{JS} = \lrr{1- a_2(m) } \hat{\theta}_2$ where the shrinkage factors are
 \begin{align}
 \begin{split}  \label{eq:a1a2}
 a_1(m) & =\min \lrc{ 1, c_1 \hat{\sigma}^2(m) \frac{ \abs{m_1}}{\hat{\theta}_1^{*}{'}X_1(m)'X_1(m)\hat{\theta}_1^*}}, \\
 a_2(m) & = \min \lrc{1, c_2 \hat{\sigma}^2(m) \frac{\abs{m_2}}{\hat{\theta}_2'X_2^*(m)'X_2^*(m) \hat{\theta}_2} }. 
 \end{split}
\end{align}
Because of the definition of $\theta_1^*$, we set $\hat{\theta}_1^{JS} = \hat{\theta}_1^{*JS} - (X_1(m)'X_1(m))^{-1} X_1(m)'X_2(m) \hat{\theta}_2^{JS}$. Hence, for the remaining $\abs{m}$ components of $\hat{\beta}^B(m)$, we use the vector $(\hat{\theta}_1^{JS}{'}, \hat{\theta}_2^{JS}{'})'$.

We assume that we have a new copy of the random variables $(y,x)$, independent of the training sample, that we will denote by $(y^{(0)}, x^{(0)})$. We predict $y^{(0)}$ using the predictor
\begin{gather*}
\hat{y}^{(0)}(m) = \sum_{i=1}^{\infty} x_i^{(0)} \hat{\beta}_i^B(m).
\end{gather*}
The conditional mean-squared prediction error corresponding to model $m$ will be denoted by $\rho^2(m)$ and is defined as
\begin{gather*}
\rho^2(m) = \E \lrs{ \lrr{ \hat{y}^{(0)}(m) - y^{(0)} }^2 \Big \Vert X, Y},
\end{gather*}
where the expectation is taken with respect to $(y^{(0)}, x^{(0)})$ and where the training sample is treated as fixed. We are interested in the model that performs best within our class of candidate models, i.e., we are looking for the minimizer of $\rho^2(m)$ over $m \in \mathcal{M}_n$. Because $\rho^2(m)$ depends on the unknown parameters $\beta$, $\Sigma$ and $\sigma^2$ in a complicated fashion, we approximate it by an empirical counterpart that is defined as follows
\begin{align*}
\hat{\rho}^2(m) & = w_1 \hat{\sigma}^2(m)  + w_2  \frac{Y' (I_n - M_1(m)) Y}{n} + w_3 \frac{Y' M_1(m) Y}{n-\abs{m_1}},
\end{align*}
where the weights equal
\begin{align*}
w_1 &=  (1-a_2(m))^2 \frac{\abs{m}}{n-\abs{m}+1}  +1 -a_2(m)^2 - (a_1(m)- a_2(m))^2 \frac{\abs{m_1}}{n-\abs{m_1}+1} \\
& \phantom{= } +(a_2(m) - a_1(m))(2 - a_1(m) - a_2(m))  \frac{\abs{m_1}}{n-\abs{m_1}+1},   \\
w_2 & =  a_1(m)^2, \\
w_3 & =   a_2(m)^2 - a_1(m)^2 \frac{\abs{m_1}}{n} + (a_2(m)- a_1(m))^2 \frac{\abs{m_1}}{n-\abs{m_1}+1}.
\end{align*}
%
In Appendix \ref{sec:pe}, we have a closer look on the derivation of $\hat{\rho}^2(m)$. The following result shows that for every model $m$ the empirical mean-squared prediction error $\hat{\rho}^2(m)$ is a good approximation for the true mean-squared prediction error $\rho^2(m)$.
\begin{theorem} \label{th:help.bound}
For every fixed $m \in \mathcal{M}_n$, we have for every $\eps > 0$
\begin{align} 
\begin{split} \label{eq:help.bound}
\P & \lrr{ \lrv{ \log  \frac{\hat{\rho}^2(m)}{\rho^2(m)}   } \geq \eps } \leq 31 \abs{m} \exp \lrr{ - n \lrr{\frac{\abs{m_1}}{n}}^2 \lrr{1 - \frac{\abs{m}}{n} }^5   \frac{ \eps^2}{ 14397 (1+\eps)^2 } },
\end{split}
\end{align}
where $\rho^2(m)$ and $\hat{\rho}^2(m)$ are defined above. Alternatively, we can bound the left-hand side in the preceding inequality from above by
\begin{align} \label{eq:help.bound.m}
31 \abs{m} \exp \lrr{ - n \lrr{1 - \frac{\abs{m}}{n} }^5   \frac{\eps^2}{28279 (1+\mu(m))^2 (1+ \eps)^2 } }.
\end{align}
where $\mu(m) = \theta' \Sigma(m) \theta/\sigma^2(m)$ and where $\Sigma(m)$ is the variance-covariance matrix of $x(m)$.
\end{theorem}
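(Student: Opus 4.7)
The plan is to first derive an exact closed-form expression for $\rho^2(m)$ in terms of the training sample and unknown parameters, and then to identify $\hat\rho^2(m)$ as a bias-corrected plug-in for the resulting pieces. Since $(y^{(0)}, x^{(0)})$ is independent of $(X,Y)$ and jointly Gaussian, conditioning on $x^{(0)}(m)$ and using that $y^{(0)} \mid x^{(0)}(m) \sim N(x^{(0)}(m)'\theta, \sigma^2(m))$ gives
\begin{gather*}
\rho^2(m) \;=\; \sigma^2(m) \;+\; \E \lrs{ \lrr{ x^{(0)}(m)'(\hat\theta^{JS} - \theta) }^2 \,\Big\Vert\, X, Y}.
\end{gather*}
Rewriting $\hat\theta^{JS} - \theta$ in the orthogonalised coordinates of \eqref{eq:model.o}, the prediction error becomes $x_1^{(0)}(m)'((1-a_1(m))\hat\theta_1^* - \theta_1^*) + x_2^{*(0)}(m)'((1-a_2(m))\hat\theta_2 - \theta_2)$, where $x_2^{*(0)}(m)$ is the analogue of the orthogonalised regressor. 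The remaining expectation is a sum of three quadratic forms in $\hat\theta_1^*, \theta_1^*, \hat\theta_2, \theta_2$ weighted by the population covariances $\Sigma_{ij}(m)$.

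Next I would take conditional expectations given $X$ of these quadratic forms and identify them with the three observable quantities $\hat\sigma^2(m)$, $Y'(I_n - M_1(m))Y/n$ and $Y'M_1(m)Y/(n-|m_1|)$. A direct calculation shows that the weights $w_1, w_2, w_3$ stated in the theorem are precisely those that make $\hat\rho^2(m)$ match $\rho^2(m)$ in expectation; this is the derivation promised in Appendix~\ref{sec:pe}.

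The core of the proof is then concentration. Each building block has an exact or conditional chi-square distribution: $(n-|m|)\hat\sigma^2(m)/\sigma^2(m) \sim \chi^2_{n-|m|}$; $\hat\theta_1^{*\prime}X_1(m)'X_1(m)\hat\theta_1^*/\sigma^2(m)$ is a non-central $\chi^2_{|m_1|}$ given $X_1(m)$; $Y'M_1(m)Y/\sigma^2(m)$ is a non-central $\chi^2_{n-|m_1|}$ given $X$; and analogous statements hold for the block-$2$ quantities. Laurent--Massart tail bounds then give exponential concentration of each around its (conditional) mean with rate at least $n-|m|$. A union bound over the small finite collection of such events produces the prefactor $31|m|$, and a Taylor expansion of $\log$ near $1$ converts the resulting absolute deviations into a bound on $|\log(\hat\rho^2(m)/\rho^2(m))|$. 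The factor $(|m_1|/n)^2$ in \eqref{eq:help.bound} reflects that the shrinkage factor $a_1(m)$ is itself of order $|m_1|/n$, so deviations in the block-$1$ contribution enter $\hat\rho^2(m)$ damped by this factor; the alternative \eqref{eq:help.bound.m} trades this factor for $(1+\mu(m))^{-2}$ by instead bounding the block-$1$ contribution against $\rho^2(m) \geq \sigma^2(m)/(1+\mu(m))$.

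The main obstacle is the nonlinear dependence of $\hat\rho^2(m)$ on the shrinkage factors $a_1(m), a_2(m)$, which appear squared inside the weights and carry the positive-part truncation $\min\{1,\cdot\}$. I would handle the event where truncation is active as a separate low-probability case (since on $\{a_i(m)=1\}$ the underlying non-central chi-square has already deviated into its tail) and linearise around conditional expectations on the complement; the large numerical constants $14397$ and $28279$ in the bound arise from repeated applications of the concentration inequalities combined with triangle-inequality slack propagated through these nonlinearities.
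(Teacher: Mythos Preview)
Your high-level plan (decompose into chi-square pieces, apply exponential concentration, union bound) is correct in spirit, but you are missing the central device of the paper's proof and your treatment of the shrinkage factors would not go through as described. The paper does \emph{not} compare $\hat\rho^2(m)$ to $\rho^2(m)$ directly, nor does it match expectations. It introduces an intermediate quantity $r$ (see \eqref{eq:r}) that depends on the random shrinkage factors $a_1,a_2$ but otherwise only on $s^2,\mu,\mu_2,|m|,|m_1|,n$, and then shows separately that $\hat\rho^2(m)/r$ and $\rho^2(m)/r$ concentrate around $1$ (Lemmas~\ref{lem:bound.rhohat}--\ref{lem:bound.rho.m}). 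The point is that both ratios can be written as $1+\sum_i r_i T_i$, where the $T_i$ are mean-zero (or centred) quantities with explicit chi-square or Wishart-eigenvalue-type tails, and the random coefficients $r_i=r_i(a_1,a_2)$ admit \emph{deterministic} bounds valid for all $a_1,a_2\in[0,1]$ (Lemma~\ref{lem:prop}(i)). There is no case-split on whether the positive-part truncation is active and no linearisation; the truncation is absorbed simply by $a_i\in[0,1]$. Your proposed linearise-on-the-complement strategy would have to control the nonlinear dependence of $r_i$ on $a_1,a_2$ some other way, and you have not indicated how.

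Two of your heuristic explanations are also off. The factor $(|m_1|/n)^2$ does not come from $a_1$ being small: $a_1$ can equal $1$. It arises because some of the $T_i$ (e.g.\ a noncentral $\chi^2_{|m_1|}$ from $\theta'Z'P_1Z\theta$, and quadratic forms controlled via $\lambda_1(V_1'V_1/n)$) concentrate only at rate $|m_1|$ or $|m_1|\cdot|m_1|/n$ rather than $n$; see the third term in \eqref{eq:bound.rho.3}. Likewise, the alternative bound \eqref{eq:help.bound.m} is not obtained from a lower bound on $\rho^2(m)$ involving $1/(1+\mu(m))$ (indeed $\rho^2(m)\ge\sigma^2(m)$, with no such factor); the $(1+\mu(m))^{-2}$ appears because in Lemmas~\ref{lem:bound.rhohat.m} and~\ref{lem:bound.rho.m} the same $T_i$ are rebounded using $|r_i|\le 1/s^2$ instead of the signal-dependent bounds, which pushes the noncentrality $\mu$ into the exponent's denominator rather than costing degrees of freedom. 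Finally, your orthogonalised prediction error $x_1^{(0)\prime}(\cdot)+x_2^{*(0)\prime}(\cdot)$ is too clean: $x_2^{*(0)}$ would have to be orthogonalised with the \emph{sample} projection $M_1(m)$, so taking the population expectation over $x^{(0)}$ produces the many cross terms involving $(Z_1'Z_1)^{-1}Z_1'\tilde Z_2$ visible in \eqref{eq:pe.5}; these are exactly the terms that force the Wishart-eigenvalue arguments and drive the large constants.
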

This result shows that for any fixed model the true and the empirical mean-squared prediction error are close to each other. 
Noting that $\rho^2(m) \geq \sigma^2 > 0$, the term $\hat{\rho}^2(m)/\rho^2(m)$ is always well-defined. On the probability zero event where $\hat{\rho}^2(m)=0$, $\log ( \hat{\rho}^2(m)/\rho^2(m))$ should be interpreted as $\infty$. There are two different upper bounds in the previous result. Both upper bounds depend on known quantities like $\eps$, $n$, $\abs{m}$ and $\abs{m_1}$. The upper bound in \eqref{eq:help.bound} does not depend on unknown quantities whereas the upper bound in \eqref{eq:help.bound.m} depends on the unknown signal-to-noise ratio $\mu(m)$. For every fixed model $m$, we can estimate the signal to noise ratio $\mu(m)$ by $(Y'Y/n)/\hat{\sigma}^2(m)-1$. It should be noted that both upper bounds do not tend to zero as $\eps$ gets larger. We could present a smaller upper bound but at the expense of a more complicated and complex structure of the bound that does not clearly show the effect of the individual quantities. 

\section{Model selection}
\label{sec:selection}

In this section, we use Bonferroni's inequality to extent Theorem~\ref{th:help.bound} to hold uniformly over the whole class of candidate models $\Mn$ and we show how to use the result for model selection. We will not assume that one of the candidate models is the true model (because it is not the aim of the paper to find the true model). Rather we would like to find a `good' model for prediction out-of-sample, that is a model having a small mean-squared prediction error. Because minimizing $\rho^2(m)$ is unfeasible, we minimize the empirical mean-squared prediction error $\hat{\rho}^2(m)$ instead. Lemma~\ref{lem:bound.uniform} and the subsequent corollary motivate this approach.
\begin{lemma} \label{lem:bound.uniform}
Consider a finite and non-empty collection of candidate models $\Mn$ and let $r_n = \inf_{m \in \Mn} \abs{m_1}$ and $s_n = \sup_{m \in \Mn} \abs{m}$. Then, we have for each $\eps >0$
\begin{align}
\begin{split} \label{eq:bound.uniform} 
\P & \lrr{ \sup_{m \in \Mn} \lrv{ \log  \frac{\hat{\rho}^2(m)}{\rho^2(m)}   } \geq \eps }  \\
 & \leq
31 \abs{\Mn} s_n \exp \lrr{- n \lrr{\frac{r_n}{n}}^2 \lrr{ 1 - \frac{s_n}{n}}^5 \frac{\eps^2}{14397(1+\eps)^2} },
\end{split}
\end{align}
where $\abs{\Mn}$ denotes the number of candidate models in collection $\Mn$. The result holds uniformly over all data-generating processes as in \eqref{eq:overall.model}.
Alternatively, we can bound the left-hand side in the preceding display from above by
\begin{align} \label{eq:bound.uniform.m}
31 \abs{\Mn} s_n \exp \lrr{- n \lrr{ 1 - \frac{s_n}{n}}^5 \frac{\eps^2}{28279 (1+\eps)^2 d^2} },
\end{align}
where the result holds uniformly over all data-generating processes as in \eqref{eq:overall.model} such that $\Var(y)/\sigma^2 \leq d$ for some $d >0$.
\end{lemma}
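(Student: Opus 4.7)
The proof is a direct union-bound extension of Theorem~\ref{th:help.bound}. The plan is as follows. First, apply Bonferroni's inequality to write
\begin{align*}
\P\lrr{\sup_{m \in \Mn} \lrv{\log \frac{\hat{\rho}^2(m)}{\rho^2(m)}} \geq \eps}
\leq \sum_{m \in \Mn} \P\lrr{\lrv{\log \frac{\hat{\rho}^2(m)}{\rho^2(m)}} \geq \eps}.
\end{align*}

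For \eqref{eq:bound.uniform}, apply bound \eqref{eq:help.bound} of Theorem~\ref{th:help.bound} to each summand. Enlarging $\abs{m}$ (which appears in the prefactor and in $1-\abs{m}/n$) or shrinking $\abs{m_1}$ only enlarges the right-hand side, so replacing $\abs{m}$ by $s_n$ and $\abs{m_1}$ by $r_n$ yields a bound valid for every $m \in \Mn$ that depends only on $n$, $s_n$, $r_n$, $\eps$. Summing this uniform bound over the $\abs{\Mn}$ terms produces~\eqref{eq:bound.uniform}. Since the right-hand side of \eqref{eq:help.bound} itself does not depend on the data-generating process, neither does the resulting bound, giving the asserted uniformity.

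The argument for \eqref{eq:bound.uniform.m} is analogous but starts from \eqref{eq:help.bound.m}, which involves $\mu(m)$. The only substantive step is the uniform bound $(1+\mu(m))^2 \leq d^2$. From the variance decomposition $\Var(y) = \theta'\Sigma(m)\theta + \sigma^2(m)$ arising from representation \eqref{eq:model.m}, we have $1+\mu(m) = \Var(y)/\sigma^2(m)$. Because $u$ is independent of $x$ with $\Var(u)=\sigma^2$, conditioning the identity $y=\sum_{i \geq 1} x_i\beta_i + u$ on $x(m)$ gives $\sigma^2(m) = \Var(\sum_{i\geq 1} x_i \beta_i \mid x(m)) + \sigma^2 \geq \sigma^2$. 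Under the assumption $\Var(y)/\sigma^2 \leq d$ we therefore obtain
\begin{align*}
1 + \mu(m) \;=\; \frac{\Var(y)}{\sigma^2(m)} \;\leq\; \frac{\Var(y)}{\sigma^2} \;\leq\; d.
\end{align*}
Substituting this bound into \eqref{eq:help.bound.m}, then replacing $\abs{m}$ by $s_n$ inside both the prefactor and $(1-\abs{m}/n)^5$ as before, and summing over $\Mn$ yields \eqref{eq:bound.uniform.m}. I do not anticipate any real obstacle: Theorem~\ref{th:help.bound} carries all the technical weight, and the only non-routine ingredient is the identification of $1+\mu(m)$ with the variance ratio $\Var(y)/\sigma^2(m)$ together with the elementary monotonicity $\sigma^2(m) \geq \sigma^2$.
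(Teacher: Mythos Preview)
Your proposal is correct and follows essentially the same route as the paper's proof: Bonferroni, then Theorem~\ref{th:help.bound} termwise, then the monotone replacements $\abs{m}\to s_n$, $\abs{m_1}\to r_n$, and for the second bound the identification $1+\mu(m)=\Var(y)/\sigma^2(m)$ together with $\sigma^2(m)\geq\sigma^2$. Your justification of the last inequality via the conditional variance decomposition is slightly more explicit than the paper's, but the argument is the same.
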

We have two different types of exchangeable upper bounds. The upper bound in \eqref{eq:bound.uniform} only depends on known quantities and is exponentially small in $n$ if only $r_n/n$ is not too small and if $s_n/n$ and $\abs{\Mn}$ are not too large. So we need the number of unrestricted components in the first block to be large, more precisely, we need $r_n/n$ to be bounded away from 0, i.e., $r_n/n > \eta_1$ for some $\eta_1>0$ and for all $n \in \N$. The models can also not be too complex, i.e., $s_n/n$ should be bounded away from 1, i.e., $s_n/n < 1-\eta_2$ for some $\eta_2>0$ and for all $n \in \N$. Furthermore, we see that the number of models in collection $\Mn$ can exceed sample size (and can actually be a large multiple of sample size) but it can not be too large, e.g., complete subset selection is not possible. The upper bound in \eqref{eq:bound.uniform.m} also depends on the unknown quantity $d$ which is an upper bound on the signal-to-noise ratio of the data-generating process and is exponentially small in $n$ if only $s_n/n$, $\abs{\Mn}$ and $d$ are not too large. Note that the factor $s_n$ outside of the exponential term in both bounds is negligible.  

A simple consequence of the preceding result is that the empirically best model is a `good' model. 
For this purpose, let $\hat{m}_n^*$ and $m_n^*$ be minimizers of $\hat{\rho}^2(m)$ and of $\rho^2(m)$, respectively, i.e., 
\begin{gather*}
\hat{m}_n^* = \argmin {m \in \Mn} \hat{\rho}^2(m), \quad m_n^* = \argmin{m \in \Mn} \rho^2(m).
\end{gather*}
\begin{cor} \label{cor:bound.best}
Let $r_n = \inf_{m \in \Mn} \abs{m_1}$ and $s_n = \sup_{m \in \Mn} \abs{m}$ Then, we have for each $\eps >0$
\begin{align}
\begin{split} \label{eq:true.perf}
\P & \lrr{ \lrv{ \log  \frac{\rho^2(\hat{m}_n^*)}{\rho^2(m_n^*)}   } \geq \eps }  \leq 31 \abs{\Mn} s_n \exp \lrr{-n  \lrr{ \frac{r_n}{n}}^2 \lrr{ 1 - \frac{s_n}{n} }^5 \frac{\eps^2}{14397(2+\eps)^2} },
\end{split}
\intertext{as well as}
\begin{split} \label{eq:est.perf}
\P & \lrr{ \lrv{ \log  \frac{\hat{\rho}^2(\hat{m}_n^*)}{\rho^2(\hat{m}_n^*)}   } \geq \eps }  \leq 31 \abs{\Mn} s_n \exp \lrr{-n  \lrr{ \frac{r_n}{n}}^2 \lrr{ 1 - \frac{s_n}{n} }^5 \frac{\eps^2}{14397(1+\eps)^2} },
\end{split}
\end{align}
where $\Mn$ denotes the number of candidate models in collection $\Mn$. Both results hold uniformly over all data-generating processes as in \eqref{eq:overall.model}. Alternatively, we can bound the left-hand side in \eqref{eq:true.perf} from above by
\begin{align} \label{eq:true.perf.m}
31 \abs{\Mn} s_n \exp \lrr{-n  \lrr{1 - \frac{s_n}{n}}^5 \frac{\eps^2}{28279(2+\eps)^2d^2}   },
\intertext{and the left-hand side in \eqref{eq:est.perf} from above by}
\label{eq:est.perf.m}
31 \abs{\Mn} s_n \exp \lrr{-n  \lrr{1 - \frac{s_n}{n}}^5 \frac{\eps^2}{28279(1+\eps)^2d^2}   }.
\end{align}
Both results hold uniformly over all data-generating processes as in \eqref{eq:overall.model} such that $\Var(y)/\sigma^2 \leq d$ for some $d>0$.
\end{cor}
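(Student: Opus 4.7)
The plan is to deduce all four bounds directly from Lemma~\ref{lem:bound.uniform} by two elementary reductions: the random model $\hat{m}_n^*$ sits inside the supremum, and the closeness of $\rho^2(\hat{m}_n^*)$ to $\rho^2(m_n^*)$ follows from a triangle-type chain that loses a factor of $2$ in $\eps$.

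For \eqref{eq:est.perf} and \eqref{eq:est.perf.m} the argument is immediate. Since $\hat{m}_n^* \in \Mn$, the event in question is contained in $\lrc{ \sup_{m \in \Mn} \lrv{ \log (\hat{\rho}^2(m)/\rho^2(m)) } \geq \eps }$, and \eqref{eq:bound.uniform} respectively \eqref{eq:bound.uniform.m} of Lemma~\ref{lem:bound.uniform} bound its probability at once.

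For \eqref{eq:true.perf} I would first note that, by definition of $m_n^*$, $\rho^2(\hat{m}_n^*) \geq \rho^2(m_n^*)$, so the absolute value inside the probability may be dropped. On the event
\begin{gather*}
A_{\eps/2} = \lrc{ \sup_{m \in \Mn} \lrv{ \log \frac{\hat{\rho}^2(m)}{\rho^2(m)} } < \eps/2 },
\end{gather*}
one has the chain
\begin{gather*}
\log \rho^2(\hat{m}_n^*) < \log \hat{\rho}^2(\hat{m}_n^*) + \eps/2 \leq \log \hat{\rho}^2(m_n^*) + \eps/2 < \log \rho^2(m_n^*) + \eps,
\end{gather*}
where the outer inequalities use uniform closeness of the empirical and true mean-squared prediction error and the middle one uses the defining optimality of $\hat{m}_n^*$ for $\hat{\rho}^2$. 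This shows that the left-hand side of \eqref{eq:true.perf} is bounded by $\P(A_{\eps/2}^c)$, to which I apply Lemma~\ref{lem:bound.uniform} with $\eps/2$ in place of $\eps$. The algebraic identity $(\eps/2)^2/(1+\eps/2)^2 = \eps^2/(2+\eps)^2$ produces exactly the constant $14397(2+\eps)^2$ in the exponent of \eqref{eq:true.perf}; the same substitution in \eqref{eq:bound.uniform.m} yields \eqref{eq:true.perf.m}. There is no real obstacle beyond bookkeeping: the entire substance is already in Lemma~\ref{lem:bound.uniform}, and the only point to track is how the factor $\eps^2/(1+\eps)^2$ transforms under $\eps \mapsto \eps/2$.
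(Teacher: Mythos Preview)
Your proposal is correct and follows essentially the same route as the paper: for \eqref{eq:est.perf} and \eqref{eq:est.perf.m} both you and the paper just invoke Lemma~\ref{lem:bound.uniform} directly, and for \eqref{eq:true.perf} and \eqref{eq:true.perf.m} both arguments drop the absolute value using optimality of $m_n^*$, then chain $\rho^2(\hat{m}_n^*)\to\hat{\rho}^2(\hat{m}_n^*)\to\hat{\rho}^2(m_n^*)\to\rho^2(m_n^*)$ (the middle step using optimality of $\hat{m}_n^*$) to reduce to Lemma~\ref{lem:bound.uniform} with $\eps/2$. The only cosmetic difference is that the paper writes out the three-term log decomposition and then bounds two one-sided probabilities by Bonferroni, whereas you package the same inequalities into the single event $A_{\eps/2}$; the substance and the resulting constants are identical.
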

The results in \eqref{eq:true.perf} and \eqref{eq:true.perf.m} show that the empirically best model is asymptotically as good as the truly best model, in the sense that the true mean-squared prediction error of the truly best model lies close to the true mean-squared prediction error of the empirically best model. The results in \eqref{eq:est.perf} and \eqref{eq:est.perf.m} show that the empirical mean-squared prediction error of the empirically best model lies close to its true mean-squared prediction error. This implies that we can estimate the true performance of the empirically best model just by plugging it into the empirical mean-squared prediction error.

\section{Statistical inference}
\label{sec:inference}

In this section, we construct prediction intervals and we show that these intervals have the desired properties of being asymptotically `valid' and asymptotically `short'.

For a fixed model $m$, the prediction error equals $y^{(0)} - \hat{y}^{(0)}(m)$. Conditional on the training sample this prediction error follows a centered normal distribution with variance $\rho^2(m)$. We will denote this distribution by $\mathbb{L}(m)$, i.e., $\mathbb{L}(m) \equiv N(0, \rho^2(m))$. Using this distribution to construct prediction intervals for $y^{(0)}$ is infeasible because it depends on unknown quantities via its variance $\rho^2(m)$. Let $\widehat{\mathbb{L}}(m) \equiv N(0, \hat{\rho}^2(m))$ be an approximation to the true distribution and use this distribution to construct the prediction interval. The next result shows that $\mathbb{L}(m)$ is close to $\widehat{\mathbb{L}}(m)$ in the sense that their total variation distance is small with high probability.

\begin{theorem} \label{th:bound.tv}
For a fixed model $m \in \mathcal{M}_n$, we have for all $\eps > 0$
\begin{align} \label{eq:bound.tv}
\begin{split}
\P &\lrr{ \Vert \mathbb{L}(m) - \widehat{\mathbb{L}}(m) \Vert_{TV}  \geq \eps} \\
& \leq  31 \abs{m} \exp \lrr{- n \lrr{\frac{\abs{m_1}}{n}}^2 \lrr{1-\frac{\lrv{m}}{n}}^5 \frac{\eps^2}{900(1+4 \eps)^2} }.
\end{split}
\intertext{Alternatively, we can bound the left-hand side in the preceding display from above by}
\label{eq:bound.tv.m}
& 31 \abs{m} \exp \lrr{- n \lrr{1-\frac{\lrv{m}}{n}}^5 \frac{ \eps^2}{1768(1+4 \eps)^2 (1+\mu(m))^2} }.
\end{align}
\end{theorem}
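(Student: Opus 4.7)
The plan is to reduce the total variation distance between the two Gaussian laws to a bound on the log variance ratio, and then invoke Theorem~\ref{th:help.bound} with $\eps$ rescaled by a factor of $4$. Because $\mathbb{L}(m) = N(0, \rho^2(m))$ and $\widehat{\mathbb{L}}(m) = N(0, \hat\rho^2(m))$ are both centered normal, the total variation distance between them is a deterministic function of the variance ratio $r = \hat\rho^2(m)/\rho^2(m)$ alone. Solving the density crossover equation and integrating the larger density minus the smaller one yields the closed form
\[
\|\mathbb{L}(m) - \widehat{\mathbb{L}}(m)\|_{TV} = 2\lrs{\Phi\lrr{\sqrt{\tfrac{r\log r}{r-1}}} - \Phi\lrr{\sqrt{\tfrac{\log r}{r-1}}}}
\]
for $r \geq 1$, with the case $r \leq 1$ handled symmetrically.

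The key analytic step is the deterministic inequality $\|N(0, \sigma_1^2) - N(0, \sigma_2^2)\|_{TV} \leq \tfrac{1}{4}\lrv{\log(\sigma_1^2/\sigma_2^2)}$, valid for all $\sigma_1^2, \sigma_2^2 > 0$. Writing $\delta = |\log r|$, one combines $\Phi(a) - \Phi(b) \leq \phi(\xi)(a - b)$ with the fact that $\max_{x \geq 0} x \phi(x) = 1/\sqrt{2\pi e}$ is attained at $x = 1$ to control the small-$\delta$ regime; the trivial bound $\|\cdot\|_{TV} \leq 1 \leq \delta/4$ covers $\delta \geq 4$; and the intermediate regime follows by a direct calculus/monotonicity argument on the explicit expression above. (The limiting rate as $\delta \to 0$ is $\phi(1) = 1/\sqrt{2\pi e} \approx 0.242$, so there is genuine slack in the constant $1/4$.) This deterministic Gaussian TV bound is the only nontrivial piece of work; once in hand, everything else is immediate.

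Combining these ingredients, the event $\lrc{\|\mathbb{L}(m) - \widehat{\mathbb{L}}(m)\|_{TV} \geq \eps}$ is contained in $\lrc{\lrv{\log(\hat\rho^2(m)/\rho^2(m))} \geq 4\eps}$. Applying Theorem~\ref{th:help.bound} with $\eps$ replaced by $4\eps$ gives the upper bound
\[
31\abs{m}\exp\lrr{-n\lrr{\tfrac{|m_1|}{n}}^2 \lrr{1-\tfrac{|m|}{n}}^5 \frac{16\eps^2}{14397(1+4\eps)^2}},
\]
which yields \eqref{eq:bound.tv} since $16 \cdot 900 = 14400 \geq 14397$, so $16/14397 \geq 1/900$. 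The alternative bound \eqref{eq:bound.tv.m} follows identically from \eqref{eq:help.bound.m}, using $16 \cdot 1768 = 28288 \geq 28279$; the factor $(1+\mu(m))^2$ carries through the rescaling unchanged because it does not interact with $\eps$.
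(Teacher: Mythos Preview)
Your proof is correct and follows essentially the same approach as the paper: reduce the TV distance to a bound on $|\log(\hat\rho^2(m)/\rho^2(m))|$ via the deterministic inequality $\|N(0,\sigma_1^2)-N(0,\sigma_2^2)\|_{TV}\le |\log(\sigma_1^2/\sigma_2^2)|/\sqrt{2\pi e}\le |\log(\sigma_1^2/\sigma_2^2)|/4$, then invoke Theorem~\ref{th:help.bound} with $4\eps$ in place of $\eps$. The paper cites this Gaussian TV inequality from Lemma~D.1 of \cite{leeb09} rather than sketching it, and your explicit constant checks ($16\cdot 900\ge 14397$, $16\cdot 1768\ge 28279$) make the final step transparent.
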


This result together with Bonferroni's inequality gives a uniform result over the whole class of candidate models. 

\begin{cor} \label{cor:bound.tv.uniform}
Let $r_n = \inf_{m \in \Mn} \abs{m_1}$ and $s_n = \sup_{m \in \Mn} \abs{m}$. Then, we have for each $\eps >0$
\begin{align} \label{eq:bound.tv.uniform}
\begin{split}
\P &\lrr{ \sup_{m \in \Mn} \Vert \mathbb{L}(m) - \widehat{\mathbb{L}}(m) \Vert_{TV}  \geq \eps} \\
 & \leq  31 \abs{\mathcal{M}_n} s_n \exp \lrr{- n \lrr{\frac{r_n}{n}}^2 \lrr{1-\frac{s_n}{n}}^5 \frac{\eps^2}{900(1+4 \eps)^2} }.
\end{split}
\intertext{The result holds uniformly over all data generating processes as in \eqref{eq:overall.model}. Alternatively, we can bound the left-hand side in the preceding display from above by}
\label{eq:bound.tv.m}
& \phantom{\leq e}31 \abs{\mathcal{M}_n} s_n \exp \lrr{- n \lrr{1-\frac{s_n}{n}}^5 \frac{ \eps^2}{1768(1+4 \eps)^2 d^2} }.
\end{align}
The result holds uniformly over all data generating processes as in \eqref{eq:overall.model} such that $\Var(y)/\sigma^2 \leq d$ for some $d >0$.
\end{cor}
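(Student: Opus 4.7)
The plan is to apply Bonferroni's inequality exactly as in the derivation of Lemma \ref{lem:bound.uniform} from Theorem \ref{th:help.bound}, but now starting from the pointwise total-variation bound in Theorem \ref{th:bound.tv}. That is, I first note that
\[
\P\!\lrr{ \sup_{m \in \Mn} \Vert \mathbb{L}(m) - \widehat{\mathbb{L}}(m) \Vert_{TV}  \geq \eps}
\;\leq\;
\sum_{m \in \Mn} \P\!\lrr{ \Vert \mathbb{L}(m) - \widehat{\mathbb{L}}(m) \Vert_{TV}  \geq \eps},
\]
and each summand is controlled by Theorem \ref{th:bound.tv}.

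Next I would replace the model-specific quantities $|m|$, $|m_1|$ and $(1 - |m|/n)$ in the pointwise bound by their uniform counterparts $s_n$, $r_n$ and $(1-s_n/n)$, respectively. Since the right-hand side of \eqref{eq:bound.tv} is increasing in $|m|$ (through the linear prefactor) and decreasing in $|m_1|$ and in $(1-|m|/n)$ inside the exponential, each such replacement weakens the inequality and hence is legitimate. Summing $|\Mn|$ identical bounds gives the extra factor $|\Mn|$ and produces \eqref{eq:bound.tv.uniform}, with the universality in the data-generating process coming from the fact that neither $r_n$, $s_n$, $|\Mn|$ nor the constants involve $\beta$, $\Sigma$ or $\sigma^2$.

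For the alternative bound \eqref{eq:bound.tv.m}, I would start from \eqref{eq:bound.tv.m} in Theorem \ref{th:bound.tv} and apply the same monotonicity reductions as above, and then absorb the unknown signal-to-noise ratio $\mu(m)$ into the constant $d$. The needed inequality is
\[
1 + \mu(m) \;=\; 1 + \frac{\theta'\Sigma(m)\theta}{\sigma^2(m)} \;=\; \frac{\Var(y)}{\sigma^2(m)} \;\leq\; \frac{\Var(y)}{\sigma^2} \;\leq\; d,
\]
using that $\sigma^2(m)$, being the residual variance of $y$ given $x(m)$, is bounded below by the full-model error variance $\sigma^2$. Plugging this into the exponent and summing over $m \in \Mn$ yields \eqref{eq:bound.tv.m}, uniformly over all data-generating processes in \eqref{eq:overall.model} with $\Var(y)/\sigma^2 \le d$.

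There is essentially no hard step here; the argument is mechanical once Theorem \ref{th:bound.tv} is in hand. The only point that requires any care is verifying the directions of monotonicity when passing from the $m$-dependent bound to the $(r_n,s_n)$-bound so that the replacements actually enlarge the right-hand side, and checking the elementary identity $1+\mu(m)=\Var(y)/\sigma^2(m)$ used to eliminate $\mu(m)$ in favor of $d$.
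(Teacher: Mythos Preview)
Your proposal is correct and amounts to the same argument as the paper's, just composed in a different order: the paper first applies the deterministic inequality $\Vert \widehat{\L}(m) - \L(m) \Vert_{TV} \leq \tfrac{1}{4}\lrv{\log(\hat\rho^2(m)/\rho^2(m))}$ inside the supremum and then invokes Lemma~\ref{lem:bound.uniform} with $4\eps$ in place of $\eps$, whereas you apply Bonferroni first and then the pointwise Theorem~\ref{th:bound.tv}. The monotonicity checks and the identity $1+\mu(m)=\Var(y)/\sigma^2(m)\le d$ are exactly those used in the proof of Lemma~\ref{lem:bound.uniform}, so nothing new is required.
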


Because $y^{(0)} - \hat{y}^{(0)}(m)$ is distributed as $\mathbb{L}(m)$, we see that an infeasible prediction interval for $y^{(0)}$ with coverage probability $1 - \alpha$ for some $\alpha \in (0,1)$ is given by 
\begin{align*} 
\hat{y}^{(0)}(m) \pm Q_{1-\alpha/2} \rho(m),
\end{align*}
where $Q_{1-\alpha/2}$ is the $1-\alpha/2$ quantile of the standard normal distribution. The length of this `prediction interval' equals $2 Q_{1-\alpha/2} \rho(m)$ and is minimal for $m_n^*$ with $2 Q_{1-\alpha/2} \rho(m_n^*)$. Using $\hat{\rho}(m)$ instead of $\rho(m)$ in the previous display, we define the prediction interval as
\begin{align*}
\mathcal{I}(m): \hat{y}^{(0)}( m ) \pm Q_{1-\alpha/2} \hat{\rho}( m).
\end{align*}
The length of this interval is minimized for $\hat{m}_n^*$. The next results shows that the coverage probability of $\mathcal{I}(\hat{m}_n^*)$, conditional on the training sample, is close to the nominal one, except on an event that has probability converging to zero as $n$ increases under the same conditions we had in the previous section.

\begin{cor} \label{cor:pi.valid}
For every $\eps > 0$, we have that 
\begin{gather} \label{eq:pi.valid}
\lrv{(1- \alpha) - \P (y^{(0)} \in \mathcal{I}(\hat{m}_n^*)) } \leq \eps
\end{gather}
except on an event whose probability is not larger than
\begin{align*}
& 31 \abs{\mathcal{M}_n} s_n \exp \lrr{- n \lrr{\frac{r_n}{n}}^2 \lrr{1-\frac{s_n}{n}}^5 \frac{\eps^2}{900(1+4 \eps)^2} }
 \intertext{uniformly over all data generating processes as in \eqref{eq:overall.model}. Alternatively, we can bound the probability of the exception event from above by}
 & 31 \abs{\mathcal{M}_n} s_n \exp \lrr{- n \lrr{1-\frac{s_n}{n}}^5 \frac{ \eps^2}{1768(1+4 \eps)^2 d^2} }
\end{align*}
uniformly over all data generating processes as in \eqref{eq:overall.model} such that $\Var(y)/\sigma^2 \leq d$ for some $d>0$.
\end{cor}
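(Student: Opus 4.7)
The plan is to reduce the assertion to a direct application of Corollary~\ref{cor:bound.tv.uniform} using only the defining property of total variation distance; no selection-adjustment argument beyond this is needed. First I would condition on the training sample $(X,Y)$. Once conditioned, the empirically selected index $\hat{m}_n^*$, the point predictor $\hat{y}^{(0)}(\hat{m}_n^*)$ and the plug-in standard deviation $\hat{\rho}(\hat{m}_n^*)$ are deterministic functions of $(X,Y)$, while the only remaining randomness in the event $\{y^{(0)} \in \mathcal{I}(\hat{m}_n^*)\}$ comes from $(y^{(0)}, x^{(0)})$. Joint Gaussianity of $(y,x)$, together with the discussion preceding Theorem~\ref{th:bound.tv}, then gives that conditional on $(X,Y)$ the prediction error $y^{(0)} - \hat{y}^{(0)}(\hat{m}_n^*)$ is distributed as $\mathbb{L}(\hat{m}_n^*) = N(0, \rho^2(\hat{m}_n^*))$.

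Second, set $A = [-Q_{1-\alpha/2}\hat{\rho}(\hat{m}_n^*),\, Q_{1-\alpha/2}\hat{\rho}(\hat{m}_n^*)]$ for the symmetric interval that defines $\mathcal{I}(\hat{m}_n^*)$ after centering at $\hat{y}^{(0)}(\hat{m}_n^*)$. The conditional coverage equals $\mathbb{L}(\hat{m}_n^*)(A)$, whereas by construction of the normal quantile one has $\widehat{\mathbb{L}}(\hat{m}_n^*)(A) = 1-\alpha$ exactly. The standard inequality $\lrv{P(A) - Q(A)} \leq \Vert P - Q \Vert_{TV}$, applied with $P = \mathbb{L}(\hat{m}_n^*)$ and $Q = \widehat{\mathbb{L}}(\hat{m}_n^*)$, yields the deterministic pointwise bound
\begin{equation*}
\lrv{(1-\alpha) - \P(y^{(0)} \in \mathcal{I}(\hat{m}_n^*))} \;\leq\; \Vert \mathbb{L}(\hat{m}_n^*) - \widehat{\mathbb{L}}(\hat{m}_n^*) \Vert_{TV} \;\leq\; \sup_{m \in \Mn} \Vert \mathbb{L}(m) - \widehat{\mathbb{L}}(m) \Vert_{TV},
\end{equation*}
where the second inequality uses only that $\hat{m}_n^* \in \Mn$.

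Finally, I would invoke Corollary~\ref{cor:bound.tv.uniform} to conclude that the event on which the last supremum exceeds $\eps$ has probability bounded by either of the two displayed expressions in that corollary, which are precisely the two exception-event bounds claimed here. There is no real obstacle: the passage from a fixed model $m$ to the data-dependent index $\hat{m}_n^*$ is absorbed cleanly into the uniform total variation bound, and no hypothesis on the selection rule beyond $\hat{m}_n^* \in \Mn$ is used. The only minor sanity check is that $\hat{\rho}(\hat{m}_n^*) > 0$ almost surely so that $\widehat{\mathbb{L}}(\hat{m}_n^*)$ is a non-degenerate Gaussian and the set $A$ is well defined, which follows from $\hat{\sigma}^2(m)>0$ almost surely in model \eqref{eq:model.m} together with the structural form of $\hat{\rho}^2(m)$.
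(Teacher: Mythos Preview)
Your proposal is correct and follows essentially the same route as the paper: rewrite the coverage discrepancy as $|\widehat{\mathbb{L}}(\hat{m}_n^*,A)-\mathbb{L}(\hat{m}_n^*,A)|$ for the centered interval $A$, bound it by the total variation distance, and then invoke Corollary~\ref{cor:bound.tv.uniform} via the supremum over $m\in\Mn$. The only difference is that you spell out the conditioning on $(X,Y)$ and the a.s.\ positivity of $\hat{\rho}(\hat{m}_n^*)$ more explicitly than the paper does.
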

The next result shows that the minimal length of the infeasbile prediction interval lies close to the length of $\mathcal{I}(\hat{m}_n^*)$.
\begin{cor} \label{cor:pi.short}
For each $\eps > 0$, we have
\begin{align} \label{eq:pi.short}
\begin{split}
\P \lrr{ \lrv{ \log \frac{\hat{\rho}(\hat{m}_n^*)}{ \rho( m_n^*) } } \geq \eps } & \leq 31 \abs{\mathcal{M}_n} s_n \exp \lrr{- n \lrr{\frac{r_n}{n}}^2 \lrr{1-\frac{s_n}{n}}^5 \frac{\eps^2}{3600(1+2 \eps)^2} }
\end{split}
\intertext{uniformly over all data generating processes as in \eqref{eq:overall.model}. Alternatively, we can bound the left-hand side in \eqref{eq:pi.short} from above by}
\label{eq:pi.short.m}
 & \phantom{\leq } 31 \abs{\mathcal{M}_n} s_n \exp \lrr{- n \lrr{1-\frac{s_n}{n}}^5 \frac{ \eps^2}{7070(1+ 2 \eps)^2 d^2} }
\end{align}
uniformly over all data generating processes as in \eqref{eq:overall.model} such that $\Var(y)/\sigma^2 \leq d$ for some $d>0$.
\end{cor}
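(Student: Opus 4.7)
The plan is to reduce Corollary~\ref{cor:pi.short} to the uniform approximation result from Lemma~\ref{lem:bound.uniform} by exploiting the optimality characterizations of $\hat m_n^*$ and $m_n^*$. The key observation is that on the event where $\sup_{m\in\Mn}\lrv{\log(\hat\rho^2(m)/\rho^2(m))}\leq \eps'$---which has high probability by Lemma~\ref{lem:bound.uniform}---we can sandwich $\hat\rho^2(\hat m_n^*)$ between $e^{-\eps'}\rho^2(m_n^*)$ and $e^{\eps'}\rho^2(m_n^*)$, so that $\lrv{\log(\hat\rho(\hat m_n^*)/\rho(m_n^*))}\leq \eps'/2$.

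Concretely, assume we are on the good event. By the optimality of $\hat m_n^*$ as the minimizer of $\hat\rho^2$, followed by Lemma~\ref{lem:bound.uniform} applied at the model $m_n^*$,
\[
\hat\rho^2(\hat m_n^*) \;\leq\; \hat\rho^2(m_n^*) \;\leq\; e^{\eps'}\rho^2(m_n^*).
\]
Conversely, applying Lemma~\ref{lem:bound.uniform} at the data-dependent model $\hat m_n^*$ and then using that $m_n^*$ minimizes $\rho^2$,
\[
\hat\rho^2(\hat m_n^*) \;\geq\; e^{-\eps'}\rho^2(\hat m_n^*) \;\geq\; e^{-\eps'}\rho^2(m_n^*).
\]
Taking square roots and logarithms in the resulting chain $e^{-\eps'}\rho^2(m_n^*)\leq \hat\rho^2(\hat m_n^*)\leq e^{\eps'}\rho^2(m_n^*)$ yields $\lrv{\log(\hat\rho(\hat m_n^*)/\rho(m_n^*))}\leq \eps'/2$.

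To derive \eqref{eq:pi.short}, set $\eps'=2\eps$; then the event $\{\lrv{\log(\hat\rho(\hat m_n^*)/\rho(m_n^*))}\geq \eps\}$ is contained in $\{\sup_{m\in\Mn}\lrv{\log(\hat\rho^2(m)/\rho^2(m))}\geq 2\eps\}$, and \eqref{eq:bound.uniform} at threshold $2\eps$ produces an exponent $(2\eps)^2/[14397(1+2\eps)^2]=\eps^2/[3599.25(1+2\eps)^2]$, which is at most $\eps^2/[3600(1+2\eps)^2]$ as claimed. The alternative bound \eqref{eq:pi.short.m} follows by the same argument using \eqref{eq:bound.uniform.m} in place of \eqref{eq:bound.uniform}, with $28279/4=7069.75\leq 7070$ yielding the constant and the signal-to-noise factor $d^2$ carrying over.

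The only real content is the sandwich in the second paragraph: it requires Lemma~\ref{lem:bound.uniform} to hold at the data-dependent model $\hat m_n^*$, which is exactly why the uniform version is invoked rather than the fixed-$m$ Theorem~\ref{th:help.bound}. Beyond that, the argument is routine bookkeeping, with the only subtlety being the factor of $4$ lost by passing from $\eps'$ to $\eps'/2$, which is absorbed into the constant $3600$ (respectively $7070$) in the exponent.
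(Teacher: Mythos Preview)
Your proof is correct and follows essentially the same route as the paper's: both exploit the two optimality inequalities $\hat\rho^2(\hat m_n^*)\leq\hat\rho^2(m_n^*)$ and $\rho^2(m_n^*)\leq\rho^2(\hat m_n^*)$ to sandwich $\log(\hat\rho^2(\hat m_n^*)/\rho^2(m_n^*))$ between $\log(\hat\rho^2(\hat m_n^*)/\rho^2(\hat m_n^*))$ and $\log(\hat\rho^2(m_n^*)/\rho^2(m_n^*))$, and then invoke the uniform bound of Lemma~\ref{lem:bound.uniform} at threshold $2\eps$. One wording slip: you write that $\eps^2/[3599.25(1+2\eps)^2]$ is ``at most'' $\eps^2/[3600(1+2\eps)^2]$, but the inequality runs the other way; this is harmless, since a larger exponent inside $\exp(-n\cdot\,\cdots)$ only makes the probability bound smaller, so the stated bound with constant $3600$ (resp.\ $7070$) is still valid.
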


\section{Conclusion}
\label{sec:conclusion}

We have shown how to select a model that performs well for prediction out-of-sample and how to use this model to construct prediction-intervals. We measured the performance of a model by its mean-squared prediction error conditional on the training data $(Y,X)$ when repeatedly predicting over future observations, i.e., in our analysis we kept the training data fixed and averaged over future observations. Fitting the models using least-squares estimators was done in \cite{leeb08} and \cite{leeb09}  and fitting them using usual James--Stein-type shrinkage estimators was done in \cite{huber13}. In this work, we have considered a larger class of estimators namely blocked James-Stein-type shrinkage estimators. The models can be very complex in the sense that the number of parameters can grow with sample size but can never exceed sample size, and the collection of models can be huge, it can be much larger than sample size.

Because the true out-of-sample prediction error $\rho^2(m)$ is not known, we minimize its empirical counterpart $\hat{\rho}^2(m)$. First of all, we have shown that those two quantities lie close to each other in the sense that $\P( \abs{ \log (\hat{\rho}^2(m)/\rho^2(m)) } \geq \eps )$ is bounded from above by a bound that is exponentially small in $n$ under some restrictions (see \eqref{eq:help.bound} and \eqref{eq:help.bound.m} and the discussion following that result). We have shown that the true performance of the model $\hat{m}^*_n$ that minimizes $\hat{\rho}^2(m)$ lies close to the minimal true performance (see \eqref{eq:true.perf} and \eqref{eq:true.perf.m}) and that we can use its empirical mean-squared prediction error to estimate its true performance (see \eqref{eq:est.perf} and \eqref{eq:est.perf.m}).
Designing prediction intervals, we have used again the empirical counterpart $\hat{\rho}^2(m)$ instead of $\rho^2(m)$. We have shown that the interval $\mathcal{I}(\hat{m}^*_n)$ has actual coverage probability that lies close to the nominal one except on an event that has a probability converging to 0 exponentially fast in $n$ (see Corollary~\ref{cor:pi.valid}). Furthermore, the length of this prediction interval is short in the sense that it is close to the length of an infeasible prediction interval that depends on the unknown out-of-sample prediction error $\rho^2(m)$ (see \eqref{eq:pi.short} and \eqref{eq:pi.short.m}).
It should be noted that all our results are finite sample results that hold for every sample size $n$, and that hold uniformly over a large class of data-generating processes.

\newpage
\begin{appendix}

\section{Technical details for deriving $\hat{\rho}^2(m)$}
\label{sec:pe}

To shorten the notation in the appendix, we drop the dependence on $m$ in the notation and we keep in mind that we always consider a fixed model $m$. Thus, let $X(m)=Z$, $X_1(m)=Z_1$, $X_2(m)=Z_2$, $\sigma^2(m)=s^2$ and $\Sigma(m)=\E[x(m)x(m)']= S$. The model in \eqref{eq:model.m} then becomes
\begin{gather} \label{eq:model.z}
Y=Z \theta + w,
\end{gather}
where $w \sim N(0, s^2 I_n)$ is independent of $Z$. Because $Z$ is divided into two blocks, we also rewrite $\theta$ as $\theta=(\theta_1', \theta_2')'$ with $\theta_1$ and $\theta_2$ being a $\abs{m_1}$-vector and a $\abs{m_2}$-vector, respectively, and $S$ as 
\begin{gather*}
S = 
\begin{pmatrix}
S_{1,1} & S_{1,2}\\
S_{2,1} & S_{2,2}
\end{pmatrix},
\end{gather*}
where $S_{1,1}$ is $\abs{m_1} \times \abs{m_1}$, $S_{1,2}$ is $\abs{m_1} \times \abs{m_2}$ and $S_{2,2}$ is $\abs{m_2} \times \abs{m_2}$. Note that $S_{2,1} = S_{1,2}'$ because $S$ is symmetric. For motivating the formula for $\hat{\rho}^2(m)$, note that the true mean-squared prediction error equals
\begin{gather*}
\rho^2(m)  = (\hat{\theta}^{BJS} - \theta )'S(\hat{\theta}^{BJS} - \theta ) + s^2,
\end{gather*}
where $\hat{\theta}^{BJS}$ is the blocked James--Stein-estimator for $\theta$ as defined in Section \ref{sec:framework}, i.e., 
\begin{gather*}
\hat{\theta}^{BJS} = 
\begin{pmatrix}
\hat{\theta}_1^{JS} \\ \hat{\theta}_2^{JS}
\end{pmatrix} =
\begin{pmatrix}
\hat{\theta}_1^{*JS} - (Z_1'Z_1)^{-1}Z_1'Z_2 \hat{\theta}_2^{JS} \\ \hat{\theta}_2^{JS}
\end{pmatrix},
\end{gather*}
where $\hat{\theta}_1^{*JS} = (1 - a_1 ) \hat{\theta}_1^{*}$ and $\hat{\theta}_2^{JS} = (1-a_2) \hat{\theta}_2$ with\footnote{Umbenennen $a_1 = a_n^{(1)}$ und $a_2 = a_n^{(2)}$}
\begin{align} \label{eq:a1a2}
a_1 & =  \min \lrc{ c_1 \hat{s}^2 \frac{\abs{m_1}}{\hat{\theta}_1^{*}{'}Z_1'Z_1\hat{\theta}_1^* }, 1 }, \; a_2  = \min \lrc{ c_2 \hat{s}^2 \frac{\abs{m_2}}{\hat{\theta}_2'Z_2'M_1Z_2 \hat{\theta}_2}, 1},
\end{align}
where $\hat{s}^2$ is the usual unbiased variance estimator for $s^2$ in model \eqref{eq:model.z} (of course $\hat{s}^2=\hat{\sigma}^2(m)$, $a_1=a_1(m)$ and $a_2=a_2(m)$ as in Section \ref{sec:framework}), where $\hat{\theta}_1=(Z_1'Z_1)^{-1} Z_1'Y$ and $\hat{\theta}_2=(Z_2'M_1 Z_2)^{-1} Z_2'M_1 Y$ with $M_1=I_n - Z_1(Z_1'Z_1)^{-1}Z_1'$. Let $\hat{\theta} = (\hat{\theta}_1', \hat{\theta}_2')'$ be the least-squares estimator in model \eqref{eq:model.z} and note that $\hat{\theta}_1=\hat{\theta}_1^* - (Z_1'Z_1)^{-1} Z_1'Z_2 \hat{\theta}_2$.
Rewriting the James--Stein-type shrinkage estimator for $\theta_1$ as
\begin{align*}
\hat{\theta}_1^{JS} & =  \hat{\theta}_1^* - a_1 \hat{\theta}_1^* - (Z_1'Z_1)^{-1} Z_1'Z_2 \hat{\theta}_2 + a_2 (Z_1'Z_1)^{-1} Z_1'Z_2  \hat{\theta}_2 \\
& = \hat{\theta}_1^* -  (Z_1'Z_1)^{-1} Z_1'Z_2  \hat{\theta}_2   -  a_2 \lrs{ \hat{\theta}_1^* - (Z_1'Z_1)^{-1} Z_1'Z_2  \hat{\theta}_2 }  - a_1 \hat{\theta}_1^*  + a_2 \hat{\theta}_1^* \\
& = (1-a_2)\hat{\theta}_1 + (a_2-a_1) \hat{\theta}_1^*
\end{align*}
we see that the blocked James--Stein-type shrinkage estimator for $\theta$ can be rewritten as
\begin{align*}
\hat{\theta}^{BJS} & =  (1-a_2)  \hat{\theta}
+  (a_2 - a_1) 
\begin{pmatrix}
 \hat{\theta}_1^* \\ \mathbf{0}
\end{pmatrix},
\end{align*}
where $\mathbf{0}$ denotes a $\abs{m_2}$-vector of zeros. Note that the first term of this estimator has the same structure as the usual James--Stein estimator with shrinkage factor $a_2$ (see the last display on page 33 in  \cite{huber13}). Using this, we can rewrite the true mean-squared prediction error as
\begin{align} \begin{split} \label{eq:pe.3}
\rho^2(m) & = ( (1-a_2) \hat{\theta} - \theta)' S ( (1-a_2) \hat{\theta} - \theta ) + (a_2 - a_1)^2 \hat{\theta}_1^*{'} S \hat{\theta}_1^* + s^2 \\
& \phantom{=} + 2 (a_2 - a_1) \hat{\theta}_1^*{'} [S_{1,1} ( (1-a_2) \hat{\theta}_1 - \theta_1) + S_{1,2} ( (1-a_2) \hat{\theta}_2 - \theta_2)].
\end{split}
\end{align}
We can rewrite the sum of the terms in the first line in \eqref{eq:pe.3} as
\begin{align}
\begin{split} \label{eq:pe.31}
& (1-a_2)^2(\hat{\theta}-\theta)'S(\hat{\theta} - \theta) + 2a_2(a_2-1)(\hat{\theta}-\theta)'S\theta + a_2^2 \theta'S\theta + s^2\\
& + (a_2-a_1)^2 (\hat{\theta}_1^{*} - \theta_1^*)' S_{1,1} (\hat{\theta}_1^* - \theta_1^*)  + 2 (a_2-a_1)^2 (\hat{\theta}_1^{*} - \theta_1^*)' S_{1,1} \theta_1^* + (a_2-a_1)^2 \theta_1^*{'} S_{1,1} \theta_1^*.
\end{split}
\end{align}
We can rewrite the sum of the terms in the second line in \eqref{eq:pe.3} as
\begin{align*}
2&(1-a_2)(a_2-a_1)(\hat{\theta}_1^*-\theta_1^*)'S_{1,1}(\hat{\theta}_1-\theta_1)  \\
& + 2(1-a_2)(a_2-a_1)(\hat{\theta}_1^*-\theta_1^*)'S_{1,2}(\hat{\theta}_2-\theta_2)  \\
&+2(1-a_2)(a_2-a_1)\theta_1^*{'}S_{1,1}(\hat{\theta}_1-\theta_1) +2(1-a_2)(a_2-a_1)\theta_1^*{'}S_{1,2}(\hat{\theta}_2-\theta_2)\\
&-2a_2(a_2-a_1)(\hat{\theta}_1^*-\theta_1^*)'(S_{1,1} \theta_1 +S_{1,2} \theta_2 )\\
&  -2a_2(a_2-a_1) \theta_1^*{'}(S_{1,1} \theta_1+S_{1,2} \theta_2).
\end{align*}
Using that $\hat{\theta}_1 - \theta_1 = \hat{\theta}_1^* - \theta_1^* - (Z_1'Z_1)^{-1} Z_1'Z_2 (\hat{\theta}_2 - \theta_2)$, we can rewrite the sum in the preceding display as
\begin{align}
\begin{split} \label{eq:pe.32}
2&(1-a_2)(a_2-a_1)(\hat{\theta}_1^*-\theta_1^*)'S_{1,1}(\hat{\theta}_1^*-\theta_1^*) \\
& +2(1-a_2)(a_2-a_1)(\hat{\theta}_1^*-\theta_1^*)'[S_{1,2} - S_{1,1}(Z_1'Z_1)^{-1}Z_1'Z_2](\hat{\theta}_2-\theta_2) \\
& +2(1-a_2)(a_2-a_1) (\hat{\theta}_1^*-\theta_1^*)'S_{1,1} \theta_1^* \\
& +  2(1-a_2)(a_2-a_1) \theta_1^*{'}[S_{1,2} - S_{1,1}(Z_1'Z_1)^{-1}Z_1'Z_2 ](\hat{\theta}_2-\theta_2) \\
&-2a_2(a_2-a_1)(\hat{\theta}_1^*-\theta_1^*)'(S_{1,1} \theta_1 +S_{1,2} \theta_2 )\\
&  -2a_2(a_2-a_1) \theta_1^*{'}(S_{1,1} \theta_1+S_{1,2} \theta_2).
\end{split}
\end{align}
Let $\tilde{Z}_2= Z_2 - Z_1 S_{1,1}^{-1} S_{1,2}$ and note that the quantity in squared brackets in \eqref{eq:pe.32} equals $-S_{1,1}(Z_1'Z_1)^{-1}Z_1'\tilde{Z}_2$. Using this and collecting the terms in \eqref{eq:pe.31} and \eqref{eq:pe.32}, we can rewrite the true mean-squared prediction error as
\begin{align}
\begin{split} \label{eq:pe.4}
\rho^2(m) & = (1-a_2)^2(\hat{\theta}-\theta)'S(\hat{\theta} - \theta)  + s^2\\
& \phantom{=} + (a_2 - a_1)(2 - a_1-a_2)(\hat{\theta}_1^* - \theta_1^*)'S_{1,1} (\hat{\theta}_1^* - \theta_1^*)   \\
& \phantom{=} +  a_2^2  \theta'S\theta + 2a_2(a_1-a_2) \theta_1^*{'}(S_{1,1} \theta_1+S_{1,2} \theta_2) +(a_2-a_1)^2 \theta_1^*{'} S_{1,1} \theta_1^* \\
& \phantom{=} + 2a_2(a_2-1)(\hat{\theta}-\theta)'S\theta \\
& \phantom{=} + 2 (1-a_1)(a_2-a_1) (\hat{\theta}_1^{*} - \theta_1^*)' S_{1,1} \theta_1^* \\ 
& \phantom{=} + 2 a_2(a_1 - a_2) (\hat{\theta}_1^* - \theta_1^*)'(S_{1,1} \theta_1 + S_{1,2} \theta_2 )\\
& \phantom{=} + 2(1-a_2)(a_1-a_2) (\hat{\theta}_2-\theta_2)' \tilde{Z}_2'Z_1 (Z_1'Z_1)^{-1} S_{1,1}\theta_1^*\\
& \phantom{=} + 2(1-a_2)(a_1 - a_2) (\hat{\theta}_1^* - \theta_1^*)'S_{1,1} (Z_1'Z_1)^{-1} Z_1' \tilde{Z}_2 (\hat{\theta}_2 - \theta_2).
\end{split}
\end{align}
Using the fact that $\theta_1^*= \theta_1+S_{1,1}^{-1} S_{1,2} \theta_2+(Z_1'Z_1)^{-1}Z_1'\tilde{Z}_2 \theta_2$, we have
\begin{align*}
\theta_1^*{'}(S_{1,1} \theta_1+S_{1,2} \theta_2) & = \theta_1'S_{1,1} \theta_1 + 2 \theta_1'S_{1,2} \theta_2+ \theta_2'S_{2,1} S_{1,1}^{-1} S_{1,2} \theta_2 \\
& \phantom{=} + (S_{1,1} \theta_1+S_{1,2} \theta_2)(Z_1'Z_1)^{-1} Z_1'\tilde{Z}_2 \theta_2
\end{align*}
as well as
\begin{align*}
\theta_1^*{'} S_{1,1} \theta_1^* &=  \theta_1'S_{1,1} \theta_1 + 2 \theta_1'S_{1,2} \theta_2+ \theta_2'S_{2,1} S_{1,1}^{-1} S_{1,2} \theta_2 \\
& \phantom{=} +2(S_{1,1} \theta_1+S_{1,2} \theta_2)(Z_1'Z_1)^{-1} Z_1'\tilde{Z}_2 \theta_2  \\
& \phantom{=} + \theta_2' \tilde{Z}_2'Z_1(Z_1'Z_1)^{-1}  S_{1,1} (Z_1'Z_1)^{-1} Z_1' \tilde{Z}_2 \theta_2.
\end{align*}
Noting that $ \theta_1'S_{1,1} \theta_1 + 2 \theta_1'S_{1,2} \theta_2+ \theta_2'S_{2,1} S_{1,1}^{-1} S_{1,2} \theta_2= \theta'S\theta - \theta_2'(S_{2,2} - S_{2,1} S_{1,1}^{-1} S_{1,2}) \theta_2$, the third line in \eqref{eq:pe.4} equals
\begin{align*}
 a_1^2  & (\theta'S\theta - \theta_2'(S_{2,2} - S_{2,1} S_{1,1}^{-1} S_{1,2}) \theta_2) + a_2^2 \theta_2'(S_{2,2} - S_{2,1} S_{1,1}^{-1} S_{1,2}) \theta_2 \\
 & + 2 a_1(a_1-a_2) (S_{1,1} \theta_1+S_{1,2} \theta_2)(Z_1'Z_1)^{-1} Z_1'\tilde{Z}_2 \theta_2  \\
&  + (a_2-a_1)^2  \theta_2' \tilde{Z}_2'Z_1(Z_1'Z_1)^{-1}  S_{1,1} (Z_1'Z_1)^{-1} Z_1' \tilde{Z}_2 \theta_2.
 \end{align*}
Using $\hat{\theta}_1 - \theta_1 = \hat{\theta}_1^* - \theta_1^* -S_{1,1}^{-1} S_{1,2} (\hat{\theta}_2 - \theta_2) - (Z_1'Z_1)^{-1} Z_1' \tilde{Z}_2 (\hat{\theta}_2 - \theta_2)$, we see that
\begin{align*}
(\hat{\theta} - \theta)'S \theta & = (\hat{\theta}_1 - \theta_1)'(S_{1,1} \theta_1 + S_{1,2} \theta_2) + (\hat{\theta}_2 - \theta_2)'(S_{2,1} \theta_1 + S_{2,2} \theta_2) \\
& = (\hat{\theta}_1^* - \theta_1^*)'(S_{1,1} \theta_1 + S_{1,2} \theta_2 ) -  (\hat{\theta}_2 - \theta_2)' \tilde{Z}_2'Z_1(Z_1'Z_1)^{-1} (S_{1,1} \theta_1 + S_{1,2} \theta_2 ) \\
& + (\hat{\theta}_2 - \theta_2)'(S_{2,2} -S_{2,1} S_{1,1}^{-1} S_{1,2})\theta_2.
 \end{align*}
Using the formula for $\theta_1^*$ as before in the fifth and seventh line in \eqref{eq:pe.4}, we can rewrite the true mean-squared prediction error as
\begin{align}
\begin{split} \label{eq:pe.5}
\rho^2(m) & = (1-a_2)^2(\hat{\theta}-\theta)'S(\hat{\theta} - \theta)  + s^2\\
& \phantom{=} + (a_2 - a_1)(2 - a_1-a_2)(\hat{\theta}_1^* - \theta_1^*)'S_{1,1} (\hat{\theta}_1^* - \theta_1^*)   \\
&  \phantom{=} + (a_2-a_1)^2  \theta_2' \tilde{Z}_2'Z_1(Z_1'Z_1)^{-1}  S_{1,1} (Z_1'Z_1)^{-1} Z_1' \tilde{Z}_2 \theta_2\\
& \phantom{=} +  a_1^2  (\theta'S\theta - \theta_2'(S_{2,2} - S_{2,1} S_{1,1}^{-1} S_{1,2}) \theta_2) + a_2^2 \theta_2'(S_{2,2} - S_{2,1} S_{1,1}^{-1} S_{1,2}) \theta_2 \\
& \phantom{=}  + 2 a_1(a_1-a_2) (S_{1,1} \theta_1+S_{1,2} \theta_2)'(Z_1'Z_1)^{-1} Z_1'\tilde{Z}_2 \theta_2  \\
& \phantom{=} + 2a_1(a_1 - 1) (\hat{\theta}_1^* - \theta_1^*)'(S_{1,1} \theta_1 + S_{1,2} \theta_2 ) \\
& \phantom{=} + 2(1-a_1)(a_2-a_1) (\hat{\theta}_1^* - \theta_1^*)'S_{1,1} (Z_1'Z_1)^{-1} Z_1' \tilde{Z}_2 \theta_2 \\
& \phantom{=} +2a_2(a_2-1) (\hat{\theta}_2 - \theta_2)'(S_{2,2} -S_{2,1} S_{1,1}^{-1} S_{1,2})\theta_2\\
& \phantom{=}+ 2 a_1(1-a_2)  (\hat{\theta}_2 - \theta_2)' \tilde{Z}_2'Z_1(Z_1'Z_1)^{-1} (S_{1,1} \theta_1 + S_{1,2} \theta_2 ) \\
& \phantom{=} + 2(1-a_2)(a_1-a_2) (\hat{\theta}_2-\theta_2)' \tilde{Z}_2'Z_1 (Z_1'Z_1)^{-1} S_{1,1}(Z_1'Z_1)^{-1} Z_1'\tilde{Z}_2 \theta_2 \\
& \phantom{=} + 2(1-a_2)(a_1 - a_2) (\hat{\theta}_1^* - \theta_1^*)'S_{1,1} (Z_1'Z_1)^{-1} Z_1' \tilde{Z}_2 (\hat{\theta}_2 - \theta_2).
\end{split}
\end{align}
In the preceding display, note that the terms in line five to ten follow, conditional on $X_1$ or $X$, respectively, a centered normal distribution with a variance that is bounded in probability, and it is easy to show that these terms converge to zero in probability. The term in the last line is of the form $w'Qw$ with $\trace(Q)=0$ and $w \sim N(0, s^2 I_{\abs{m_2}})$, and it is not hard to show that this term also converges to zero (see Lemma \ref{lem:qf.traceless} and the subsequent results).
The two results in this section gives some distributional properties of the terms involved in $\rho^2(m)$ and $\hat{\rho}^2(m)$ and motivate that those two quantities lie `close' to each other. 

For integers $k \geq 1$ and $d \geq 1$, let $\chi^2_k(\mu)$ denote a random variable that is chi-square distributed with $k$ degrees of freedom and noncentrality parameter $\mu \geq 0$, and let $W_{k}(S, d)$ denote a random $k \times k$ matrix that follows a Wishart distribution with scale matrix $S$ and $d$ degrees of freedom. We will write $\chi^2_k$ as shorthand for $\chi^2_k(0)$. Further, $\lambda_i(\cdot)$ denotes the $i$-th eigenvalue of the indicated matrix. Because we only consider eigenvalues of symmetric matrices, all eigenvalues are real and we assume that they are sorted in increasing order, i.e., $\lambda_1(\cdot) \leq \ldots \leq \lambda_d(\cdot)$ if $d$ is the dimension of the matrix. 

\begin{appxlem} \label{lem:dist}
Let the assumptions of this section hold. Let $\tilde{\theta}_1 = \theta_1+S_{1,1}^{-1} S_{1,2} \theta_2$, $b =\theta'S \theta$ and $b_2= \theta_2'(S_{2,2} - S_{2,1} S_{1,1}^{-1} S_{1,2}) \theta_2$.

\begin{enumerate}[label=(\roman*), leftmargin=0.7 cm, itemindent=0 cm]

\item \label{it:hot} The term $(\hat{\theta} -\theta)' S (\hat{\theta}-\theta)$ has the same distribution as $s^2$ times the ratio of two independent chi-square distributed random variables with $\abs{m}$ and $n - \abs{m} + 1$ degrees of freedom, respectively, i.e.
\begin{gather*}
(\hat{\theta} -\theta)' S (\hat{\theta}-\theta) \sim s^2 \frac{\chi^2_{\abs{m}}}{\chi^2_{n - \abs{m}+1}}.
\end{gather*}
The term $(\hat{\theta}_1^{*} -\theta_1^*)' S_{1,1} (\hat{\theta}_1^*-\theta_1^*)$ has the same distribution as the term in the preceding display with $\abs{m_1}$ instead of $\abs{m}$.
 
\item \label{it:s2} The estimator $\hat{s}^2$ has the same distribution as a chi-square distributed random variable with $n - \abs{m}$ degrees of freedom multiplied by $s^2$ and divided by $n - \abs{m}$, i.e.,
\begin{gather*}
\hat{s}^2 \sim s^2 \frac{\chi^2_{n - \abs{m}}}{n - \abs{m}}.
\end{gather*}
\item \label{it:nd} Let $\tilde{Z}_2=Z_2 - Z_1 S_{1,1}^{-1} S_{1,2}$. We have conditional on $Z_1$, 
\begin{align}
 \tilde{Z_2} \theta_2 & \sim N(0, b_2 I_n). \label{eq:nd1}
\end{align}

\item \label{it:ncc}  Let $P_1=Z_1(Z_1'Z_1)^{-1} Z_1$ and $M_1 = I_n - P_1$. We have conditional on $Z$,
\begin{align}
Y'P_1Y & \sim s^2 \chi^2_{\abs{m_1}}(\theta'Z'P_1 Z\theta/s^2), \label{eq:yp1y}
\intertext{as well as}
Y'M_1Y & \sim s^2 \chi^2_{n- \abs{m_1}}(\theta'Z'M_1Z\theta/s^2). \label{eq:ym1y}
\intertext{If $b_2 >0$, we have conditional on $Z_1$}
\theta' Z'P_1Z \theta &\sim b_2 \chi^2_{\abs{m_1}} (\tilde{\theta}_1'Z_1'Z_1 \tilde{\theta}_1/b_2). \label{eq:tzp1}
\intertext{If $b_2=0$, we have}
\theta' Z'P_1Z \theta = \theta_1'Z_1'Z_1 \theta_1 &\sim \theta_1'S_{1,1} \theta_1 \chi^2_{n}. \label{eq:tzp1.0}
\intertext{Furthermore, we have}
\theta'Z'M_1Z\theta &\sim b_2 \chi^2_{n-\abs{m_1}} \label{eq:tzm1}
\intertext{and}
\tilde{\theta}_1'Z_1'Z_1 \tilde{\theta}_1 & \sim (b-b_2) \chi^2_n. \label{eq:tt1z1}
\end{align}
If $\theta_1'S_{1,1} \theta_1 =0$, $b_2=0$ or $b-b_2=0$, respectively, the distributions should be understood as point mass at $0$. 

\item \label{it:qf} Conditional on $Z_1$, we have 
\begin{gather*}
Z_2'M_1Z_2 \sim W_{\abs{m_2}}(S_{2,2} - S_{2,1} S_{1,1}^{-1} S_{1,2}, n-\abs{m_1}).
\end{gather*}

\end{enumerate}
\end{appxlem}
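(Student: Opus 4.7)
The plan is to derive each part from standard distributional facts for Gaussian vectors and Wishart matrices, with part (iii) (a conditional‐distribution statement) serving as the workhorse for parts (iv) and (v).

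I begin with (iii). Joint Gaussianity of $(x_1,x_2)$ gives $x_2 - S_{2,1}S_{1,1}^{-1}x_1 \sim N(0,\Lambda)$ independently of $x_1$, where $\Lambda = S_{2,2}-S_{2,1}S_{1,1}^{-1}S_{1,2}$; applied row by row, this yields that, conditionally on $Z_1$, $\tilde{Z}_2$ has iid rows $N(0,\Lambda)$ and is independent of $Z_1$. Hence $\tilde{Z}_2\theta_2\mid Z_1\sim N(0,b_2 I_n)$, which is \eqref{eq:nd1}. Part (ii) is then immediate: $(n-\abs{m})\hat{s}^2 = w'M_Z w$, and conditionally on $Z$ this equals $s^2\chi^2_{n-\abs{m}}$ since $M_Z$ is a projection of rank $n-\abs{m}$ on a full-probability event; the distribution is free of $Z$, so it is also unconditional.

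For (i), $Y = Z\theta+w$ gives $\hat{\theta} - \theta = (Z'Z)^{-1}Z'w$ (independent of $\theta$). Reparameterize via $W = ZS^{-1/2}$, so that $W$ has iid $N(0,1)$ entries and $W'W\sim W_{\abs{m}}(I,n)$. A short algebraic simplification yields $(\hat{\theta}-\theta)'S(\hat{\theta}-\theta) = V'(W'W)^{-1}V$ with $V = (W'W)^{-1/2}W'w$, where $V\mid W\sim N(0,s^2 I_{\abs{m}})$ and therefore $V\sim N(0,s^2 I_{\abs{m}})$ is independent of $W$. The task thus reduces to identifying the distribution of $U'A^{-1}U$ for $A\sim W_p(I,n)$ and $U\sim N(0, I_p)$ independent, with $p = \abs{m}$. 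Decomposing $U = \|U\|e$ via spherical symmetry, and using rotational invariance of $A$ to reduce $e'A^{-1}e$ in distribution to $(A^{-1})_{11}$, the classical inverse-Wishart identity $1/(A^{-1})_{11}\sim\chi^2_{n-p+1}$ delivers $U'A^{-1}U \sim \chi^2_p/\chi^2_{n-p+1}$ with the numerator and denominator independent. The second assertion of (i) follows from the very same argument with $Z_1$, $S_{1,1}$, $\abs{m_1}$ in place of $Z$, $S$, $\abs{m}$, once one notes that $\hat{\theta}_1^* - \theta_1^* = (Z_1'Z_1)^{-1}Z_1'w$ because $Y = Z_1\theta_1^* + M_1 Z_2\theta_2 + w$ and $Z_1'M_1 = 0$.

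Parts (iv) and (v) then fall out of (iii) together with $Y\mid Z\sim N(Z\theta, s^2 I_n)$. For (iv), the standard fact that $Y'PY/s^2\mid Z\sim \chi^2_r(\theta'Z'PZ\theta/s^2)$ for any projection $P$ of rank $r$ gives \eqref{eq:yp1y} and \eqref{eq:ym1y} by taking $P\in\{P_1,M_1\}$. For \eqref{eq:tzp1} and \eqref{eq:tzm1}, decompose $Z\theta = Z_1\tilde{\theta}_1 + \tilde{Z}_2\theta_2$ and apply (iii) to get $P_1 Z\theta\mid Z_1\sim N(Z_1\tilde{\theta}_1, b_2 P_1)$ and $M_1 Z\theta\mid Z_1\sim N(0, b_2 M_1)$; the same Gaussian quadratic-form fact then yields the claimed non-central and central chi-squares. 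The degenerate case \eqref{eq:tzp1.0} uses positive-definiteness of the Schur complement $\Lambda$: $b_2 = 0$ forces $\theta_2 = 0$, hence $\tilde{\theta}_1=\theta_1$ and $Z\theta = Z_1\theta_1$, with $\|Z_1\theta_1\|^2\sim \theta_1'S_{1,1}\theta_1\cdot\chi^2_n$. Finally, \eqref{eq:tt1z1} follows from the direct expansion $\tilde{\theta}_1'S_{1,1}\tilde{\theta}_1 = b - b_2$ and the fact that $Z_1\tilde{\theta}_1$ has iid coordinates $N(0,\tilde{\theta}_1'S_{1,1}\tilde{\theta}_1)$. For (v), $M_1 Z_1 = 0$ gives $Z_2'M_1 Z_2 = \tilde{Z}_2'M_1\tilde{Z}_2$; writing $M_1 = VV'$ with $V'V = I_{n-\abs{m_1}}$ ($V$ measurable with respect to $Z_1$, on the event of full column rank), the product becomes $(V'\tilde{Z}_2)'(V'\tilde{Z}_2)$. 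By rotational invariance of iid Gaussian rows under left multiplication by a matrix with orthonormal columns, the rows of $V'\tilde{Z}_2$ are iid $N(0,\Lambda)$ conditionally on $Z_1$, so the product has the $W_{\abs{m_2}}(\Lambda, n-\abs{m_1})$ distribution.

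The main obstacle is part (i): after the $S\mapsto I$ reduction, the ratio-of-\emph{independent}-chi-squares form rests on the classical inverse-Wishart identity together with the spherical-symmetry/rotational-invariance argument; both ingredients are standard but must be combined with care to ensure the correct independence structure. Everything else reduces to direct applications of (iii) and the elementary fact that $Y\mid Z$ is Gaussian.
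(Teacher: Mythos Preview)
Your proof is correct and follows essentially the same route as the paper: part (iii) is established first from the Gaussian conditional-distribution identity and then used to drive (iv) and (v), while (ii) is the standard projection argument. The only substantive difference is in (i): the paper simply cites an external lemma (Lemma~C.2 in \cite{huber13}) for the Hotelling-type result, whereas you supply a self-contained derivation via the reduction $Z\mapsto W=ZS^{-1/2}$, the spherical-symmetry decomposition $U=\lVert U\rVert e$, and the inverse-Wishart identity $1/(A^{-1})_{11}\sim\chi^2_{n-p+1}$ (the latter being exactly Lemma~\ref{lem:dis.trace} of the present paper). Your argument is more explicit and makes the independence of numerator and denominator transparent; the paper's version is shorter only because it outsources the work.
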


\begin{proof}

By assumption, the rows of $Z$ are independent of each other and follow a normal distribution with mean vector zero and variance-covariance matrix $S$ that is a symmetric, positive definite and unknown $\abs{m} \times \abs{m}$ matrix. By assumption $Z_1$ and $Z_2$ are matrices whose independently distributed rows follow a centered normal distribution with variance-covariance matrix $S_{1,1}$ and $S_{2,2}$, respectively. Recall that $S_{2,2} - S_{2,1} S_{1,1}^{-1} S_{1,2}$ is the Schur complement of $S_{1,1}$ in $S$ and note that it is positive definite because $S$ is positive definite.

Proofs for the statements in \textit{\ref{it:hot}} and \textit{\ref{it:s2}} are well known but can be found in Lemma C.2 in \cite{huber13}. 

\begin{enumerate}[label= \it{(\roman*)}, leftmargin=0.7 cm, itemindent=0 cm]
\setcounter{enumi}{2}

\item Note that $\tilde{Z}_2$ has independent rows that follow a normal distribution with variance-covariance matrix $S_{2,2} - S_{2,1} S_{1,1}^{-1} S_{1,2}$ and that it is independent of $Z_1$. This fact is well known if $Z_1$ and $Z_2$ would be multivariate normal vectors. For the more general case of normal matrices, see for example Corollary 3.3.3.1 in \cite{mardia79}.

\item First of all, note that $P_1$ and $M_1$ are idempotent matrices with $\trace(P_1)=\abs{m_1}$ and $\trace(M_1)=n-\abs{m_1}$. Conditional on $Z$, we have $Y \sim N(Z \theta, s^2 I_n)$ which shows the first and the second statement. For the  statement in \eqref{eq:tzp1}, note that $Z \theta = Z_1 \tilde{\theta}_1+\tilde{Z}_2 \theta_2$ and use \eqref{eq:nd1} to conclude that, conditional on $Z_1$, $Z \theta \sim N(Z_1 \tilde{\theta}_1, b_2 I_n)$ for $b_2 > 0$. Note that $b_2=0$ implies that $\theta_2=0$ and hence that $ Z \theta = Z_1 \theta_1$. Noting that $Z_1 \theta_1 \sim N(0, \theta_1'S_{1,1} \theta_1 I_n)$ completes the statement in \eqref{eq:tzp1.0}.  For the statement in \eqref{eq:tzm1}, note that $\theta'Z'M_1 Z \theta = \theta_2'\tilde{Z}_2'M_1 \tilde{Z}_2 \theta_2$ and use the statement in \eqref{eq:nd1}. For the last statement, note that $Z_1 \tilde{\theta}_1 \sim N(0, \tilde{\theta}_1'S_{1,1} \tilde{\theta}_1 I_n)$ and that $\tilde{\theta}_1 S_{1,1} \tilde{\theta}_1 = b-b_2$.

\item Recall that $Z_2' M_1 Z_2 = \tilde{Z}_2'M_1 \tilde{Z}_2$ and the distribution of $\tilde{Z}_2$ discussed in the proof of \ref{it:nd}. The statement follows from Theorem 3.4.4 in \cite{mardia79}.

\end{enumerate}
\end{proof}

\begin{appxlem} \label{lem:dis.trace}
Let $V$ be a $d \times k$, $d \geq k$, random matrix where the entries are i.i.d.\ standard normally distributed random variables. Then each diagonal element of $(V'V)^{-1}$ follows an inverse chi-square distribution with $d-k+1$ degrees of freedom.
\end{appxlem}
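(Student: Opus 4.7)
The plan is to exploit the exchangeability of the columns of $V$ together with the block-matrix inversion formula, reducing the claim to a standard fact about quadratic forms in a standard normal vector.

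First I would observe that since the columns of $V$ are i.i.d.\ $N(0, I_d)$, they are exchangeable, and hence the joint distribution of $V'V$ is invariant under simultaneous row-column permutations by the same permutation matrix. Consequently all diagonal entries of $(V'V)^{-1}$ have the same distribution, so it suffices to identify the distribution of one diagonal entry, say the $k$-th. Without loss of generality I single out the last column: write $V = (V_{-k}, v_k)$ where $V_{-k}$ is $d \times (k-1)$ and $v_k$ is the $k$-th column of $V$, so that
\begin{gather*}
V'V \;=\; \begin{pmatrix} V_{-k}'V_{-k} & V_{-k}' v_k \\ v_k' V_{-k} & v_k' v_k \end{pmatrix}.
\end{gather*}

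Next I would apply the standard formula for the lower-right block of the inverse of a partitioned symmetric matrix (the Schur-complement formula), which gives
\begin{gather*}
\bigl[(V'V)^{-1}\bigr]_{k,k} \;=\; \frac{1}{v_k' v_k - v_k' V_{-k}\,(V_{-k}'V_{-k})^{-1} V_{-k}' v_k} \;=\; \frac{1}{v_k'(I_d - P_{-k})\,v_k},
\end{gather*}
where $P_{-k} = V_{-k}(V_{-k}'V_{-k})^{-1}V_{-k}'$ is the orthogonal projection onto the column span of $V_{-k}$. With probability one this column span has dimension $k-1$ (using $d \geq k$ and the joint Gaussianity of the entries), so $I_d - P_{-k}$ is an idempotent matrix of rank $d-k+1$.

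Finally I would condition on $V_{-k}$. Since the columns of $V$ are independent, $v_k \sim N(0, I_d)$ independently of $V_{-k}$, so conditional on $V_{-k}$ the quadratic form $v_k'(I_d - P_{-k}) v_k$ is distributed as $\chi^2_{d-k+1}$ (the projection has deterministic rank $d-k+1$ once $V_{-k}$ is fixed). This conditional distribution does not depend on $V_{-k}$, hence it is also the unconditional distribution, and therefore $[(V'V)^{-1}]_{k,k}$ is distributed as $1/\chi^2_{d-k+1}$, i.e., inverse chi-square with $d-k+1$ degrees of freedom. The same conclusion for an arbitrary diagonal entry then follows from the exchangeability argument above. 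There is no real obstacle here; the only care needed is to handle the probability-zero event where $V_{-k}'V_{-k}$ is singular, which can be safely ignored since it has Lebesgue measure zero.
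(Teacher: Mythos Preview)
Your proof is correct and follows essentially the same approach as the paper: both single out one column, apply the Schur-complement/partitioned-inverse formula to express the corresponding diagonal entry as the reciprocal of a quadratic form $v'(I_d-P)v$ with $P$ an idempotent projector of rank $k-1$, and then condition on the remaining columns to obtain the $\chi^2_{d-k+1}$ law. The only cosmetic difference is that the paper isolates the first column and handles the remaining diagonal entries via explicit permutation matrices, whereas you invoke exchangeability of the columns directly; these are equivalent.
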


\begin{proof}
Let $((V'V)^{-1})_{(i,j)}$ be the element in the $i$-th row and $j$-th column of $(V'V)^{-1}$. For the first diagonal element, partition the matrix $V$ as $V=(v_1, V_{-1})$, where $v_1$ is the first column of $V$ and $V_{-1}$ are the remaining $k-1$ columns of $V$, i.e., $v_1$ is a $d$-vector and $V_{-1}$ is a $d \times (k-1)$ matrix. This entails that
\begin{gather*}
V'V = 
\begin{pmatrix}
v_1'v_1  & v_1'V_{-1}\\
V_{-1}'v_1 & V_{-1}'V_{-1}
\end{pmatrix}.
\end{gather*}
The partitioned inversion rule (see, e.g., Appendix A.2.10 in \cite{johnstondinardo97}) shows that the first diagonal element of the inverse of $V'V$ is of the form $(v_1' v_1 - v_1'V_{-1}(V_{-1}'V_{-1})^{-1}V_{-1}'v_1)^{-1}$. This can be rewritten as the inverse of $v_1'Q v_1$, where $Q=I_n-V_{-1}(V_{-1}'V_{-1})^{-1}V_{-1}'$. Note that $Q$ is symmetric, idempotent and has rank $d-k+1$. By assumption $v_1 \sim N(0,I_d)$ and hence $v_1'Qv_1 \Vert V_{-1} \sim \chi^2_{d-k+1}$ (see, e.g, Appendix B.8 in \cite{johnstondinardo97}). Since the conditional distribution is independent of $V_{-1}$, the unconditional distribution coincides with the conditional distribution. This shows the statement for the first diagonal element. For the remaining diagonal elements, let $I_k^{i,j}$ be the $k \times k$ identity matrix with the $i$-th and the $j$-th column interchanged. Premultiplying a $k \times d$-matrix by $I_k^{i,j}$ interchanges the $i$-th and $j$-th row while postmultiplying a $d \times k$-matrix with $I_k^{i,j}$ interchanges the $i$-th and $j$-th column. Hence, 
\begin{gather*}
((V'V)^{-1})_{(a,a)} = (I_k^{1,a} (V'V)^{-1} I_k^{1,a})_{(1,1)} = ((I_k^{1,a} V'V I_k^{1,a})^{-1})_{(1,1)}
\end{gather*}
and the result follows from the first part of the proof.
\end{proof}

\section{Proof of Theorem \ref{th:help.bound}}

In this section, we first give auxiliary results that are needed for proofing Theorem \ref{th:help.bound}. The proof of the theorem is given at the end of this section. 

\begin{appxlem} \label{lem:bound.quadraticform}
For a fixed integer $d \geq 1$, let $v \sim N(0, I_d)$, and let $A$ be a symmetric, positive semidefinite and nonrandom $d \times d$ matrix. Then, we have for any $\eps >0$ that
\begin{align} \label{eq:pos.bound.quadraticform}
\P (v'Av - \trace(A)  \geq \eps) & \leq 
\begin{cases}
\e^{-\frac{d}{2} G(d \lambda_d(A), \eps)} & \text{if } \lambda_d(A)>0\\
0 & \text{else}
\end{cases}
\intertext{as well as}
\P ( v'Av - \trace(A)  \leq - \eps) & \leq 
\begin{cases}
\e^{-\frac{d}{2} G(d \lambda_d(A), - \eps)} & \text{if } \eps < \trace(A) \\
0 & \text{else},
\end{cases}
\label{eq:neg.bound.quadraticform}
\end{align}
where the function $G: (0, \infty) \times (- \trace(A), \infty) \to \R$ is given by $G(x,y)=y/x - \log((x+y)/x)$.
\end{appxlem}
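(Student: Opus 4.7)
The plan is to reduce to a sum of independent weighted chi-squares and apply a Chernoff bound on each tail, using the largest eigenvalue $L := \lambda_d(A)$ as the scalar that majorizes the logarithmic moment generating function. Concretely, I will diagonalize $A = U\Lambda U^T$ with $\Lambda = \operatorname{diag}(\lambda_1(A), \dots, \lambda_d(A))$ and set $z = U^T v \sim N(0, I_d)$, so that
\[
v'Av - \trace(A) = \sum_{i=1}^d \lambda_i(A)(z_i^2 - 1)
\]
is a sum of independent summands whose per-weight log-MGF is $\phi_\lambda(s) := \log \E[\exp(s\lambda(z^2-1))] = -s\lambda - \tfrac{1}{2}\log(1-2s\lambda)$, defined on $s < 1/(2\lambda)$.

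The main technical step I expect to need is the monotonicity observation that, for every admissible $s$, the map $\lambda \mapsto \phi_\lambda(s)$ is nondecreasing on $[0,L]$. This follows from $\partial_\lambda \phi_\lambda(s) = 2s^2\lambda/(1-2s\lambda) \geq 0$ (valid whenever $s < 1/(2L)$) together with $\phi_0 \equiv 0$, and it yields the single-eigenvalue envelope
\[
\log \E\bigl[\exp(s(v'Av - \trace(A)))\bigr] = \sum_i \phi_{\lambda_i(A)}(s) \;\leq\; d\,\phi_L(s) \;=\; -dsL - \tfrac{d}{2}\log(1-2sL)
\]
for every $s \in (-\infty, 1/(2L))$. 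Notice that this envelope is exactly the log-MGF of the centered version of $L\,\chi^2_d$, which is the moral reason the target bound should take the claimed $\chi^2$-like form. I expect this step to be the main obstacle: a naive approach such as $v'Av \leq L\|v\|^2$ with the Chernoff bound for $\chi^2_d$ does not give the correct constants, and the monotonicity of $\phi_\lambda$ is the clean substitute.

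For the upper tail I apply Chernoff with $s > 0$, giving a bound of the form $\exp(-s(\eps + dL) - (d/2)\log(1-2sL))$ to be minimized over $s \in (0, 1/(2L))$. The first-order condition has a unique interior root at $s_\star = \eps/(2L(\eps + dL))$; plugging it back simplifies the exponent to $-(d/2) G(dL, \eps)$. The lower tail is strictly analogous after writing $s = -t$ with $t > 0$: Chernoff yields $\exp(t(dL-\eps) - (d/2)\log(1+2tL))$, and the interior minimizer $t_\star = \eps/(2L(dL-\eps))$ is admissible precisely when $\eps < dL$, a condition ensured by $\eps < \trace(A) \leq dL$; back-substitution produces $-(d/2) G(dL, -\eps)$.

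The degenerate cases need no computation. If $L = 0$ then $A$ vanishes, so $v'Av - \trace(A) \equiv 0$ and both left-hand sides are identically zero; and in the lower-tail statement the case $\eps \geq \trace(A)$ is immediate from the nonnegativity of $v'Av$, which makes the event $\{v'Av \leq \trace(A) - \eps\}$ null. Once the monotonicity of $\phi_\lambda$ is secured, everything else reduces to the standard Chernoff calculus for chi-squared variables.
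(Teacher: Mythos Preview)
Your argument is correct and follows essentially the same route as the paper: diagonalize, reduce to $\sum_i \lambda_i(z_i^2-1)$, bound the log-MGF by replacing every $\lambda_i$ with $L=\lambda_d(A)$ via the monotonicity $\partial_\lambda\phi_\lambda(s)=2s^2\lambda/(1-2s\lambda)\ge 0$, and then optimize the Chernoff exponent, arriving at the same optimizers $s_\star=\eps/(2L(\eps+dL))$ and $t_\star=\eps/(2L(dL-\eps))$. The only cosmetic difference is that the paper writes the monotonicity for the bracket $2s(\eps/d+\lambda)+\log(1-2s\lambda)$ (nonincreasing in $\lambda$) rather than for $\phi_\lambda(s)$ itself, which is the same computation up to sign.
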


\begin{proof}
Throughout the proof, we denote the eigenvalues of $A$ shorthand as $\lambda_1 \leq \ldots \leq \lambda_d$. Let $U\Lambda U'$ be the eigenvalue decomposition of $A$, where $U$ is the matrix whose columns are the normed eigenvectors of $A$ and where $\Lambda= \text{diag}(\lambda_1, \ldots, \lambda_d)$. If $\lambda_d=0$, then $A$ is the zero matrix and the statements of the lemma are trivially fulfilled. Hence, from now on we assume that $\lambda_d>0$.  Because $U$ is orthonormal, $U'w$ has the same distribution as $w$, i.e., $U'w \sim N(0,I_d)$. The left-hand side in \eqref{eq:pos.bound.quadraticform} can be rewritten as
\begin{align}
\P \lrr{ w' \Lambda w- \trace(\Lambda) \geq \eps }  & = \P \lrr{ \sum_{i=1}^d \lambda_i w_i^2- \sum_{i=1}^d \lambda_i \geq \eps } \label{eq:dis.qf}\\
& = \P \lrr{ \e^{ \sum_{i=1}^d s \lambda_i  w_i^2 } \geq \e^{ s \lrr{ \eps+\sum_{i=1}^d \lambda_i } } }, \nonumber
\end{align}
where $s>0$ is arbitrary. Use Markov's inequality to bound the last term in the preceding display from above by
\begin{align*}
\e^{ - s \lrr{ \eps+\sum_{i=1}^d \lambda_i } } \E \lrs{ \e^{ \sum_{i=1}^d s \lambda_i  w_i^2 }}   =  \e^{ -s \lrr{ \eps+\sum_{i=1}^d \lambda_i } } \prod_{i=1}^d \E \lrs{ \e^{ s \lambda_i  w_i^2 } } .
\end{align*}
Note that for every $i$, $\E[\exp(s \lambda_i w_i^2)] $ is the moment generating function of the chi-square distribution with one degree of freedom. The moment generating function is finite for $s \lambda_i < 1/2$ and equals $(1-2 s \lambda_i)^{-1/2} = \exp(-1/2 \log(1-2 s \lambda_i ))$. Hence, let $s>0$ be such that $ s \lambda_i  < 1/2$  for all $1 \leq i \leq d$ or, equivalently, such that $0 < s < 1/(2 \lambda_d)$. Then, the last term in the preceding display equals
\begin{align}
\exp & \lrr{- \frac{1}{2}  \sum_{i=1}^d \bigg[ 2s(\eps/d +  \lambda_i ) + \log(1 - 2 s \lambda_i ) \bigg]  } \label{eq:bound.quadraticform1} \\
& \leq \exp \lrr{- \frac{d}{2} \bigg[  2s(\eps/d +  \lambda_d ) + \log(1-2 s \lambda_d ) \bigg] } ,\label{eq:bound.quadraticform2}
\end{align}
where the inequality follows upon noting that the function in squared brackets in \eqref{eq:bound.quadraticform1} is nonincreasing in $\lambda_i$ for all $i$. (Indeed, note that the first derivative with respect to $\lambda_i$ equals $2s - 2s/(1-2s \lambda_i )$. The last term is nonpositive because $0 < 1- 2 \lambda_i s \leq 1$ as noted in the paragraph preceding the last display.) For choosing the optimal $s$ in \eqref{eq:bound.quadraticform2}, we need to maximize the function $f_{\lambda_d, \eps}(s)= 2s(\eps/d +  \lambda_d ) + \log(1-2 s \lambda_d )$. The first derivative with respect to $s$ equals $2 (\eps/d+\lambda_d) - 2 \lambda_d/(1-2 s \lambda_d )$. Setting the derivative equal zero and rearranging gives $1-2 s^* \lambda_d  = \lambda_d/(\eps/d+\lambda_d)$ which is equivalent to
\begin{gather*}
s^\ast = \frac{1}{2 \lambda_d} - \frac{1}{2(\eps/d + \lambda_d)} = \frac{\eps/d}{2 \lambda_d(\eps/d+\lambda_d)}.
\end{gather*}
Noting that $0 < s^\ast < 1/(2 \lambda_d)$ and that $\partial^2 f_{\lambda_d, \eps}(s)/\partial s^2=- 4 \lambda_d^2/(1-2 s \lambda_d )^2$, we see that indeed $s^\ast$ optimizes the upper bound in \eqref{eq:bound.quadraticform2}.
This shows the statement in \eqref{eq:pos.bound.quadraticform}. For the lower tail bound in \eqref{eq:neg.bound.quadraticform}, use the same arguments as before to show that
\begin{gather*}
\P \lrr{ v'Av - \trace(A) \leq - \eps}  =  \P \lrr{ \sum_{i=1}^d \lambda_i v_i^2 \leq   \sum_{i=1}^d \lambda_i - \eps }.
\end{gather*}
The term on the left-hand side in the preceding display equals zero if $\sum_{i=1}^d \lambda_i  - \eps =\trace(A) - \eps \leq 0$. Hence, the second statement in \eqref{eq:neg.bound.quadraticform} is fulfilled. Thus, consider the case where $\sum_{i=1}^d \lambda_i  - \eps = \trace(A) - \eps > 0$. Let $s>0$ be arbitrary and use Markov's inequality to show that the last term in the preceding display can be rewritten as and bounded from above by
\begin{align*}
\P \lrr{ \e^{ - s \sum_{i=1}^d \lambda_i v_i^2 } \geq \e^{ - s \lrr{ - \eps+\sum_{i=1}^d \lambda_i } } }&  \leq \e^{  s \lrr{ - \eps+\sum_{i=1}^d \lambda_i } }  \E \lrs{ \e^{ - \sum_{i=1}^d s\lambda_i  v_i^2 }}  \\
& =  \e^{ s \lrr{ \sum_{i=1}^d (\lambda_i-\eps/d) } } \prod_{i=1}^d \E \lrs{ \e^{ - s \lambda_i v_i^2 } } .
\end{align*}
As before $\E[ \exp( - s \lambda_i \chi^2_1)]$ is the moment generating function of the chi-square distribution with one degree of freedom which is finite for $- s \lambda_i < 1/2$ (which is fulfilled because $s >0$ and $\lambda_i \geq 0$ for all $i=1, \ldots, n$) and equals $(1+2 s \lambda_i )^{-1/2} = \exp(-1/2 \log(1+ 2 s \lambda_i ))$. We can rewrite the last term in the preceding display as
\begin{align}
\exp & \lrr{- \frac{1}{2}  \sum_{i=1}^d \bigg[ 2s(\eps/d - \lambda_i ) + \log(1 + 2 s \lambda_i ) \bigg] } \label{eq:bound.quadraticform3}
 \\ &
 \leq \exp \lrr{- \frac{d}{2} \bigg[  2s(\eps/d -  \lambda_d ) + \log(1+2 s \lambda_d ) \bigg] },  \label{eq:bound.quadraticform4}
\end{align}
where the inequality follows upon noting that the function in squared brackets in \eqref{eq:bound.quadraticform3} is nonincreasing in $\lambda_i$. (Note that the first derivative with respect to $\lambda_i$ equals $-2s+2s/(1+ 2 s \lambda_i )$. This is nonpositive as $1 \leq 1+2 s \lambda_i $.) In order to choose the optimal $s>0$, we need to maximize the function $f_{-\lambda_d,\eps}(s)$, where $f$ was defined before. Setting the first derivative with respect to $s$ equal zero and solving for $s$ gives by the same arguments used in the derivation of the upper tail bound
\begin{gather*}
s^{\ast \ast} = \frac{\eps/d}{2 \lambda_d(\lambda_d-\eps/d)}.
\end{gather*}
Noting that $\lambda_d-\eps/d$ is positive because $d \lambda_d \geq \sum_{i=1}^d \lambda_i > \eps$ here, we see that $ s^{\ast \ast} > 0$. The second derivative of $f_{-\lambda_d, \eps}(s)$ with respect to $s$ equals $- 4 \lambda_d^2/(1+2 \lambda_d s)^2$ which shows that $s^{\ast \ast}>0$ is a maximizer and ends the proof.
\end{proof}

\begin{appxcor} \label{cor:bound.quadraticform}
Under the assumptions of Lemma \ref{lem:bound.quadraticform}, we have for each $\eps>0$
\begin{align} \label{eq:bound.quadraticform}
\P \lrr{ \lrv{ v'Av - \trace(A)}  \geq \eps} & \leq 
\begin{cases}
2 \exp \lrr{ - d \frac{\eps^2}{4 d \lambda_d(A) (\eps + d \lambda_d(A) )}} & \text{if } \lambda_d(A)>0\\
0 & \text{else.}
\end{cases}
\end{align}
\end{appxcor}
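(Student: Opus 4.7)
My plan is to combine the two one-sided tail bounds in Lemma \ref{lem:bound.quadraticform} via a union bound and then simplify the function $G$ that appears in the exponent into the quadratic-type form stated in \eqref{eq:bound.quadraticform}. If $\lambda_d(A) = 0$, then $A$ is the zero matrix and $v'Av - \trace(A) = 0$ almost surely, so both the probability and the claimed upper bound equal zero and there is nothing to prove. For the case $\lambda_d(A) > 0$, Bonferroni's inequality gives
\[
\P(|v'Av - \trace(A)| \ge \eps) \le \P(v'Av - \trace(A) \ge \eps) + \P(v'Av - \trace(A) \le -\eps),
\]
and applying Lemma \ref{lem:bound.quadraticform} to each summand accounts for the factor $2$ in the corollary.

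The analytic heart of the proof is the elementary inequality
\[
G(x, \eps) \ge \frac{\eps^2}{2 x(x + \eps)} \quad \text{for } x, \eps > 0,
\]
which, after substituting $t = \eps/x$ and using the definition of $G$, reduces to $t - \log(1+t) \ge t^2/[2(1+t)]$. I would verify this by setting $f(t) := t - \log(1+t) - t^2/[2(1+t)]$: a short computation gives $f'(t) = t^2/[2(1+t)^2] \ge 0$ together with $f(0) = 0$, hence $f \ge 0$ on $[0, \infty)$. Plugging $x = d \lambda_d(A)$ into the inequality and multiplying by $d/2$ converts the upper-tail estimate from Lemma \ref{lem:bound.quadraticform} into $\exp\bigl(-d \eps^2/[4 d \lambda_d(A)(\eps + d\lambda_d(A))]\bigr)$, which matches the target exponent.

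For the lower tail I would distinguish two subcases. If $\eps \ge \trace(A)$, Lemma \ref{lem:bound.quadraticform} already yields $\P(v'Av - \trace(A) \le -\eps) = 0$, so the two-sided probability is majorized by the upper-tail bound alone, which is trivially at most twice itself. If $\eps < \trace(A)$, then $u := \eps/(d\lambda_d(A))$ lies in $(0,1)$ since $\trace(A) \le d \lambda_d(A)$, and the Taylor expansion
\[
G(d\lambda_d(A), -\eps) = -u - \log(1-u) = \sum_{k \ge 2} \frac{u^k}{k} \ge \frac{u^2}{2} \ge \frac{u^2}{2(1+u)}
\]
delivers the same lower bound $\eps^2/[2 d\lambda_d(A)(d\lambda_d(A) + \eps)]$ as in the upper-tail case. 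Summing the two tail probabilities and absorbing the factor $2$ then produces \eqref{eq:bound.quadraticform}. The only non-routine ingredient is the elementary estimate on $t - \log(1+t)$; the rest amounts to algebraic bookkeeping.
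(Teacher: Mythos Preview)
Your proposal is correct and follows essentially the same approach as the paper: a union bound on the two one-sided tails of Lemma~\ref{lem:bound.quadraticform}, followed by the elementary lower bound $t-\log(1+t)\ge t^2/[2(1+t)]$ on $G$. The paper outsources these inequalities to Lemmas~C.3--C.4 of \cite{huber13}, and for the lower tail it uses the comparison $G(d\lambda_d(A),-\eps)\ge G(d\lambda_d(A),\eps)$ (valid for $\eps<\trace(A)$) before applying the same quadratic bound, whereas you bound $G(d\lambda_d(A),-\eps)$ directly via the Taylor series; both routes are equally short and yield the same exponent.
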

	
\begin{proof}
The statement follows from Lemma \ref{lem:bound.quadraticform} by noting that $G(d \lambda_d(A), \eps)$ equals $M(0, \eps/(d \lambda_d(A)))$, where $M(\cdot, \cdot)$ is defined in Lemma C.3 in \cite{huber13}, together with Lemma C.4 in \cite{huber13}. The later lemma shows that $M(x,-y) \geq M(x,y)$ for $x \geq 0$ and $0 \leq y < 1$ and that $M(0,y) \geq y^2/(2(y+1))$ for $y \geq 0$. Hence, we have for $\eps < \trace(A)$ that $G(d \lambda_d(A), - \eps) =  M(0, - \eps/(d \lambda_d(A))) \geq M(0, \eps/(d \lambda_d(A))) = G(d \lambda_d(A) , \eps)$ because $\eps/(d \lambda_d(A)) \leq \eps/ \trace(A)< 1$ here, and that $M(0, \eps/(d \lambda_d(A))) \geq \eps^2/(2d \lambda_d(A)(\eps+d \lambda_d(A)))$.
\end{proof}

\begin{appxlem} \label{lem:qf.traceless}
Let $v \sim N(0, I_d)$, and let $A$ be a symmetric $d \times d$ matrix with $\trace(A)=0$. Then for each $\eps > 0$, we have
\begin{align*}
\P  & \lrr{ \lrv{ v'Av} \geq \eps } \\
& \leq  \begin{cases}
2 \exp\lrr{- \frac{d}{2} G(d \lambda_d(A), \eps/2) }  +  2 \exp\lrr{- \frac{d}{2} G(- d \lambda_1(A), \eps/2) }  & \text{ if } \lambda_d(A) >0 \\
 0 & \text{ else,}
\end{cases}
\end{align*}
where $G(\cdot, \cdot)$ is defined in Lemma \ref{lem:bound.quadraticform}.
\end{appxlem}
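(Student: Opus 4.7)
The plan is to reduce the traceless problem to two applications of Lemma~\ref{lem:bound.quadraticform} via the positive/negative spectral decomposition of $A$. First I would handle the degenerate case: if $\lambda_d(A) = 0$, then all eigenvalues of $A$ are nonpositive, and since $\trace(A) = 0$ forces them all to be zero, we get $A = 0$ and the left-hand side is $0$. From now on assume $\lambda_d(A) > 0$; since $\trace(A) = 0$, this automatically forces $\lambda_1(A) < 0$.

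Next, via the eigendecomposition write $A = A^+ - A^-$ where $A^+$ retains the positive eigenvalues of $A$ and $A^-$ is built from the negatives (both positive semidefinite). By construction $\lambda_d(A^+) = \lambda_d(A)$ and $\lambda_d(A^-) = -\lambda_1(A)$, and the condition $\trace(A) = 0$ gives the crucial identity $\trace(A^+) = \trace(A^-)$. Because the traces cancel, we may write
\begin{gather*}
v'Av \;=\; \bigl[v'A^+v - \trace(A^+)\bigr] - \bigl[v'A^-v - \trace(A^-)\bigr],
\end{gather*}
so that by the triangle inequality, $|v'Av| \geq \eps$ forces either $|v'A^+v - \trace(A^+)| \geq \eps/2$ or $|v'A^-v - \trace(A^-)| \geq \eps/2$. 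A union bound reduces the problem to the two (now centered) positive semidefinite quadratic forms, to which Lemma~\ref{lem:bound.quadraticform} applies directly, yielding an upper and a lower tail bound for each, i.e., four terms in total governed by $G(d\lambda_d(A^\pm), \pm\eps/2)$.

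To collapse each pair of tails into the single term in the claim, I would invoke the comparison $G(x,-y) \geq G(x,y)$ for $0 \leq y < x$, which is exactly the inequality used in the proof of Corollary~\ref{cor:bound.quadraticform} (via $G(x,\cdot) = M(0,\cdot/x)$ and Lemma~C.4 of \cite{huber13}). Whenever the lower-tail bound in Lemma~\ref{lem:bound.quadraticform} is nontrivial, we have $\eps/2 < \trace(A^\pm) \leq d\lambda_d(A^\pm)$, so the comparison applies and the lower-tail exponential is dominated by the upper-tail one; when the lower-tail bound is vacuous (equal to $0$), the inequality is trivial. Substituting $\lambda_d(A^+) = \lambda_d(A)$ and $\lambda_d(A^-) = -\lambda_1(A)$ then gives exactly the two pairs of terms in the statement, with the factor of $2$ absorbing the two tails.

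The only genuine delicacy is bookkeeping around the ``edge cases'' of Lemma~\ref{lem:bound.quadraticform}, namely checking that the regime $\eps/2 < \trace(A^\pm)$ where the lower-tail bound is active is precisely the regime where $G(x, -y) \geq G(x,y)$ is available; the argument above shows these coincide, so no extra case analysis beyond a short remark is needed. Everything else is a routine combination of triangle inequality, union bound, and the already-proved one-sided exponential bounds.
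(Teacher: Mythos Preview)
Your proposal is correct and follows essentially the same route as the paper: handle the degenerate case $\lambda_d(A)=0$, split $A$ into its positive and negative spectral parts, apply the triangle inequality and a union bound, and invoke Lemma~\ref{lem:bound.quadraticform} on each part. The paper's proof ends tersely with ``the result follows from Lemma~\ref{lem:bound.quadraticform}'' and leaves implicit the step you spell out explicitly --- namely, that the lower-tail term $\exp\bigl(-\tfrac{d}{2}G(d\lambda_d(A^\pm),-\eps/2)\bigr)$ is dominated by the upper-tail term via $G(x,-y)\geq G(x,y)$ for $0\leq y<x$ (or is vacuous when $\eps/2\geq\trace(A^\pm)$); your bookkeeping here is a useful clarification rather than a departure.
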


\begin{proof}
Because $\trace(A) = \sum_{i=1}^d \lambda_i(A) = 0$, we either have $\lambda_d(A)=0$ or $\lambda_d(A)>0$. If $\lambda_d(A)=0$, all eigenvalues are zero which implies that $A$ is the zero matrix (because it is symmetric) and the statement holds trivially. From now on, we assume that $\lambda_d(A) > 0$ implying that $\lambda_1(A) <0$. Let $U \Lambda U'$ be the eigenvalue decomposition of $A$.  Let $\Lambda^+$ be the matrix $\Lambda$ where all negative entries are substituted by 0, and let $\Lambda^-$ be the matrix $\Lambda$ where all nonnegative entries are substituted by 0. Note that $\Lambda = \Lambda^+ + \Lambda^-$. Hence, we have
\begin{align*}
v'Av - \trace(A) = v'U \Lambda^+ U'v - \trace(\Lambda^+)+ v'U \Lambda^- U' v - \trace(\Lambda^-).
\end{align*}
Because $U$ is orthonormal, we see that $U'v$ has the same distribution as $v$, i.e., $U'v \sim N(0, I_d)$. The left-hand side of the statement can then be rewritten as
\begin{align*}
\P & \lrr{\lrv {v'Av - \trace(A)} \geq \eps } \\ 
& \leq \P \lrr{ \lrv{ v' \Lambda^+ v - \trace(\Lambda^+)}\geq \eps/2}  
 + \P \lrr{ \lrv{ v' \Lambda^- v - \trace(\Lambda^-)}\geq \eps/2} \\
& = \P \lrr{ \lrv{ v' \Lambda^+ v - \trace(\Lambda^+)}\geq \eps/2}  
+ \P \lrr{ \lrv{ v' (-\Lambda^-) v - \trace(-\Lambda^-)}\geq \eps/2}.
\end{align*}
Because $\Lambda^+$ as well as $-\Lambda^-$ are positive semidefinite by construction and because $\lambda_d(\Lambda^+) = \lambda_d(A) >0$ and $\lambda_d(-\Lambda^-) = - \lambda_1(A)>0$, the result follows from Lemma \ref{lem:bound.quadraticform}.
\end{proof}

\begin{appxlem} \label{lem:qf.traceless0}
Let $v \sim N(0, I_d)$, and let $A$ be a $d \times d$ matrix such that $\trace(A)=0$ and $AA= \mathbf{0}$, i.e., the $d \times d$ matrix consisting of only zeros. Then, we have for every $\eps > 0$
\begin{align*}
\P  \lrr{ \lrv{ v'Av} \geq \eps } \leq  \begin{cases}
 4 \exp\lrr{- \frac{d}{2} G(d \sqrt{\lambda_d(A'A)/2}, \eps/2) }  & \text{ if } \lambda_d(A+A') > 0 \\
 0 & \text{ else}
\end{cases}
\end{align*}
where $G(\cdot, \cdot)$ is defined in Lemma \ref{lem:bound.quadraticform}.
\end{appxlem}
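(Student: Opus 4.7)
The plan is to reduce the non-symmetric quadratic form to a symmetric one and then invoke Lemma~\ref{lem:qf.traceless}. Since $v'Av = v'\bigl((A+A')/2\bigr) v$, set $B = (A+A')/2$. Then $B$ is symmetric and $\trace(B) = \trace(A) = 0$. If $\lambda_d(A+A') = 0$ then $\lambda_d(B) = 0$; combined with $\trace(B) = 0$ this forces all eigenvalues of $B$ to vanish, hence $B = 0$ and $\P(|v'Av|\geq \eps) = 0$, handling the trivial case. From now on assume $\lambda_d(A+A')>0$, equivalently $\lambda_d(B) > 0$ (and thus $\lambda_1(B) < 0$).

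Next I would use the nilpotency hypothesis $AA = \mathbf{0}$. Taking transposes, also $A'A' = (AA)' = \mathbf{0}$. Therefore
\begin{equation*}
(A+A')^2 \;=\; AA + AA' + A'A + A'A' \;=\; AA' + A'A,
\end{equation*}
so $B^2 = (AA' + A'A)/4$. The eigenvalues of $B^2$ are $\lambda_i(B)^2$, and since $AA'$ and $A'A$ share the same nonzero eigenvalues (hence the same largest eigenvalue), Weyl's inequality yields
\begin{equation*}
\max\bigl(\lambda_d(B)^2,\lambda_1(B)^2\bigr) \;=\; \lambda_d(B^2) \;\leq\; \tfrac{1}{4}\bigl(\lambda_d(AA') + \lambda_d(A'A)\bigr) \;=\; \tfrac{1}{2}\lambda_d(A'A).
\end{equation*}
Consequently both $\lambda_d(B)$ and $-\lambda_1(B)$ are bounded by $\sqrt{\lambda_d(A'A)/2}$.

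Applying Lemma~\ref{lem:qf.traceless} to $B$ gives
\begin{equation*}
\P(|v'Av| \geq \eps) \;\leq\; 2 \exp\!\lrr{-\tfrac{d}{2} G(d\lambda_d(B),\eps/2)} + 2 \exp\!\lrr{-\tfrac{d}{2} G(-d\lambda_1(B),\eps/2)}.
\end{equation*}
Now I would observe that $G(x,y) = y/x - \log(1 + y/x)$ is nonincreasing in $x > 0$ for fixed $y > 0$: writing $t = y/x$, we have $G = t - \log(1+t)$, whose derivative in $t$ is $t/(1+t) > 0$, so $G$ is increasing in $t$ and hence decreasing in $x$. Combined with the eigenvalue bounds above, each summand is at most $\exp\!\lrr{-(d/2) G(d\sqrt{\lambda_d(A'A)/2}, \eps/2)}$, which yields the claimed bound with leading constant $4$.

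The only mildly delicate step is the eigenvalue manipulation: one must remember that $AA' \neq A'A$ in general but that their nonzero spectra coincide so that $\lambda_d(AA') = \lambda_d(A'A)$, and one must be careful to apply Weyl's inequality (not merely subadditivity of the largest eigenvalue over general matrices) to the sum of positive semidefinite matrices $AA'$ and $A'A$. Everything else is bookkeeping through the symmetrization and the monotonicity of $G$.
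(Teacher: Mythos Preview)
Your proof is correct and follows essentially the same route as the paper: symmetrize via $B=(A+A')/2$, invoke Lemma~\ref{lem:qf.traceless}, use $AA=\mathbf{0}$ (and hence $A'A'=\mathbf{0}$) to get $(A+A')^2=AA'+A'A$, bound the extreme eigenvalues of $B$ by $\sqrt{\lambda_d(A'A)/2}$ via Weyl's inequality, and conclude by monotonicity of $G$ in its first argument. Your write-up is in fact slightly more explicit than the paper's in justifying the monotonicity of $G$ and the eigenvalue identity $\lambda_d(B^2)=\max(\lambda_d(B)^2,\lambda_1(B)^2)$.
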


\begin{proof}
Let $A'$ be the transpose of $A$ and set $B = (A+A')/2$. Then, $B$ is symmetric, $\trace(B)=0$ and $v'Av = w'Bw$. Note that $\lambda_i(B) =  \lambda_i(A+A')/2$ for all $i=1, \ldots, d$. If $\lambda_d(B)=0$, then all eigenvalues of $B$ are 0 (because $\trace(B)=0$) implying that $B$ is the zero matrix (because $B$ is symmetric by construction). Noting that $\lambda_d(B)=0$ if and only if $\lambda_d(A+A')=0$, the second line of the statement is trivially fulfilled. If $\lambda_d(B) = \lambda_d(A+A')/2 >0$, we can use Lemma \ref{lem:qf.traceless} to bound the quantity of interest from above by
\begin{gather*}
2 \exp\lrr{- \frac{d}{2} G( d \lambda_d(B), \eps/2) } +  2 \exp\lrr{- \frac{d}{2} G( - d \lambda_1(B), \eps/2)}.
\end{gather*}
Note that $(\lambda_i(A+A'))^2=\lambda_j((A+A')^2)$ for some $i$ and $j$, that $(A+A')^2= A'A + AA'$ by assumption and that $A'A$ and $AA'$ have the same eigenvalues. Hence $0 < \lambda_d(B) \leq 1/2 \sqrt{\lambda_d(A'A+AA')} \leq  1/2 \sqrt{2 \lambda_d(A'A)}$ by Weyl's inequality. Using the same arguments as for $\lambda_d(B)$, we have $0 < - \lambda_1(B) \leq 1/2 \sqrt{2 \lambda_d(A'A)}$. The result follows from the fact that the function $G(\cdot , \cdot)$ is nonincreasing in its first argument.
\end{proof}

\begin{appxcor} \label{cor:qf.traceless0}
Under the assumptions of Lemma \ref{lem:qf.traceless0}, we have for each $\eps >0$
\begin{align*}
\P  \lrr{ \lrv{ v'Av} \geq \eps } \leq  \begin{cases}
 4 \exp\lrr{- d \frac{\eps^2}{4 d \sqrt{2 \lambda_d(A'A)} (\eps + d \sqrt{ 2 \lambda_d(A'A)} )  }  }  & \text{if } \lambda_d(A+A') > 0 \\
 0 & \text{else.}
\end{cases}
\end{align*}
\end{appxcor}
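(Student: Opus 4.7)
The plan is to obtain this corollary as a direct consequence of Lemma \ref{lem:qf.traceless0} by replacing the function $G(\cdot, \cdot)$ in the exponent with a convenient polynomial lower bound. The case $\lambda_d(A+A') = 0$ is immediate from Lemma \ref{lem:qf.traceless0}, so all the work happens in the case $\lambda_d(A+A') > 0$.

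First I would set $x = d\sqrt{\lambda_d(A'A)/2}$ and $y = \eps/2$, so that Lemma \ref{lem:qf.traceless0} gives the bound $4\exp(-\tfrac{d}{2} G(x,y))$. The next step is to invoke the inequality
\[
G(x,y) \;\geq\; \frac{y^2}{2x(x+y)}
\qquad (y > 0,\; x > 0),
\]
which has essentially already been established in the proof of Corollary \ref{cor:bound.quadraticform}: writing $G(x,y) = y/x - \log(1 + y/x) = M(0, y/x)$ in terms of the function $M$ from Lemma C.3 of \cite{huber13}, Lemma C.4 of \cite{huber13} yields $M(0, y/x) \geq (y/x)^2/(2(y/x + 1))$, and rearranging gives exactly the displayed bound.

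The remaining task is purely algebraic. Substituting $x = d\sqrt{\lambda_d(A'A)/2}$ and $y = \eps/2$ (and noting that $2x = d\sqrt{2\lambda_d(A'A)}$) yields
\[
\frac{d}{2} G(x,y) \;\geq\; \frac{d}{2} \cdot \frac{(\eps/2)^2}{2x(x + \eps/2)}
\;=\; \frac{d \eps^2}{4 \cdot 2x \cdot (2x + \eps)}
\;=\; \frac{d \eps^2}{4 d\sqrt{2\lambda_d(A'A)} \,\bigl(\eps + d\sqrt{2\lambda_d(A'A)}\bigr)},
\]
which is precisely the exponent in the asserted upper bound. Plugging this into the conclusion of Lemma \ref{lem:qf.traceless0} completes the proof.

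There is no real obstacle here; the substantive probabilistic content is already carried by Lemma \ref{lem:qf.traceless0}, and the only non-routine step is recognizing that $G(x,y)$ can be lower-bounded in the same way as in Corollary \ref{cor:bound.quadraticform}. The benefit of this corollary is cosmetic: it replaces the logarithmic-type bound $G$ with a rational expression in $\eps$ and $\lambda_d(A'A)$ that is more easily absorbed into later uniform probability estimates.
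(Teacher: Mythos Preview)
Your proposal is correct and follows essentially the same approach as the paper: the paper's proof simply says the corollary follows from Lemma \ref{lem:qf.traceless0} by the same arguments by which Corollary \ref{cor:bound.quadraticform} follows from Lemma \ref{lem:bound.quadraticform}, and you have spelled out precisely those arguments (identifying $G(x,y)=M(0,y/x)$ and invoking the lower bound $M(0,t)\geq t^2/(2(t+1))$).
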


\begin{proof}
The statement follows from Lemma \ref{lem:qf.traceless0} by the same arguments as Corollary \ref{cor:bound.quadraticform} follows from Lemma \ref{lem:bound.quadraticform}.
\end{proof}

The next two results are standard. We state it only for the sake of completeness. 
\begin{appxlem} \label{lem:nd}
Let $w \sim N(0, \tau^2)$ with $\tau^2 >0$. Then, we have for every $\eps >0$
\begin{gather*}
\P \lrr{ \lrv{w} \geq \eps} \leq 2 \exp \lrr{- \eps^2/(2 \tau^2)}.
\end{gather*}
\end{appxlem}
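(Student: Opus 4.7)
The statement is the standard one-sided Gaussian tail bound extended to a two-sided bound via symmetry, so the plan is a textbook Chernoff argument and I expect no real obstacle.

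The plan is to first reduce to a one-sided bound by symmetry: since $w \sim N(0,\tau^2)$ is symmetric around zero, $\P(|w| \geq \eps) = 2 \P(w \geq \eps)$, so it suffices to show $\P(w \geq \eps) \leq \exp(-\eps^2/(2\tau^2))$. I would apply the Chernoff trick: for any $s > 0$, Markov's inequality gives
\begin{gather*}
\P(w \geq \eps) \;=\; \P\!\lrr{\e^{sw} \geq \e^{s\eps}} \;\leq\; \e^{-s\eps} \, \E[\e^{sw}].
\end{gather*}

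Next I would insert the moment generating function of the centered normal, $\E[\e^{sw}] = \exp(s^2 \tau^2/2)$, to obtain the upper bound $\exp(s^2 \tau^2/2 - s\eps)$. The last step is to optimize the exponent over $s > 0$. Differentiating $s^2 \tau^2 / 2 - s \eps$ with respect to $s$ and setting the derivative to zero yields the minimizer $s^\ast = \eps/\tau^2 > 0$ (valid because $\tau^2 > 0$ and $\eps > 0$), and the second derivative $\tau^2$ is positive, confirming a minimum. Substituting $s^\ast$ gives the exponent $-\eps^2/(2 \tau^2)$, hence $\P(w \geq \eps) \leq \exp(-\eps^2/(2\tau^2))$. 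Multiplying by two on account of the initial symmetry reduction yields the claimed bound.
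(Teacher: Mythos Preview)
Your proof is correct and is exactly the standard Chernoff argument one would expect. The paper itself does not prove this lemma: it merely states that ``the next two results are standard'' and records the inequality for completeness, so there is no proof in the paper to compare against.
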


\begin{appxlem} \label{lem:chi2}
We have for every $k \in \N$ and every $\eps >0$
\begin{align*}
\P \lrr{ \chi^2_k/k - 1  \geq \eps} & \leq  \exp \lrr{ - k \eps^2/(4(\eps+1)) } \\
\intertext{as well as}
\P \lrr{ \lrv{ \chi^2_k/k - 1 } \geq \eps} & \leq 2 \exp \lrr{ - k  \eps^2/(4(\eps+1)) }.
\end{align*}
%
\end{appxlem}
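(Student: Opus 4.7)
The plan is to recognize that $\chi^2_k$ has the representation $\chi^2_k = v'v = v'I_k v$ where $v \sim N(0, I_k)$, so that the statement reduces to a quadratic form concentration inequality already available in the appendix.

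First I would apply Lemma \ref{lem:bound.quadraticform} with $d = k$, $A = I_k$, and deviation $k\eps$. Here $\trace(A) = k$ and $\lambda_d(A) = 1$, so the upper tail bound gives
\begin{gather*}
\P(\chi^2_k - k \geq k\eps) \leq \exp\!\lrr{ -\tfrac{k}{2} G(k, k\eps) },
\end{gather*}
where $G(x,y) = y/x - \log((x+y)/x)$. Since $G(k, k\eps) = \eps - \log(1+\eps)$, I would then invoke the inequality $\eps - \log(1+\eps) \geq \eps^2/(2(\eps+1))$ for $\eps \geq 0$ (this is exactly the estimate $M(0,y) \geq y^2/(2(y+1))$ used in the proof of Corollary \ref{cor:bound.quadraticform} and attributed there to Lemma C.4 in \cite{huber13}). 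This yields
\begin{gather*}
\P(\chi^2_k/k - 1 \geq \eps) = \P(\chi^2_k - k \geq k\eps) \leq \exp\!\lrr{ -\tfrac{k\eps^2}{4(\eps+1)} },
\end{gather*}
which is the first claim.

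For the two-sided bound I would simply apply Corollary \ref{cor:bound.quadraticform} with the same choice $d = k$, $A = I_k$, and deviation $k\eps$, obtaining directly
\begin{gather*}
\P(|\chi^2_k/k - 1| \geq \eps) \leq 2\exp\!\lrr{ -\tfrac{k \cdot (k\eps)^2}{4k \cdot 1 \cdot (k\eps + k)} } = 2\exp\!\lrr{ -\tfrac{k\eps^2}{4(\eps+1)} }.
\end{gather*}
Alternatively the two-sided bound follows from the one-sided bound together with the analogous lower tail bound from Lemma \ref{lem:bound.quadraticform}, using that $G(k, -k\eps) \geq G(k, k\eps)$ (monotonicity property also recorded in the proof of Corollary \ref{cor:bound.quadraticform}) to match the same constant.

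There is no real obstacle here; the only slightly delicate point is the elementary inequality $\eps - \log(1+\eps) \geq \eps^2/(2(\eps+1))$, but this is already established in the cited auxiliary lemmas. The proof is essentially a one-line specialization of the quadratic form bounds to $A = I_k$.
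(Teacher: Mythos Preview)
Your proof is correct. The paper itself does not give a proof of this lemma; it merely records it as a standard result (``The next two results are standard. We state it only for the sake of completeness.''). Your derivation via the specialization $A = I_k$ in Lemma~\ref{lem:bound.quadraticform} and Corollary~\ref{cor:bound.quadraticform} is a clean way to obtain it and is entirely consistent with the paper's framework, since those quadratic-form bounds are in fact Chernoff bounds for weighted sums of independent $\chi^2_1$ variables and reduce exactly to the stated chi-square bound when all weights are equal.
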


\begin{appxlem} \label{lem:chi2.balance}
Fix integers $d \geq k \geq 1$. Let $\chi^2_k(B_d)$ and $B_d > 0$ be real random
variables such that $\chi^2_k(B_d)$ follows conditional
on $B_d$ a noncentral chi-square distribution with $k$ degrees of freedom and 
noncentrality parameter $B_d$. Then for any $\eps > 0$, we have
\begin{align} 
\begin{split} \label{eq:chi2.balance.abs}
\P \lrr{ \lrv{ \frac{\chi_k^2(B_d)}{d}-\frac{k+B_d}{d} }\geq \eps} &  \leq   2 \E \lrs{ \exp \lrr{ - d \frac{\eps^2  }{4( \eps + 2(k/d + B_d/d))} }}
\end{split}
\intertext{as well as} 
\begin{split} \label{eq:chi2.balance}
\P \lrr{ \frac{\chi_k^2(B_d)}{d}-\frac{k+B_d}{d} \geq \eps} &  \leq   \E \lrs{ \exp \lrr{ - d \frac{\eps^2  }{4( \eps + 2(k/d + B_d/d ))} }}.
\end{split}
\end{align}
\end{appxlem}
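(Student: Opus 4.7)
The plan is to work conditionally on $B_d$ and apply the Cram\'er--Chernoff method using the moment generating function of a noncentral chi-square. Recall that for $X \sim \chi_k^2(b)$ with $b \geq 0$ one has $\E[e^{tX}] = (1-2t)^{-k/2} \exp(bt/(1-2t))$ for every $t < 1/2$, so that the centered cumulant generating function satisfies
\begin{align*}
\log \E \lrs{ e^{t(X-(k+b))} } \;=\; -\frac{k}{2}\log(1-2t) \;-\; tk \;+\; \frac{2 b t^2}{1-2t},
\end{align*}
where I have used the identity $bt/(1-2t) - tb = 2 b t^2/(1-2t)$. Everything that follows is derived conditionally on $B_d$; the expectation over $B_d$ is taken only at the very end.

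For \eqref{eq:chi2.balance}, Markov's inequality with $t \in (0, 1/2)$ gives a conditional bound on $\P(\chi_k^2(B_d) - (k+B_d) \geq d\eps \mid B_d)$ whose exponent equals $-t d\eps - (k/2)\log(1-2t) - tk + 2 B_d t^2/(1-2t)$. The elementary inequality $-\log(1-x) - x \leq x^2/(2(1-x))$ on $[0,1)$, obtained by the term-by-term comparison $-\log(1-x) - x = \sum_{n \geq 2} x^n/n \leq \sum_{n \geq 2} x^n/2$, applied with $x = 2t$ yields $-(k/2)\log(1-2t) - tk \leq k t^2/(1-2t)$, so the exponent is at most $-t d\eps + t^2(k+2B_d)/(1-2t)$. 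Setting $t^* = d\eps/(2(d\eps + k + 2B_d)) \in (0, 1/2)$, which gives $1-2t^* = (k+2B_d)/(d\eps + k + 2B_d)$, the upper bound simplifies to $\exp(-d^2\eps^2/(4(d\eps + k + 2B_d)))$. Since $k + 2 B_d \leq 2(k + B_d)$, this is no larger than $\exp(-d\eps^2/(4(\eps + 2(k/d + B_d/d))))$, which is exactly the exponent in \eqref{eq:chi2.balance}; taking expectation over $B_d$ finishes the proof of that statement.

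For the two-sided bound \eqref{eq:chi2.balance.abs} I would handle the lower tail analogously by repeating the computation with $t$ replaced by $-s$ for $s > 0$, producing the Chernoff exponent $-s d\eps + s k - (k/2) \log(1+2s) + 2 s^2 B_d/(1+2s)$. Combining the inequality $\log(1+x) \geq x - x^2/2$ for $x \geq 0$ (which follows from non-negativity of the derivative of $x \mapsto \log(1+x) - x + x^2/2$) with $1/(1+2s) \leq 1$ bounds this by $-s d\eps + s^2(k+2B_d)$; its minimum in $s>0$ is attained at $s^* = d\eps/(2(k+2B_d))$ and equals $-d^2 \eps^2/(4(k+2B_d))$, once again dominated by $-d\eps^2/(4(\eps + 2(k/d + B_d/d)))$. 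A union bound over the two tails produces the factor $2$, and expectation over $B_d$ delivers \eqref{eq:chi2.balance.abs}.

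The main obstacle is purely algebraic: one has to pick the Cram\'er--Chernoff parameter so that the resulting Bernstein-type denominator takes exactly the prescribed additive form in $\eps$ and $(k+B_d)/d$. The fact that $t^*$ and $s^*$ depend on the random variable $B_d$ causes no issue, since the Chernoff bound is derived pointwise in $B_d$ and the expectation is taken only at the very end.
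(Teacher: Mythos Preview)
Your proof is correct and follows essentially the same approach as the paper: condition on $B_d$, apply a Chernoff-type tail bound for the noncentral chi-square distribution, and take expectations at the end. The only difference is that the paper cites a ready-made inequality (Corollary~C.5 in \cite{huber13}) for $\P(|\chi_k^2(b)/(k+b)-1|\geq t)$, whereas you derive it from scratch via the moment generating function and the elementary bounds $-\log(1-x)-x\leq x^2/(2(1-x))$ and $\log(1+x)\geq x-x^2/2$; your explicit choices of $t^*$ and $s^*$ reproduce exactly the Bernstein-type exponent the paper quotes.
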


\begin{proof}
Rewrite the left-hand side in \eqref{eq:chi2.balance.abs} as
\begin{gather*} 
 \E \lrs{ \P \lrr{ \lrv{ \frac{\chi_{k}^2(B_d)}{k+B_d}-1}\geq \eps \frac{d}{k+B_d}  \Big \Vert B_d } } 
\end{gather*}
and use the Chernoff bound for the noncentral chi-square distribution as in Corollary C.5 in \cite{huber13}, conditional on $B_d$, to bound the term in the preceding display from above by
\begin{align}
2 \E \lrs{ \exp \lrr{ - k  \frac{  \eps^2  d^2/(k(k+B_d))}{4 \left(  \eps d/(k+B_d)+2 \right)} }}  = 2 \E \lrs{ \exp \lrr{ - d \frac{\eps^2  d}{4( \eps  d+2(k+B_d))} } }.
\end{align}
The proof of the second statement is the same but using the one-sided version of the tail bound of the noncentral chi-square distribution.
\end{proof}

\begin{appxlem} \label{lem:eigenvalues}
Let $W$ follow a Wishart distribution with scale matrix $I_k$ and $d \geq k \geq 2$ degrees of freedom, i.e., $W \sim W_k(I_k, d)$. Then, we have for every $\gamma_1 \in [0,1)$
\begin{gather*}
\P \lrr{ \lambda_1(W/d) \leq \gamma_1^2(1-\sqrt{k/d})^2} \leq \exp \lrr{ - d (1 - \gamma_1)^2(1-\sqrt{k/d})^2/2}.
\end{gather*}
Furthermore, we have for every $\eps > 0$
\begin{gather*}
\P \lrr{ \lambda_d(W/d) \geq (1+\sqrt{k/d}+\eps)^2} \leq \exp \lrr{ - d \eps^2/2}.
\end{gather*}
Especially, we have for every $\gamma_2 > 0$
\begin{gather*}
\P \lrr{ \lambda_d(W/d) \geq (1+\gamma_2)^2(1+\sqrt{k/d})^2} \leq \exp \lrr{ - d  \gamma_2^2(1+\sqrt{k/d})/2}.
\end{gather*}
\end{appxlem}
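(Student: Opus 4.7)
The plan is to realize $W$ concretely as $W = X'X$ where $X \in \R^{d \times k}$ has i.i.d.\ standard normal entries, so that $\lambda_1(W) = \sigma_{\min}(X)^2$ and $\lambda_d(W) = \sigma_{\max}(X)^2$, and then invoke the sharp non-asymptotic concentration inequality for extreme singular values of a Gaussian matrix. Concretely, the Davidson--Szarek bound (Theorem II.13 in their Handbook chapter), which follows from Borell's Gaussian concentration inequality applied to the $1$-Lipschitz maps $X \mapsto \sigma_{\max}(X)$ and $X \mapsto -\sigma_{\min}(X)$ combined with the Gordon--Slepian expectation bounds $\E[\sigma_{\max}(X)] \leq \sqrt{d} + \sqrt{k}$ and $\E[\sigma_{\min}(X)] \geq \sqrt{d} - \sqrt{k}$, gives for every $t \geq 0$
\begin{align*}
\P\!\lrr{ \sigma_{\max}(X) \geq \sqrt{d} + \sqrt{k} + t } & \leq \exp(-t^2/2), \\
\P\!\lrr{ \sigma_{\min}(X) \leq \sqrt{d} - \sqrt{k} - t } & \leq \exp(-t^2/2).
\end{align*}

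For the second statement of the lemma, I would take $t = \sqrt{d}\, \eps$ in the upper-tail bound. Then $\sigma_{\max}(X) \geq \sqrt{d}(1+\sqrt{k/d}+\eps)$ is equivalent to $\lambda_d(W/d) \geq (1+\sqrt{k/d}+\eps)^2$, so the probability is at most $\exp(-d \eps^2/2)$ as claimed. For the first statement, I would take $t = (1-\gamma_1)(\sqrt{d}-\sqrt{k})$, which is non-negative because $\gamma_1 \in [0,1)$ and $d \geq k$. Then $\sqrt{d}-\sqrt{k}-t = \gamma_1(\sqrt{d}-\sqrt{k})$, so the event $\sigma_{\min}(X) \leq \gamma_1(\sqrt{d}-\sqrt{k})$ coincides with $\lambda_1(W/d) \leq \gamma_1^2(1-\sqrt{k/d})^2$, and the bound becomes $\exp(-d(1-\gamma_1)^2(1-\sqrt{k/d})^2/2)$.

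The third statement is a direct specialization of the second: substituting $\eps = \gamma_2(1+\sqrt{k/d})$ yields $(1+\sqrt{k/d}+\eps)^2 = (1+\gamma_2)^2(1+\sqrt{k/d})^2$, and the exponent $-d\eps^2/2$ becomes $-d\gamma_2^2(1+\sqrt{k/d})^2/2$, which is (at least) $-d\gamma_2^2(1+\sqrt{k/d})/2$ since $1+\sqrt{k/d} \geq 1$; this gives the stated weaker bound.

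The only real step that does any work is invoking the Davidson--Szarek estimate; everything else is algebraic rearrangement. The main obstacle, if one wants a fully self-contained proof rather than citing a standard reference, is establishing the expectation bounds on $\E[\sigma_{\max}(X)]$ and $\E[\sigma_{\min}(X)]$ via Gordon's min--max comparison theorem. Since the paper already works at a level of classical concentration tools (Chernoff, Markov, Gaussian tails), I would simply cite the Davidson--Szarek result and perform the three one-line substitutions above.
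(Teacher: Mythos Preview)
Your proposal is correct and takes essentially the same approach as the paper: the paper's proof simply cites Theorem 2.13 of Davidson--Szarek (and points to \cite{huber13} for a detailed derivation of the first statement), which is precisely the concentration inequality you invoke. Your write-up is in fact more explicit than the paper's, spelling out the three substitutions $t=\sqrt{d}\,\eps$, $t=(1-\gamma_1)(\sqrt d-\sqrt k)$, and $\eps=\gamma_2(1+\sqrt{k/d})$ and noting that the third bound is a slight weakening of what the substitution actually gives.
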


\begin{proof}
The lemma follows immediately from Theorem 2.13 in \cite{davidsonszarek01}. A detailed proof of the first statement can be found in \cite{huber13} (see Lemma C.11, Corollary C.12 and the following remark). 
\end{proof}

\begin{appxlem} \label{lem:bound.trace}
Let $V$ be a random $d \times k$ matrix, $d \geq k$, with i.i.d.\ standard
normally distributed entries. Then for $\eps >0$, we have
\begin{align} 
 \label{eq:bound.trace.rel}  
\P \lrr{ \lrv{ \frac{ \trace\lrr{(V'V)^{-1}} }{k/(d-k+1)} -1 } \geq \eps} 
& \leq 2 k \exp
  \lrr{ - (d-k) \frac{\eps^2}{8( \eps + 1)^2}},\\
  \intertext{as well as} \label{eq:bound.trace.abs}
\P \lrr{ \lrv{  \trace\lrr{(V'V)^{-1}} - \frac{k}{d-k+1} } \geq \eps} 
& \leq 2 k \exp
  \lrr{ - (d-k) \frac{\eps^2(1-k/d)^2}{8( \eps(1-k/d) + 1)^2}},
\end{align}
where $\trace((V'V)^{-1})$ is to be interpreted as 1 if $V'V$ is not invertible.
\end{appxlem}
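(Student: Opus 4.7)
The plan is to reduce both inequalities to a one-dimensional tail bound on $1/\chi^2_{d-k+1}$ via a union bound on the diagonal entries of $(V'V)^{-1}$, using Lemma \ref{lem:dis.trace} as the starting point. That lemma tells us each diagonal element $D_i := ((V'V)^{-1})_{(i,i)}$ has marginally the same distribution as $1/\chi^2_{d-k+1}$ (though the $D_i$ are not independent). Writing $Y_i := (d-k+1) D_i$, so that $Y_i \sim (d-k+1)/\chi^2_{d-k+1}$, we have $\trace((V'V)^{-1})/(k/(d-k+1)) = (1/k) \sum_{i=1}^{k} Y_i$, and the elementary inclusion
\[
\lrc{ \lrv{ \tfrac{1}{k} \textstyle\sum_i Y_i - 1 } \geq \eps } \subset \bigcup_{i=1}^{k} \lrc{ |Y_i - 1| \geq \eps }
\]
together with the union bound reduces matters to estimating $\P(|Y_1 - 1| \geq \eps)$.

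For that one-dimensional tail, I would set $Z := \chi^2_{d-k+1}/(d-k+1)$ so that $Y_1 = 1/Z$, and observe that $|1/Z - 1| \geq \eps$ implies $|Z - 1| \geq \eps/(1+\eps)$ (checking the two sides $Z \leq 1/(1+\eps)$ and, when $\eps < 1$, $Z \geq 1/(1-\eps)$ separately, and noting that both give $|Z-1| \geq \eps/(1+\eps)$). Applying Lemma \ref{lem:chi2} with $\delta := \eps/(1+\eps)$ yields
\[
\P(|Y_1 - 1| \geq \eps) \leq 2 \exp\!\lrr{-(d-k+1) \tfrac{\eps^2}{4(1+\eps)(1+2\eps)}} \leq 2 \exp\!\lrr{-(d-k) \tfrac{\eps^2}{8(1+\eps)^2}},
\]
where I used $(1+\eps)(1+2\eps) \leq 2(1+\eps)^2$ and $d-k+1 \geq d-k$. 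Multiplying by $k$ from the union bound gives exactly the first display \eqref{eq:bound.trace.rel}.

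For the second display \eqref{eq:bound.trace.abs}, since $|\trace((V'V)^{-1}) - k/(d-k+1)| \geq \eps$ is equivalent to $|(d-k+1) \trace((V'V)^{-1})/k - 1| \geq \eps(d-k+1)/k$, I would apply the first bound with $\eps$ replaced by $\eps (d-k+1)/k$. Pulling out a factor $(k/(d-k+1))^2$ from numerator and denominator of the exponent yields the bound $2k \exp\!\lrr{-(d-k)\eps^2/(8(\eps + k/(d-k+1))^2)}$. To match the form in the lemma it then suffices to verify
\[
\frac{1}{(\eps + k/(d-k+1))^2} \geq \frac{(1-k/d)^2}{(\eps(1-k/d) + 1)^2},
\]
which, after taking positive square roots and multiplying out, reduces to the inequality $(d-k)^2 + d \geq 0$, trivially true. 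Finally, the probability-zero event where $V'V$ fails to be invertible does not affect any of these bounds under the stated convention.

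I expect the only genuinely fiddly step to be step two, namely turning the inverse-chi-square tail into a chi-square tail: bookkeeping the substitution $\delta = \eps/(1+\eps)$ through Lemma \ref{lem:chi2} and then loosening $(1+\eps)(1+2\eps)$ up to $2(1+\eps)^2$ to get the clean constant $8$ in the denominator. The algebraic reconciliation in step three looks messy but is in fact completely mechanical once the reduction $(d-k)^2 + d \geq 0$ is spotted.
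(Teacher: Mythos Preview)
Your argument is correct and follows the paper's proof essentially line for line: use Lemma~\ref{lem:dis.trace} for the marginal law of the diagonal entries, apply a union bound over the $k$ entries, convert the inverse-chi-square tail to a chi-square tail via $\eps\mapsto\eps/(1+\eps)$, invoke Lemma~\ref{lem:chi2}, and relax $(1+\eps)(1+2\eps)\le 2(1+\eps)^2$. The only cosmetic difference is in the second inequality: the paper simply notes $\eps(d-k+1)/k\ge \eps(1-k/d)$ and uses that the first bound is decreasing in $\eps$, whereas you carry out the equivalent algebraic check $(d-k)^2+d\ge 0$ directly.
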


\begin{proof}
It suffices to consider only the case where $V'V$ has full rank as this happens with probability 1. Lemma~\ref{lem:dis.trace} shows that the trace of $(V'V)^{-1}$ is the sum of $k$ random variables that are not independent and where each follows an inverse $\chi^{2}_{d-k+1}$-distribution. Hence, the left-hand side in \eqref{eq:bound.trace.rel} equals
\begin{gather*}
 \P  \lrr{ \lrv{ \sum_{i=1}^k \lrr{ \frac{d-k+1}{\chi^2_{d-k+1}}  - 1 }} \geq \eps k }
 \end{gather*}
Use the triangle inequality to bound the sum in the preceding display from above by
\begin{align*} 
k &\P \lrr{ \lrv{  \frac{d-k+1}{\chi^2_{d-k+1}}  - 1 } \geq \eps } 
 = k \P \lrr{   \frac{d-k+1}{\chi^2_{d-k+1}}    \geq 1+ \eps } + k \P \lrr{ \frac{d-k+1}{\chi^2_{d-k+1}}    \leq 1 - \eps}.
\end{align*}
Note that the term on the far right-hand side equals zero if $\eps \geq 1$. The sum on the right hand-side in the preceding display can be rewritten as
\begin{gather*} 
k \P \lrr{   \frac{\chi^2_{d-k+1}}{d-k+1}  - 1  \leq - \frac{\eps}{1+\eps} } + k \P \lrr{ \frac{\chi^2_{d-k+1}}{d-k+1}  - 1  \geq  \frac{\eps}{1-\eps} },
\end{gather*}
where the second term is to be interpreted as zero if $\eps \geq 1$. Noting that $\eps/(1- \eps) \geq \eps/(1+\eps)$ for $\eps \in (0, 1)$, we can bound the sum in the preceding display from above by 
\begin{gather*} 
k \P \lrr{  \lrv{ \frac{\chi^2_{d-k+1}}{d-k+1}  - 1 } > \frac{\eps}{1+\eps} }.
\end{gather*}
Use the tail bound for the chi-square distribution as in Lemma~\ref{lem:chi2} to bound the term in the preceding display from above by
\begin{align*}
 2  k \exp\ \lrr{ - (d-k+1) \frac{\eps^2}{4(1+\eps)(2\eps+1)} },
\end{align*}
which is further bounded from above by the right-hand side in \eqref{eq:bound.trace.rel}. The second statement follows from the first statement by rewriting it as
\begin{align*}
\P \lrr{ \lrv{ \frac{ \trace\lrr{(V'V)^{-1}} }{k/(d-k+1)} -1 } \geq \eps \frac{d-k+1}{k}}
\end{align*}
and noting that $\eps(d-k+1)/k \geq \eps (1-k/d)$.
\end{proof}

The next result is rather technical and is separated from the proof of the theorem for the better readability.

\begin{appxlem} \label{lem:prop}

\begin{enumerate}[label=(\roman*), leftmargin=0 cm, itemindent=0.5 cm]

\item Let $x_1, x_2 \in [0,1]$ and $y_1, y_2 \geq 0$ and let $k_1, k_2, k, d \in \N$ such that $k_1 + k_2 = k < d$. Let 
\begin{align*}
q & = (1- x_2)^2 \frac{k}{d-k+1}+(x_2-x_1)(2-x_1-x_2) \frac{k_1}{d - k_1+1} +1+x_1^2 y_1 + x_2^2 y_2 \\
& \phantom{=} +(x_1-x_2)^2 \frac{k_1}{d-k_1 +1} y_2.
\end{align*}
Then, we have the following inequalities:
\begin{align} 
\Bigg \vert (1- x_2)^2 \frac{k}{d-k+1}+(x_2-x_1)(2-x_1-x_2) & \frac{k_1}{d - k_1+1} \nonumber\\
 +1-x_2^2-(x_1-x_2)^2  \frac{k_1}{d-k_1+1} \Bigg \vert/q  & \leq \frac{1}{1-k/d} \label{eq:ineq1} \\
x_1^2/q & \leq \frac{1}{1+y_1+ y_2 k_1/d(1-k_1/d)} \leq 1 \label{eq:ineq2} \\
\lrv{ x_2^2-x_1^2 \frac{k_1}{d} + (x_1 - x_2)^2 \frac{k_1}{d - k_1+1} } / q  &\leq \frac{1}{(1-k_1/d)(1+y_2)} \leq \frac{1}{1-k_1/d} \label{eq:ineq3}
\end{align}

\item For $x \in [0,1]$, the following inequalities hold true:
\begin{align}
2(1-x)^5/9 & \leq (1-\sqrt{x})^4 \label{eq:prop1}\\ 
3(1-x)^5/4  & \leq (1-\sqrt{x})^2. \label{eq:prop2}\\ 
\intertext{For $x,y \in (0,1)$ such that $x + y \leq 1$, we have the following inequalities:}
3 (1-x-y)^5/4  & \leq (1-\sqrt{y/(1-x)})^2(1-x) ,\label{eq:propxy1} \\ 
(1-x-y)^5/2  & \leq (1-\sqrt{y/(1-x)})^2 (1-\sqrt{x})^2(1-x) , \label{eq:propxy2} \\ 
8 (1-x-y)^5/45  & \leq (1-\sqrt{y/(1-x)})^2(1-\sqrt{x})^4, \label{eq:propxy3} \\ 
8(1-x-y)^5/165  & \leq (1-\sqrt{y/(1-x)})^2(1-\sqrt{x})^4 (1-x), \label{eq:propxy4} \\ 
3 (1-x-y)^5/4  & \leq (1-\sqrt{y/(1-x)})^2. \label{eq:propxy5} 
\end{align}
\end{enumerate}

\end{appxlem}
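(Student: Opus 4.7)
My plan is to handle parts (i) and (ii) separately, since they have very different flavors.

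For part (i), the three inequalities all bound a ratio whose numerator is built from the same ingredients as $q$. The key algebraic identity that makes everything tractable is
\[
(x_2-x_1)(2-x_1-x_2) - (x_1-x_2)^2 \;=\; 2(x_2-x_1)(1-x_2),
\]
which lets me rewrite the numerator $N_1$ of \eqref{eq:ineq1} as $(1-x_2)\bigl[(1-x_2)\tfrac{k}{d-k+1} + 2(x_2-x_1)\tfrac{k_1}{d-k_1+1} + (1+x_2)\bigr]$. I would then split into the two cases $x_2\ge x_1$ and $x_2<x_1$. In the first case every factor is non-negative and I can simply use $q\ge 1$ together with $k/(d-k+1)+1 \le d/(d-k)$. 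In the second case the middle term in $N_1$ is negative, while in $q$ the term $(x_2-x_1)(2-x_1-x_2)\tfrac{k_1}{d-k_1+1}$ is also negative, so I'd reorganise by adding/subtracting $(x_1-x_2)^2\tfrac{k_1}{d-k_1+1}$ to reunite the bookkeeping and again lower-bound $q$ by $1$ (dropping the genuinely non-negative squared terms in $q$). For \eqref{eq:ineq2} I drop every summand in $q$ except $x_1^2 y_1$, the constant $1$, and the pieces $x_2^2 y_2 + (x_1-x_2)^2\tfrac{k_1}{d-k_1+1} y_2$; minimising the latter pair over $x_2\in\mathbb R$ gives the value $\tfrac{k_1/d \cdot (1-k_1/d)^{-1}\cdot(\cdots)}$ that, after simplification, yields the target denominator $1 + y_1 + y_2\,\tfrac{k_1/d}{1-k_1/d}$. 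The inequality \eqref{eq:ineq3} is then handled by the same mechanism, using the identity above to rewrite $x_2^2 - x_1^2 \tfrac{k_1}{d} + (x_1-x_2)^2\tfrac{k_1}{d-k_1+1}$ and lower-bounding $q$ by the sum of those very terms plus the relevant multiple of $1+y_2$.

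For part (ii), every inequality becomes a one- or two-variable polynomial inequality after the substitutions $u=\sqrt{x}$ (and, for the joint inequalities, $v=\sqrt{y/(1-x)}$). For \eqref{eq:prop1}, using $(1-\sqrt{x})^4 = (1-x)^4/(1+\sqrt{x})^4$ reduces the claim to $2(1-u)(1+u)^5 \le 9$ on $[0,1]$; differentiating gives a unique interior critical point at $u=2/3$ with value $6250/729 < 9$. For \eqref{eq:prop2} the reduction is $3(1-u)^3(1+u)^5/4 \le 1$, whose maximum at $u=1/4$ is $\approx 0.966 < 1$. The joint inequalities exploit the factorisation $(1-x-y)=(1-x)(1-v^2)$, so after substitution each reduces to the product of a $u$-only inequality of the type already proved in \eqref{eq:prop1}–\eqref{eq:prop2} with a $v$-only inequality of the same shape; I would verify each by the analogous one-dimensional calculus argument.

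The main obstacle is the sign bookkeeping in part (i), particularly \eqref{eq:ineq1} when $x_1>x_2$: one has to pair up the negative contributions in the numerator with matching negative contributions in $q$ to avoid losing the constant $1/(1-k/d)$ (as opposed to something like $2/(1-k/d)$). Getting the precise factor right, rather than just a constant multiple of it, forces the explicit identity above rather than a cruder triangle-inequality bound. Part (ii), by contrast, is essentially bookkeeping of single-variable extrema, and I would expect no conceptual difficulty there beyond the tedium of locating the critical points.
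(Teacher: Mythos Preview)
Your treatment of part~(ii) is essentially the paper's own argument in different notation: the paper packages the substitution $u=\sqrt{x}$, $v=\sqrt{y/(1-x)}$ into a family $f_{a,b}(s,t)=(s-\sqrt{t})^a(s+\sqrt{t})^b$, locates the interior maximum $t^*=s^2(b-a)^2/(a+b)^2$ once, and then reads off each of \eqref{eq:prop1}--\eqref{eq:propxy5} as a bound on some $f_{a,b}$. Your critical points ($u=2/3$ for \eqref{eq:prop1}, $u=1/4$ for \eqref{eq:prop2}) are exactly the same extrema, so nothing is gained or lost here.

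Part~(i) is where you diverge from the paper, and where your plan has real gaps. The paper never case-splits on the sign of $x_2-x_1$; instead it bounds $q$ from below by a carefully chosen sub-expression $q_1$ (or $q_2$) and then shows by direct algebra that $\pm N\cdot(\text{target factor})\le q_1$, repeatedly invoking the identity $(1-x_2)^2+(x_2-x_1)(2-x_1-x_2)=(1-x_1)^2$ together with $k/(d-k+1)\ge k_1/(d-k_1+1)$. Your factorisation $N_1=(1-x_2)\bigl[(1-x_2)\tfrac{k}{d-k+1}+2(x_2-x_1)\tfrac{k_1}{d-k_1+1}+(1+x_2)\bigr]$ is correct and elegant, but your stated argument for $x_2\ge x_1$ does not close: using only $q\ge 1$ and $k/(d-k+1)+1\le d/(d-k)$ leaves the nonnegative middle term $2(x_2-x_1)\tfrac{k_1}{d-k_1+1}$ unaccounted for. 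The easy repair is to observe directly that $q-N_1=x_2^2+(x_1-x_2)^2\tfrac{k_1}{d-k_1+1}+x_1^2y_1+x_2^2y_2+(x_1-x_2)^2\tfrac{k_1}{d-k_1+1}y_2\ge 0$, so $N_1/q\le 1\le 1/(1-k/d)$ whenever $N_1\ge 0$; but that is not what you wrote.

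Two further issues. In \eqref{eq:ineq2} you say ``drop every summand in $q$ except\ldots'', but the summand $(x_2-x_1)(2-x_1-x_2)\tfrac{k_1}{d-k_1+1}$ can be negative; you need precisely the identity above (or the inequality $k/(d-k+1)\ge k_1/(d-k_1+1)$) to see that the first two summands of $q$ together are nonnegative before discarding them. You also read the target in \eqref{eq:ineq2} as $1+y_1+y_2\,\tfrac{k_1/d}{1-k_1/d}$; the intended quantity is $1+y_1+y_2\,\tfrac{k_1}{d}\bigl(1-\tfrac{k_1}{d}\bigr)$, which is what the paper proves and what the downstream applications use. Your minimisation over $x_2$ gives $q\ge 1+x_1^2y_1+x_1^2y_2\,\tfrac{k_1}{d+1}$, which is enough for the paper's version but not for the stronger bound you claim. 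Finally, for the genuinely delicate direction $-N_1/q\le 1/(1-k/d)$ in \eqref{eq:ineq1} (and all of \eqref{eq:ineq3}) your description ``reorganise by adding/subtracting \ldots\ and again lower-bound $q$ by $1$'' is too vague to constitute a plan: the paper needs a page of explicit inequalities here, and lower-bounding $q$ by $1$ is demonstrably insufficient for the sharp constant you yourself flag as the obstacle.
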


\begin{proof}

\begin{enumerate}[label=(\roman*), leftmargin=0 cm, itemindent=0.5 cm]
\item 
We will show that the left hand-side in \eqref{eq:ineq1} is bounded from above by $d/(d-k+1)$ which implies the statement.
Bound $q$ from below by 
\begin{align*} 
q_1=(1-x_2)^2 \frac{k}{d - k + 1}+(x_2-x_1)(2-x_1-x_2) \frac{k_1}{d - k_1 + 1} +1.
\end{align*}
We need to show that the term in absolute value on the left hand-side in \eqref{eq:ineq1} multiplied by $(d-k+1)/d$ is bounded from above by $q_1$. Hence, we have to show that 
\begin{align*}
&\lrr{ (1-x_2)^2\frac{k}{d - k + 1}+(x_2-x_1)(2-x_1-x_2) \frac{k_1}{d - k_1 + 1}} \lrr{1 - \frac{d-k+1}{d} }\\
&+ 1 - \frac{d-k+1}{d}  +x_2^2 \frac{d-k+1}{d}  +(x_1-x_2)^2 \frac{k_1}{d} \frac{d-k+1}{d-k_1+1}\\
\intertext{as well as}
&(1-x_2)^2 \frac{k}{d - k + 1} +(x_2-x_1)(2-x_1-x_2) \frac{k_1}{d - k_1 + 1} \\
&+(1-x_2)^2 \frac{k}{d} +(x_2-x_1)(2-x_1-x_2) \frac{k_1}{d} \frac{d-k+1}{d-k_1+1} \\
& + 1 - (x_1-x_2)^2 \frac{k_1}{d} \frac{d-k+1}{d-k_1+1} +(1-x_2^2) \frac{d-k+1}{d}
\end{align*}
is nonnegative. The term in the second-to-last display is nonnegative because $k/(d-k+1) \geq k_1/(d-k_1+1)$, $(1-x_2)^2+(x_2-x_1)(2-x_1-x_2) = (1-x_1)^2\geq 0$ and $1-(d-k+1)/d \geq 0$. To show that the term in the preceding display is nonnegative, use $k/(d-k+1) \geq k_1/(d-k_1+1)(1-k_2/d)$ for the first term in the first line, that $k/d \geq k_1/d$ for the first term in the second line and that $(d-k+1)/(d-k_1+1) = 1 - k_2/(d-k_1+1)$ for the second term in the second line. Rearranging the terms, it is enough to show that 
\begin{align*}
& \frac{k_1}{d - k_1+1} \lrr{ 1- \frac{k_2}{d}} \lrr{(1-x_2)^2+(x_2-x_1)(2-x_1-x_2)} \\
& + \frac{k_1}{d} \lrr{ (1-x_2)^2+(x_2-x_1)(2-x_1-x_2) } + (1-x_2^2) \frac{d-k+1}{d}\\
& + 1-(x_1-x_2)^2 \frac{k_1}{d} \frac{d-k+1}{d-k_1+1} 
\end{align*}
is nonegative.
The inequality in the third line in the preceding display holds because $k_1/d \leq 1$, $(d-k+1)/(d-k_1+1) \leq 1$ and $x_1, x_2 \in [0,1]$.\\
To show the inequality in \eqref{eq:ineq2}, note that $q$ is bounded from below by $1+ x_1^2 y_1+ x_2^2 y_2+(x_1-x_2)^2y_2  k_1/d $ because $k/(d-k+1) \geq k_1/(d-k_1+1) \geq k_1/d$ and $(1-x_2)^2+(x_2-x_1)(2-x_1-x_2) = (1-x_1)^2\geq 0$. Hence, we need to show that 
\begin{align*}
1 -  x_1^2 + y_2 \lrs{ x_2^2 - x_1^2 \frac{k_1}{d} \lrr{1-\frac{k_1}{d}} + (x_1-x_2)^2 \frac{k_1}{d} } \geq 0.
\end{align*}
Rewriting the sum in squared brackets as $x_2^2 k_1/d + (x_2-x_1 k_1/d)^2$ shows the statement.\\
The second inequality in \eqref{eq:ineq3} is clear. To show the first inequality in \eqref{eq:ineq3}, we will show that the term on the far left-hand side is bounded from above by $1/(1-k_1/d+y_2)$ which implies the first inequality. The second inequality is immediate by recalling that $y_2 \geq 0$. Note that $q$ is bounded from below by
\begin{align*}
q_2 =  (1-x_1)^2 \frac{k_1}{d - k_1 + 1}  + 1 + x_2^2 y_2+(x_1-x_2)^2 \frac{k_1}{d-k_1+1} y_2 
\end{align*}
because $k/(d-k+1) \geq k_1/(d-k_1+1) \geq 0$. Thus, it suffices to show that 
\begin{gather*} 
-q_2 \leq \lrr{ x_2^2-x_1^2 \frac{k_1}{d}+(x_1-x_2)^2 \frac{k_1}{d-k_1+1} } \lrr{1-\frac{k_1}{d}+y_2} \leq q_2
\end{gather*}
or, equivalently, that 
\begin{align}
\begin{split} \label{eq:ineq3.lower}
(1-x_1)^2  \frac{k_1}{d-k_1+1}+ \lrr{ x_2^2 +(x_1-x_2)^2 \frac{k_1}{d-k_1+1} } \lrr{1-\frac{k_1}{d}}  \\
+1-x_1^2 \frac{k_1}{d} \lrr{1-\frac{k_1}{d}} + y_2 \lrs{ 2 x_2^2 + 2 (x_1-x_2)^2 \frac{k_1}{d-k_1+1}   -x_1^2 \frac{k_1}{d}} &\geq 0
\end{split}
\intertext{as well as}
\begin{split} \label{eq:ineq3.upper}
1- x_2^2\lrr{1-\frac{k_1}{d}} + x_1^2 \frac{k_1}{d} \lrr{1-\frac{k_1}{d}} - (x_1-x_2)^2 \frac{k_1}{d-k_1+1} \lrr{1-\frac{k_1}{d}} \\
+ (1-x_1)^2  \frac{k_1}{d-k_1+1}+x_1^2 \frac{k_1}{d} y_2 & \geq 0
\end{split}
\end{align}
holds. To show the inequality in \eqref{eq:ineq3.lower}, note that the sum of the first four terms is nonnegative because $x_1, k_1/d  \in [0,1]$. Hence, we are left with showing that the sum of the terms in squared brackets is nonnegative. Using that $k_1/(d-k_1+1) \geq k_1/d$ and that $1 \geq k_1/d$, it is enough to show that $2 x_2^2+2(x_1-x_2)^2 -x_1^2 \geq 0$. But the left hand-side of the preceding inequality equals $4 x_2^2-4 x_1 x_2+x_1^2 =(2 x_2-x_1)^2 \geq 0$. To show the inequality in \eqref{eq:ineq3.upper}, note that the sum in the second line is nonnegative. Using $(1-k_1/d) k_1/(d-k_1+1) \leq k_1/d$, we can bound the sum in the first line from below by $1-x_2^2-x_1^2 (k_1/d)^2 +2 x_1 x_2 k_1/d=1-(x_1 k_1/d -x_2)^2$. But this is nonegative because $x_1, x_2, k_1/d \in [0,1]$.

\item Define $f_{a,b}(s,t) = (s- \sqrt{t})^a(s + \sqrt{t})^b$ for $a, b \in \N$ with $a < b$ and $s, t \in \R$ with $s>0$ and $0 < t \leq s^2$. Note that the function is nondecreasing in $s$ and that
\begin{align*}
\frac{\partial f_{a,b}(s,t)}{\partial t} & = \frac{(s-\sqrt{t})^{a-1}(s+\sqrt{t})^{b-1}}{2 \sqrt{t}}(  s(b-a) - \sqrt{t} (a+b)) \\
\intertext{and that}
\frac{\partial^2 f_{a,b}(s,t)}{\partial t^2} & = \frac{(s-\sqrt{t})^{a-2}(s+\sqrt{t})^{b-2}}{4 t \sqrt{t}} \Big[ s^3 (a-b) + s^2 \sqrt{t} (a-b)^2 \\
&  \phantom{==} + s t (a-b)(2 a + 2 b-3)+t \sqrt{t}(a+b)(a+b-2) \Big].
\end{align*}
The functions $f_{a,b}(s,t)$ have an extremum at $t^*= s^2(b-a)^2/(a+b)^2$
with the value 
\begin{align} \label{eq:max.f}
f_{a,b}(s,t^*) = a^a b^b (2 s/(a+b))^{a+b}.
\end{align}
This a maximizer because the term inside the squared brackets in the second-to-last display evaluated at $t^*$ equals $ - 4 a b (b-a)   s^3/(a+b)^2$ which is negative because $a < b$ by assumption.

The inequalities in \eqref{eq:prop1} and \eqref{eq:prop2} are equivalent to $9/2 \geq (1-\sqrt{x})(1+ \sqrt{x})^5 = f_{1,5}(1,x)$ and to $4/3 \geq (1-\sqrt{x})^3(1+\sqrt{x})^5=f_{3,5}(1,x)$. Using the formula in \eqref{eq:max.f}, we see that $f_{1,5}(1,x) \leq 5^5/3^6$ and that $f_{3,5}(1,x) \leq 3^3 5^5/4^8$ which shows the statement.

In order to show the inequalities in \eqref{eq:propxy1}, \eqref{eq:propxy2}, \eqref{eq:propxy3} and \eqref{eq:propxy4}, it is enough to show that $(\sqrt{1-x}-\sqrt{y})^3(\sqrt{1-x} + \sqrt{y})^5=f_{3,5}(\sqrt{1-x},y)$ is bounded from above by $4/3$, $2 (1-\sqrt{x})^2$, $45(1-\sqrt{x})^4/(8(1-x))$ and by $165(1-\sqrt{x})^4/8$, respectively. Note that by \eqref{eq:max.f} we have $f_{3,5}(\sqrt{1-x}, y) \leq 3^3 5^5(1-x)^4/4^8$. For the inequality in \eqref{eq:propxy1}, we need to show that $3^3 5^5 (1-x)^4/4^8 \leq 4/3$ which holds because $x \in [0,1)$. For \eqref{eq:propxy2}, we are left with showing that $2^{17}/(3^3 5^5) \geq (1-\sqrt{x})^2(1+\sqrt{x})^4 =f_{2,4}(1, x)$. The right hand-side of the preceding inequality is bounded from above by $2^{10}/3^6$ by \eqref{eq:max.f} which shows the statement. For \eqref{eq:propxy3}, it suffices to show that $2^{13}/(3 \cdot 5^4) \geq (1-\sqrt{x})(1+\sqrt{x})^5 = f_{1,5}(1,x)$. By \eqref{eq:max.f}, $f_{1,5}(1,x)$ is bounded from above by $5^5/3^6$. For inequality in \eqref{eq:propxy4}, we have to show that $(1+\sqrt{x})^4 \leq 11 \cdot 2^{13}/(3^2 5^4)$ which is true because $(1+\sqrt{x})^4 \leq 2^4$ by assumption.

The inequality in \eqref{eq:propxy5} follows immediately from \eqref{eq:propxy1} because the right-hand side in \eqref{eq:propxy5} is bounded from below by the right hand-side in \eqref{eq:propxy1}.

\end{enumerate}

\end{proof}

The next two results collect the convergence rates of the main terms in the proof of Theorem~\ref{th:help.bound}. In both results, we look at one fixed model $m$.
\begin{appxlem} \label{lem:bound.rhohat}
For each $\delta>0$, we have
\begin{align} \label{eq:bound.rhohat.pos}
\P(\hat{\rho}^2(m)/r > \exp(\delta)) & \leq 22 \exp \lrr{- \abs{m_1} \lrr{1- \frac{\abs{m}}{n}}^3 \frac{(\ed-1)^2}{210 \ed} }  \\
\intertext{as well as} \label{eq:bound.rhohat.neg}
\P( \hat{\rho}^2(m)/r < \exp(-\delta)) & \leq 22 \exp \lrr{- \abs{m_1} \lrr{1- \frac{\abs{m}}{n}}^3 \frac{(\ed-1)^2}{840 \exp(2 \delta)} },
\end{align}
where 
\begin{align}
\begin{split} \label{eq:r}
r =  & s^2 \Bigg( (1-a_2)^2 \frac{\abs{m}}{n - \abs{m} +1} +(a_2-a_1)(2-a_1-a_2) \frac{\abs{m_1}}{n - \abs{m_1} +1} + 1 \\
& + a_1^2 ( \mu - \mu_2)  +  a_2^2  \mu_2 + (a_1 - a_2)^2 \frac{\abs{m_1}}{n - \abs{m_1}+1} \mu_2  \Bigg),
\end{split}
\end{align}
where $a_1$ and $a_2$ are the shrinkage factors as in \eqref{eq:a1a2} and where $\mu=\theta'S\theta/ s^2$ and $\mu_2 = \theta_2'(S_{2,2} - S_{2,1} S_{1,1}^{-1} S_{1,2})\theta_2/s^2$.
\end{appxlem}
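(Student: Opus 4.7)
The strategy is to recognize $r$ as the unconditional expectation of $\hat{\rho}^2(m)$, decompose the difference into a small number of centered chi-square and Gaussian quadratic/linear forms, bound each via the tail inequalities of Lemmas~\ref{lem:chi2}--\ref{lem:eigenvalues}, and assemble via Bonferroni together with the algebraic estimates of Lemma~\ref{lem:prop}. Starting from $\hat{\rho}^2(m)/s^2 = w_1 \hat{\sigma}^2(m)/s^2 + w_2 (Y'P_1Y)/(n s^2) + w_3 (Y'M_1Y)/((n-\abs{m_1}) s^2)$, a direct calculation using Lemma~\ref{lem:dist} shows that $r/s^2$ is obtained by replacing $\hat{\sigma}^2(m)/s^2$, $(Y'P_1Y)/(n s^2)$ and $(Y'M_1Y)/((n-\abs{m_1}) s^2)$ by their respective unconditional expectations $1$, $\abs{m_1}/n + (\mu - \mu_2) + \mu_2 \abs{m_1}/n$, and $1 + \mu_2$. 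The shrinkage factors $a_1,a_2$ enter $r$ only through the same weights $w_1,w_2,w_3$, so they need not be controlled separately.

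Consequently $\hat\rho^2(m) - r$ splits as a weighted sum of seven centered deviations: the central chi-square $\hat{\sigma}^2(m)/s^2 - 1$; two noncentral chi-square deviations of $Y'P_1Y$ and $Y'M_1Y$ from their conditional means given $Z$; three central chi-squares coming from $\tilde\theta_1' Z_1' Z_1 \tilde\theta_1 \sim (b-b_2)\chi^2_n$, $\theta_2' \tilde Z_2' P_1 \tilde Z_2 \theta_2 \sim b_2\chi^2_{\abs{m_1}}$ and $\theta_2' \tilde Z_2' M_1 \tilde Z_2 \theta_2 \sim b_2 \chi^2_{n-\abs{m_1}}$; and the mean-zero Gaussian cross term $\tilde\theta_1' Z_1' \tilde Z_2 \theta_2$, whose conditional variance given $Z_1$ is $4 b_2 \tilde\theta_1' Z_1' Z_1 \tilde\theta_1$. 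For these I would apply Lemma~\ref{lem:chi2} to the four central chi-squares, Lemma~\ref{lem:chi2.balance} to the two noncentral ones (integrating out via the tower property), and Lemma~\ref{lem:nd} to the Gaussian cross term, after first restricting to the event that $\lambda_{\max}(Z_1'Z_1/n)$ is of order~$1$ via Lemma~\ref{lem:eigenvalues}. The slowest rate comes from the $\chi^2_{\abs{m_1}}$ piece, which accounts for the $\abs{m_1}$ prefactor in the exponent.

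To convert these additive deviation bounds into the multiplicative bound $\hat\rho^2(m)/r \in [\emd,\ed]$, I would bound the ratio of each weighted contribution to $r/s^2$ using the inequalities \eqref{eq:ineq1}--\eqref{eq:ineq3} of Lemma~\ref{lem:prop}(i) (applied with $x_i = a_i$, $y_1 = \mu - \mu_2$, $y_2 = \mu_2$), which produce factors of order $(1-\abs{m}/n)^{-1}$ or $(1-\abs{m_1}/n)^{-1}$. Allotting the total deviation budget $\ed - 1$ (respectively $1 - \emd$) equally among the seven component events, applying Bonferroni, and then using Lemma~\ref{lem:prop}(ii) to collapse the accumulated powers of $(1-\abs{m}/n)$ into the single factor $(1-\abs{m}/n)^3$ in the exponent, yields \eqref{eq:bound.rhohat.pos} and \eqref{eq:bound.rhohat.neg}. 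The main obstacle will be the constant-chasing: handling the nested conditioning (the cross term has a random conditional variance, and the noncentrality of $Y'P_1Y$ given $Z$ is itself a random chi-square-plus-Gaussian quantity that must be controlled on a further high-probability event), and aligning the powers of $(1-\abs{m}/n)$ with the precise inequalities of Lemma~\ref{lem:prop}(ii) so that the final constants come out to exactly $210$ and $840\exp(2\delta)$.
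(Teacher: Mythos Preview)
Your strategy is essentially the paper's: write $\hat\rho^2(m)/r-1$ as a sum of weighted centered deviations, bound the weight ratios $|w_i|s^2/r$ by Lemma~\ref{lem:prop}(i), and control each deviation by a chi-square tail bound. Two points of divergence are worth noting.

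First, the paper does \emph{not} split $\theta'Z'P_1Z\theta$ into the three pieces you describe (central $\chi^2_{|m_1|}$, Gaussian cross term, and $\tilde\theta_1'Z_1'Z_1\tilde\theta_1$). Instead it uses \eqref{eq:tzp1}: conditionally on $Z_1$, $\theta'Z'P_1Z\theta/s^2\sim \mu_2\,\chi^2_{|m_1|}\bigl(\tilde\theta_1'Z_1'Z_1\tilde\theta_1/(s^2\mu_2)\bigr)$, a single noncentral chi-square, and bounds its deviation via Lemma~\ref{lem:chi2.balance}, then controls the (random) noncentrality by the central-$\chi^2_n$ bound on $\tilde\theta_1'Z_1'Z_1\tilde\theta_1$. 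This gives six (not seven) pieces and completely avoids Lemma~\ref{lem:eigenvalues}; no eigenvalue event is needed anywhere in this proof. Your explicit three-term expansion is also valid, and the cross-term variance $b_2\,\tilde\theta_1'Z_1'Z_1\tilde\theta_1$ can indeed be bounded either by a chi-square tail on $\tilde\theta_1'Z_1'Z_1\tilde\theta_1\sim(b-b_2)\chi^2_n$ or, as you suggest, via $\lambda_{\max}(V_1'V_1/n)$ with $V_1=Z_1S_{1,1}^{-1/2}$ --- but the paper's packaging via the noncentral chi-square is cleaner and yields the stated constants with fewer events to union over.

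Second, Lemma~\ref{lem:prop}(ii) is not used here; the power $(1-|m|/n)^3$ arises directly from the elementary inequalities $n-|m|\ge n(1-|m|/n)$ and $1-|m_1|/n\ge 1-|m|/n$ applied to the six exponents, together with the observation that the slowest term is the $\chi^2_{|m_1|}$-type deviation carrying $|m_1|(1-|m_1|/n)^2$. The square-root inequalities of part~(ii) are reserved for the companion Lemma~\ref{lem:bound.rho}, where Wishart eigenvalues genuinely enter.
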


\begin{proof}
Let $P_1 = Z_1(Z_1'Z_1)^{-1} Z_1$ and $M_1=I_n - P_1$ and rewrite $\hat{\rho}^2(m)/r$ as 
\begin{align*}
&  r_1  \lrr{ \frac{\hat{s}^2}{s^2} - 1} + r_2 \lrr{ \frac{Y'P_1 Y/s^2}{n} - \frac{\abs{m_1}}{n} - \mu_2 \frac{\abs{m_1}}{n} - (\mu-\mu_2) }  \\& 
  + r_3 \lrr{ \frac{Y'M_1Y/s^2 }{n-\abs{m_1}} - 1 -  \mu_2 }  + 1
\end{align*}
with
\begin{align*}
r_1 &= s^2 \Bigg( (1-a_2)^2 \frac{\abs{m}}{n-\abs{m}+1} + (a_2 - a_1)(2 - a_1 - a_2)  \frac{\abs{m_1}}{n-\abs{m_1}+1}  \\
& \phantom{= \hat{\sigma^2(m)}^2 \; \; } +1 - a_2^2 - (a_2- a_1)^2 \frac{\abs{m_1}}{n-\abs{m_1}+1} \Bigg)/r, \\
r_2 &= s^2 a_1^2/r,  \\
r_3 &= s^2 \lrr{ a_2^2 - a_1^2 \frac{m_1}{n} + (a_1- a_2)^2 \frac{\abs{m_1}}{n-\abs{m_1}+1}}/r.
\end{align*}
Use Lemma~\ref{lem:prop}~(i) with $x_1=a_1$, $x_2=a_2$, $y_1=\mu-\mu_2$, $y_2=\mu_2$, $k_1=\abs{m_1}$, $k=\abs{m}$ and $d=n$ to conclude that $\abs{r_1} \leq (1-\abs{m}/n)^{-1}$, that $\abs{r_2} \leq (1+ \mu_2 \abs{m}_1/n (1-\abs{m_1}/n)+\mu-\mu_2)^{-1}$ and that $\abs{r_3} \leq (1-\abs{m_1}/n)^{-1}(1+\mu_2)^{-1}$. Let $\tau_1=\lrr{ 1+\mu_2 \abs{m_1}/n(1-\abs{m_1}/n) +\mu-\mu_2} \tau$ and $\tau_2= \lrr{ 1- \abs{m_1}/n }(1+\mu_2)\tau$ with $\tau=\ed-1$ and note that we can bound the left-hand side in \eqref{eq:bound.rhohat.pos} from above by
\begin{align}
\begin{split} \label{eq:bound.rhohat.1}
& \P  \lrr{ \lrv{\frac{\hat{s}^2}{s^2} - 1}  > \alpha_1  \lrr{ 1- \frac{\abs{m}}{n} } \tau}  \\
& + \P \lrr{ \lrv{ \frac{Y' P_1 Y/s^2}{n} - \frac{\abs{m_1} + \theta'Z' P_1 Z\theta/s^2}{n} } >  \alpha_2  \tau_1}  \\
& + \P \lrr{ \lrv{ \frac{\theta'Z' P_1 Z\theta/s^2}{n} - \mu_2 \frac{\abs{m_1}}{n} - \frac{\tilde{\theta}_1' Z_1' Z_1 \tilde{\theta}_1/s^2}{n}  } >  \alpha_3  \tau_1}.\\
& + \P \lrr{ \lrv{ \frac{\tilde{\theta}_1' Z_1' Z_1 \tilde{\theta}_1/s^2}{n} - (\mu - \mu_2)  } > \alpha_4 \tau_1 }\\
& + \P \lrr{ \lrv{ \frac{Y' M_1 Y/s^2}{n-\abs{m_1}} - \frac{n - \abs{m_1} + \theta'Z' M_1 Z\theta/s^2}{n-\abs{m_1}} } > \alpha_5  \tau_2}  \\
& + \P \lrr{ \lrv{ \frac{\theta'Z' M_1 Z\theta/s^2 }{n-\abs{m_1}} - \mu_2  } > \alpha_6 \tau_2},
\end{split}
\end{align}
where $\alpha_i \in (0,1)$, $i = 1, \ldots, 6$ are such that $\sum_{i=1}^6 \alpha_i =1$ and where $\tilde{\theta}_1=\theta_1 + S_{1,1}^{-1} S_{1,2} \theta_2$. The left-hand side in \eqref{eq:bound.rhohat.neg} is bounded from above by the same upper bound as in \eqref{eq:bound.rhohat.1} with $\tau=1-\emd$. Lemma \ref{lem:dist} shows that $\hat{s}^2/s^2 \sim \chi^2_{n-\abs{m}}/(n - \abs{m})$ and we can bound the first term in \eqref{eq:bound.rhohat.1} using Lemma~\ref{lem:chi2} from above by
\begin{align} \label{eq:bound.rhohat.11}
2 \exp\lrr{ - (n - \abs{m}) \frac{\alpha_1^2(1-\abs{m}/n)^2 \tau^2}{ 4 (\alpha_1(1-\abs{m}/n) \tau +1)} } .
\end{align}
For the remaining terms, consider first the case where $\mu_2>0$ and $\mu-\mu_2>0$. Recall from \eqref{eq:yp1y} in Lemma~\ref{lem:dist} that $Y'P_1 Y/s^2 \Vert Z \sim \chi^2_{\abs{m_1}}(\theta'Z'P_1 Z \theta/s^2)$ and use Lemma~\ref{lem:chi2.balance} to bound the second term in \eqref{eq:bound.rhohat.1} from above by
\begin{align*}
 2 \E \lrs{ \exp \lrr{ - n  \frac{ \alpha_2^2 \tau_1^2  }{4 \left( \alpha_2 \tau_1  +2(\abs{m_1}/n + \theta' Z' P_1 Z \theta/(s^2n) ) \right) } }  }.
\end{align*}
Integrating over the intersection of $\{  \theta' Z' P_1 Z \theta/(s^2n) \leq \alpha_3 \tau_1 + \mu_2 \abs{m_1}/n + \tilde{\theta}_1' Z_1' Z_1 \tilde{\theta}_1/(s^2n) \}$ and $\{ \tilde{\theta}_1' Z_1' Z_1 \tilde{\theta}_1/(s^2n) \leq \alpha_4 \tau_1 + \mu-\mu_2\}$ and its complement, we can bound the term in the preceding display from above by
\begin{align}
\begin{split} \label{eq:bound.rhohat.121} 
& 2  \exp \lrr{ - n  \frac{ \alpha_2^2 \tau_1^2  }{4 \left( (\alpha_2 + 2 \alpha_3 + 2 \alpha_4) \tau_1+2(\abs{m_1}/n + \mu_2 \abs{m_1}/n + \mu-\mu_2 ) \right) } } \\
& + 2  \P \lrr{  \frac{\theta'Z' P_1 Z\theta/s^2}{n} - \mu_2 \frac{\abs{m_1}}{n} - \frac{\tilde{\theta}_1' Z_1' Z_1 \tilde{\theta}_1/s^2}{n}  > \alpha_3 \tau_1}  \\
& + 2  \P \lrr{  \frac{\tilde{\theta}_1' Z_1' Z_1 \tilde{\theta}_1/s^2}{n} - (\mu - \mu_2)   > \alpha_4 \tau_1 }.
\end{split} 
\end{align}
For the first term in the preceding display, use $\tau_1 \leq (1+ \mu_2 \abs{m_1}/n + \mu- \mu_2 ) \tau$ and $\abs{m_1}/n  + \mu_2 \abs{m_1}/n + \mu-\mu_2 \leq 1 + \mu_2 \abs{m_1}/n + \mu-\mu_2$ in the denominator followed by $\tau_1^2/(1 + \mu_2 \abs{m_1}/n +\mu - \mu_2) \geq (1- \abs{m_1}/n)\tau^2$ in the numerator to bound this term from above by
\begin{align} \label{eq:bound.rhohat.124}
& 2  \exp \lrr{ - n  \frac{ \alpha_2^2 (1-\abs{m_1}/n) \tau^2 }{4 \left( (\alpha_2 + 2 \alpha_3 + 2 \alpha_4) \tau +2 \right)} }.
\end{align}
Recall from \eqref{eq:tzp1} in Lemma~\ref{lem:dist} that $\theta'Z' P_1'Z\theta/s^2 \Vert Z_1 \sim \mu_2 \chi^2_{\abs{m_1}}(\tilde{\theta}_1' Z_1' Z_1 \tilde{\theta}_1/(s^2 \mu_2))$ and use Lemma~\ref{lem:chi2.balance} to bound the sum of the third term in \eqref{eq:bound.rhohat.1} and the second term in \eqref{eq:bound.rhohat.121} from above by
\begin{align}  \label{eq:bound.rhohat.126}
4 \E \lrs{ \exp \lrr{ - n \frac{ \alpha_3^2 \tau_1^2/\mu_2}{4( \alpha_3 \tau_1 + 2(\mu_2 \abs{m_1}/n + \tilde{\theta}_1' Z_1' Z_1 \tilde{\theta}_1/(s^2n))) } } }.
\end{align} 
Integrating over the event $\{ \tilde{\theta}_1' Z_1' Z_1 \tilde{\theta}_1/(s^2n) \leq \alpha_4 \tau_1 + \mu-\mu_2\}$ and its complement, using $\mu_2 \abs{m_1}/n + \mu-\mu_2 \leq 1 + \mu_2 \abs{m_1}/n + \mu-\mu_2$ and $\tau_1 \leq (1 + \mu_2 \abs{m_1}/n + \mu-\mu_2) \tau$ in the denominator followed by $\tau_1^2/(\mu_2 (1 + \mu_2 \abs{m_1}/n + \mu-\mu_2) ) \geq  \abs{m_1}/n(1-\abs{m_1}/n)^2 \tau^2$ in the numerator, we can bound the term in the preceding display from above by
\begin{align} \label{eq:bound.rhohat.125}
& 4 \exp \lrr{ - \abs{m_1} \frac{ \alpha_3^2 (1-\abs{m_1}/n)^2\tau^2}{4( (\alpha_3 + 2 \alpha_4) \tau + 2 ) } } + 4 \P \lrr{ \frac{\tilde{\theta}_1' Z_1' Z_1 \tilde{\theta}_1/s^2}{n} - (\mu - \mu_2)   > \alpha_4 \tau_1 }.
\end{align}
By \eqref{eq:tt1z1} in Lemma~\ref{lem:dist} we have that $\tilde{\theta}_1' Z_1' Z_1 \tilde{\theta}_1/s^2 \sim (\mu - \mu_2) \chi^2_n$. Using Lemma~\ref{lem:chi2} together with the fact that $\tau_1/(\mu-\mu_2) \geq \tau$ for the fourth term in \eqref{eq:bound.rhohat.1}, the third term in \eqref{eq:bound.rhohat.121} and the second term in \eqref{eq:bound.rhohat.125} and collecting the terms in \eqref{eq:bound.rhohat.124} and \eqref{eq:bound.rhohat.125}, we can finally bound the sum of the second, third and fourth term in \eqref{eq:bound.rhohat.1} from above by
\begin{align} 
\begin{split} \label{eq:bound.rhohat.122}
& 2  \exp \lrr{ - n  \frac{ \alpha_2^2 (1-\abs{m_1}/n) \tau^2 }{4 \left( (\alpha_2 + 2 \alpha_3 + 2 \alpha_4) \tau +2 \right)} } + 4 \exp \lrr{ - \abs{m_1} \frac{ \alpha_3^2 (1-\abs{m_1}/n)^2\tau^2}{4( (\alpha_3 + 2 \alpha_4) \tau + 2 ) } } \\
& + 8 \exp \lrr{ - n \frac{\alpha_4^2 \tau^2}{4( \alpha_4 \tau + 1)} }.
\end{split}
\end{align}
Recall from Lemma~\ref{lem:dist} that $Y'M_1 Y/s^2 \Vert Z \sim \chi^2_{n-\abs{m_1}}(\theta'Z'M_1Z \theta/s^2)$ and use Lemma~\ref{lem:chi2.balance} followed by integrating separately over the event $\{ \theta'Z'M_1Z \theta/(s^2(n - \abs{m_1})) \leq \alpha_6 \tau_2 + \mu_2 \}$ and its complement, we can bound the sum of the fifth and sixth term in \eqref{eq:bound.rhohat.1} from above by
\begin{align*} 
2 & \exp \lrr{- (n - \abs{m_1}) \frac{ \alpha_5^2 \tau_2^2}{4( (\alpha_5 + 2 \alpha_6)  \tau_2 +2(1+ \mu_2))} } \\ 
& + 2 \P \lrr{  \frac{\theta'Z' M_1 Z\theta/s^2 }{n-\abs{m_1}} -  \mu_2  > \alpha_6 \tau_2 } + \P \lrr{ \lrv{ \frac{\theta'Z' M_1 Z\theta/s^2 }{n-\abs{m_1}} -  \mu_2  } > \alpha_6 \tau_2 }.
\end{align*}
Plugging in the definition for $\tau_2$ and using $1+\mu_2 \geq 1$ for the first term, recalling from \eqref{eq:tzm1} in Lemma~\ref{lem:dist} that $\theta'Z' M_1 Z\theta/s^2 \sim \mu_2 \chi^2_{n-\abs{m_1} }$ and using that $ \alpha_6 \tau_2/\mu_2 \geq \alpha_6 (1-\abs{m_1}/n) \tau$ we can use Lemma~\ref{lem:chi2} to bound the sum in the preceding display from above by
\begin{align} 
\begin{split} \label{eq:bound.rhohat.123}
2 & \exp \lrr{- (n - \abs{m_1}) \frac{ \alpha_5^2 (1-\abs{m_1}/n)^2 \tau^2}{4( (\alpha_5 + 2 \alpha_6)(1-\abs{m_1}/n) \tau +2)} } \\ 
& + 4 \exp \lrr{ - (n - \abs{m_1}) \frac{\alpha_6^2 (1-\abs{m_1}/n)^2 \tau^2}{4(\alpha_6 (1-\abs{m_1}/n) \tau +1) } } .
\end{split}
\end{align}
Collecting the terms in \eqref{eq:bound.rhohat.11}, \eqref{eq:bound.rhohat.122} and \eqref{eq:bound.rhohat.123}, we can bound the sum in \eqref{eq:bound.rhohat.1} in the case where $\mu_2 >0$ and $\mu - \mu_2 > 0$ from above by
\begin{align} 
\begin{split} \label{eq:bound.rhohat.2}
& 2 \exp\lrr{ - (n - \abs{m}) \frac{\alpha_1^2(1-\abs{m}/n)^2 \tau^2}{ 4 (\alpha_1(1-\abs{m}/n) \tau +1)} } \\
& + 2  \exp \lrr{ - n  \frac{ \alpha_2^2 (1-\abs{m_1}/n) \tau^2 }{4 \left( (\alpha_2 + 2 \alpha_3 + 2 \alpha_4) \tau +2 \right)} } + 4 \exp \lrr{ - \abs{m_1} \frac{ \alpha_3^2 (1-\abs{m_1}/n)^2\tau^2}{4( (\alpha_3 + 2 \alpha_4) \tau + 2 ) } }  \\
& + 8 \exp \lrr{ - n \frac{\alpha_4^2 \tau^2}{4( \alpha_4 \tau + 1)} } + 2 \exp \lrr{- (n - \abs{m_1}) \frac{ \alpha_5^2 (1-\abs{m_1}/n)^2 \tau^2}{4( (\alpha_5 + 2 \alpha_6) (1-\abs{m_1}/n) \tau +2)} } \\ 
& + 4 \exp \lrr{ - (n - \abs{m_1}) \frac{\alpha_6^2 (1-\abs{m_1}/n)^2 \tau^2}{4(\alpha_6 (1-\abs{m_1}/n) \tau +1) } } .
\end{split}
\end{align}
In the case where $\mu_2 > 0$ and $ \mu - \mu_2 =0$, we have that $\tilde{\theta}_1=\mathbf{0}$ and that $\theta'Z'P_1 Z \theta/s^2 \Vert Z_1 \sim \mu_2 \chi^2_{\abs{m_1}}$. Hence, the fourth term in \eqref{eq:bound.rhohat.1} equals zero and we can bound the fifth and sixth term in \eqref{eq:bound.rhohat.1} as before. For the sum of the second and the third term in \eqref{eq:bound.rhohat.1}, we can use the bound in \eqref{eq:bound.rhohat.124} and the tail bound of the central chi-square distribution as in Lemma~\ref{lem:chi2} together with the fact that $\tau_1/(\mu_2 \abs{m_1}/n) \geq (1 - \abs{m_1}/n) \tau$ to bound the sum from above by
\begin{align*}
2  \exp \lrr{ - n  \frac{ \alpha_2^2 (1-\abs{m_1}/n) \tau^2 }{4 \left( (\alpha_2 + 2 \alpha_3 + 2 \alpha_4) \tau +2 \right)} } + 4 \exp \lrr{ - \abs{m_1} \frac{\alpha_3^2 (1- \abs{m_1}/n)^2 \tau^2}{4( \alpha_3 (1- \abs{m_1}/n) \tau + 1)} }. 
\end{align*}
This shows that we can use the bound in \eqref{eq:bound.rhohat.2} also in the case where $\mu_2 > 0$ and $ \mu - \mu_2 =0$. In the case where $\mu_2=0$ and $\mu - \mu_2 >0$, we have that $\theta_2=\mathbf{0}$, that $\theta'Z'P_1 Z \theta /s^2= \theta_1'Z_1'Z_1 \theta_1/s^2=\tilde{\theta}_1'Z_1'Z_1 \tilde{\theta}_1/s^2 \sim \mu \chi^2_n$, $\theta'Z'M_1 Z \theta = 0$ and that $Y'M_1Y/s^2 \Vert Z \sim \chi^2_{n - \abs{m_1}}$. Hence, the third and the sixth term in \eqref{eq:bound.rhohat.1} equals 0 and we can bound the sum of the second and fourth term in \eqref{eq:bound.rhohat.1} as before. Noting that $\tau_2=(1-\abs{m_1}/n) \tau$ here, we can use Lemma~\ref{lem:chi2} to bound the fifth term in \eqref{eq:bound.rhohat.1} from above by
\begin{align*}
2 \exp \lrr{ - (n - \abs{m_1}) \frac{\alpha_5^2 (1-\abs{m_1}/n)^2 \tau^2}{4( \alpha_5 (1-\abs{m_1}/n) \tau + 1)}}.
\end{align*}
This shows that we can use the bound in \eqref{eq:bound.rhohat.2} also in the case where $\mu_2 =0$ and $\mu - \mu_2>0$.  In the remaining case where $\mu_2=\mu - \mu_2=0$, we have that $\theta = \mathbf{0}$ as well as $Y'P_1 Y/s^2 \Vert Z \sim \chi^2_{\abs{m_1}}$ and $Y'M_1 Y/s^2 \Vert Z \sim \chi^2_{n - \abs{m_1}}$. The third, fourth and sixth term in \eqref{eq:bound.rhohat.1} equal 0 and we can bound the fifth term using the bound in the preceding display. Bound the second term in \eqref{eq:bound.rhohat.1} using the tail bound of the chi-square distribution as in Lemma~\ref{lem:chi2} from above by
\begin{align*}
2 \exp \lrr{ - n \frac{\alpha_2^2 (n/\abs{m_1}) \tau^2 }{4( \alpha_2 (n/\abs{m_1}) \tau + 1)}}.
\end{align*}
Hence, we can use the bound in \eqref{eq:bound.rhohat.2} also in this case.
Use the sum in \eqref{eq:bound.rhohat.2} with $\tau=\ed-1$ and that $\gamma(\exp(\delta)-1)+1 \leq \exp(\delta)$ for any $\gamma \in (0,1)$ and $(\alpha_i+2 \alpha_j)(\ed-1)+2 \leq (\alpha_i+2 \alpha_j+2 \alpha_k) (\ed-1)+2 \leq 2 \ed$ for any $i \neq j \neq k \in \{ 1, 2, 3, 4, 5, 6\}$ and that $1 \geq 1-\abs{m_1}/n \geq 1- \abs{m}/n$ to bound the left-hand side in \eqref{eq:bound.rhohat.pos} from above by
\begin{align}
\begin{split} \label{eq:bound.rhohat.pos.1}
2 &\exp\lrr{ - n \lrr{ 1 - \frac{\abs{m}}{n} }^3 \frac{\alpha_1^2(\ed-1)^2}{ 4 \ed } } \\
& + 2  \exp \lrr{ - n \lrr{ 1 - \frac{\abs{m}}{n} }^3 \frac{  \alpha_2^2(\ed-1)^2 }{8 \ed } } \\
& + 4 \exp \lrr{ - \abs{m_1} \lrr{ 1 - \frac{\abs{m}}{n} }^3 \frac{ \alpha_3^2 (\ed-1)^2}{8 \ed } } \\
& + 8 \exp \lrr{ - n \lrr{ 1 - \frac{\abs{m}}{n} }^3 \frac{ \alpha_4^2 (\ed-1)^2 }{4 \ed } }\\
& + 2 \exp \lrr{ - n \lrr{ 1 - \frac{\abs{m}}{n} }^3\frac{\alpha_5^2 (\ed-1)^2}{8 \ed } } \\
& + 4 \exp \lrr{ - n \lrr{ 1 - \frac{\abs{m}}{n} }^3 \frac{\alpha_6^2  (\ed-1)^2 }{4 \ed} }.
\end{split}
\end{align}
To balance the terms in the preceding display, we choose $\alpha_1=\alpha_4=\alpha_6=1/(3(1+\sqrt{2}))$ and $\alpha_2=\alpha_3=\alpha_5 = \sqrt{2}/(3(1+\sqrt{2}))$ which finally shows the statement in \eqref{eq:bound.rhohat.pos}.

To show the statement in \eqref{eq:bound.rhohat.neg}, use the bound in \eqref{eq:bound.rhohat.2} with $\tau=1-\emd =(\ed-1)/\ed$  and note that $\gamma(\exp(\delta)-1)+\ed \leq 2 \exp(\delta)$ for any $\gamma \in (0,1)$ and $(\alpha_i+2 \alpha_j)(\ed-1)+2 \ed \leq (\alpha_i+2 \alpha_j+2 \alpha_k) (\ed-1)+2 \ed \leq 4 \ed$ for any $i \neq j \neq k \in \{ 1, 2, 3, 4, 5, 6\}$ and that $1 \geq 1-\abs{m_1}/n \geq 1- \abs{m}/n$. Hence, we can bound the left hand-side in \eqref{eq:bound.rhohat.neg} from above by
\begin{align}
\begin{split} \label{eq:bound.rhohat.neg.1}
2 &\exp\lrr{ - n \lrr{ 1 - \frac{\abs{m}}{n} }^3 \frac{\alpha_1^2(\ed-1)^2}{ 8 \exp(2 \delta) } } \\
&+ 2  \exp \lrr{ - n \lrr{ 1 - \frac{\abs{m}}{n} }^3 \frac{  \alpha_2^2(\ed-1)^2 }{16 \exp(2 \delta)  } } \\
& + 4 \exp \lrr{ - \abs{m_1} \lrr{ 1 - \frac{\abs{m}}{n} }^3 \frac{ \alpha_3^2(\ed-1)^2}{16 \exp(2 \delta)  } } \\
& + 8 \exp \lrr{ - n \lrr{ 1 - \frac{\abs{m}}{n} }^3 \frac{ \alpha_4^2(\ed-1)^2 }{8 \exp(2 \delta)  } }\\
& + 2 \exp \lrr{ - n \lrr{ 1 - \frac{\abs{m}}{n} }^3\frac{\alpha_5^2  (\ed-1)^2}{16 \exp(2 \delta)  } } \\
& + 4 \exp \lrr{ - n \lrr{ 1 - \frac{\abs{m}}{n} }^3 \frac{\alpha_6^2 (\ed-1)^2 }{ 8 \exp(2 \delta) } }.
\end{split}
\end{align}
To balance the terms in the preceding display, we use the same weights as above which finally shows the statement in \eqref{eq:bound.rhohat.neg}.

\end{proof}

Alternatively, we have the following result where the upper bound depends on the quantity $\mu$!

\begin{appxlem} \label{lem:bound.rhohat.m}
For each $\delta>0$, we have
\begin{align} \label{eq:bound.rhohat.m.pos}
\P(\hat{\rho}^2(m)/r > \exp(\delta)) & \leq 22 \exp \lrr{- n \lrr{1- \frac{\abs{m}}{n}}^3 \frac{(\ed-1)^2}{210 \ed (1+\mu)} }  \\
\intertext{as well as} \label{eq:bound.rhohat.m.neg}
\P( \hat{\rho}^2(m)/r < \exp(-\delta)) & \leq 22 \exp \lrr{- n \lrr{1- \frac{\abs{m}}{n}}^3 \frac{(\ed-1)^2}{840 \exp(2 \delta)(1+\mu)} },
\end{align}
where 
\begin{align}
\begin{split} \label{eq:r.m}
r =  & s^2 \Bigg( (1-a_2)^2 \frac{\abs{m}}{n - \abs{m} +1} +(a_2-a_1)(2-a_1-a_2) \frac{\abs{m_1}}{n - \abs{m_1} +1} + 1 \\
& + a_1^2 ( \mu - \mu_2)  +  a_2^2  \mu_2 + (a_1 - a_2)^2 \frac{\abs{m_1}}{n - \abs{m_1}+1} \mu_2  \Bigg),
\end{split}
\end{align}
where $a_1$ and $a_2$ are the shrinkage factors as in \eqref{eq:a1a2} and where $\mu=\theta'S\theta/ s^2$ and $\mu_2 = \theta_2'(S_{2,2} - S_{2,1} S_{1,1}^{-1} S_{1,2})\theta_2/s^2$.
\end{appxlem}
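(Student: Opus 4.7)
The plan is to adapt the proof of Lemma \ref{lem:bound.rhohat} so as to exhibit the factor $1/(1+\mu)$ in the exponent (in place of the factor $\abs{m_1}/n$). Decompose $\hat\rho^2(m)/r - 1$ exactly as in \eqref{eq:bound.rhohat.1}, writing it as a linear combination of $\hat{s}^2/s^2-1$ (with coefficient $r_1$), the centred version of $Y'P_1 Y/(s^2 n)$ (coefficient $r_2$), and the centred version of $Y'M_1 Y/(s^2(n-\abs{m_1}))$ (coefficient $r_3$). By Lemma \ref{lem:prop}(i), these coefficients satisfy $|r_1| \leq (1-\abs{m}/n)^{-1}$, $|r_2| \leq 1$, and $|r_3| \leq (1-\abs{m_1}/n)^{-1}$. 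A union bound over the same six tail events considered in the original proof reduces the task to six individual tail bounds.

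The key departure from the proof of Lemma \ref{lem:bound.rhohat} is the choice of thresholds. Whereas the original proof scales $\tau_1$ and $\tau_2$ by $(1+\mu_2\abs{m_1}/n(1-\abs{m_1}/n)+\mu-\mu_2)$ and $(1-\abs{m_1}/n)(1+\mu_2)$ respectively (which produces the unwanted factor $\abs{m_1}$), here we exploit the simpler bounds $|r_2| \leq 1$ and $|r_3| \leq (1-\abs{m_1}/n)^{-1}$, so that each of the six events can be stated with threshold proportional to $\tau = \ed - 1$ alone. When applying Lemma \ref{lem:chi2.balance} conditionally on $Z$ (or on $Z_1$ for the intermediate quantities), the denominator of the Chernoff-type exponent is of order $\abs{m_1}/n+\theta'Z'P_1Z\theta/(s^2 n)$ for the $Y'P_1 Y$ term, and analogously for the $Y'M_1 Y$ term. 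Integrating over the high-probability events where $\theta'Z'P_1 Z\theta/(s^2 n)$ and $\theta'Z'M_1 Z\theta/(s^2 n)$ concentrate (controlled in turn by Lemma \ref{lem:chi2} applied to the central chi-square variables $\mu_2\chi^2_{\abs{m_1}}$, $\mu_2\chi^2_{n-\abs{m_1}}$, and $(\mu-\mu_2)\chi^2_n$ that arise in \eqref{eq:tzp1}, \eqref{eq:tzm1}, \eqref{eq:tt1z1}), these denominators are bounded by a constant multiple of $1+\mu$; this is precisely what places the factor $1/(1+\mu)$ into the exponent.

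The six resulting exponential bounds are all of the form $\exp(-n(1-\abs{m}/n)^3 \tau^2\cdot \text{const}/(1+\mu))$ (respectively with $\tau=1-\emd$ and an extra factor of $\exp(2\delta)$ for the lower-tail bound \eqref{eq:bound.rhohat.m.neg}). Balancing the weights $\alpha_1,\dots,\alpha_6$ with the same convex combination used in the proof of Lemma \ref{lem:bound.rhohat} yields the target denominators $210\,\ed$ and $840\exp(2\delta)$. The edge cases $\mu_2=0$ and $\mu-\mu_2=0$ are treated separately, as in the original proof, since the corresponding noncentrality parameters vanish and the chi-square bounds simplify.

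The main obstacle will be the bookkeeping: one must absorb the separately appearing quantities $\mu_2$ and $\mu-\mu_2$ into the common factor $1+\mu$ without losing the correct power $(1-\abs{m}/n)^3$ that enters through the application of Lemma \ref{lem:chi2.balance} and the truncation of the noncentrality parameters.
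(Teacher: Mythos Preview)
Your proposal contains a genuine gap in the treatment of the third of the six events, namely the concentration of $\theta'Z'P_1Z\theta/(s^2 n)$ around $\mu_2\abs{m_1}/n + \tilde\theta_1'Z_1'Z_1\tilde\theta_1/(s^2 n)$. If you replace the threshold $\alpha_3\tau_1$ by $\alpha_3\tau$ as you propose (using only $|r_2|\le 1$), then the Chernoff bound from Lemma~\ref{lem:chi2.balance} applied to the scaled noncentral variable $\mu_2\chi^2_{\abs{m_1}}(\cdot)$ produces an exponent whose numerator carries a factor $\tau^2/\mu_2$ while the denominator, after truncating the noncentrality, is of order $1+\mu$. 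The resulting rate is therefore of order $n\tau^2/(\mu_2(1+\mu))$, which for $\mu_2$ close to $\mu$ degrades to $n\tau^2/(1+\mu)^2$ rather than the required $n\tau^2/(1+\mu)$. (The same issue arises already in the special case $\mu-\mu_2=0$, where the variable is the central $\mu_2\chi^2_{\abs{m_1}}$: with threshold $\alpha_3\tau$ the chi-square tail bound yields an exponent of order $n^2\tau^2/(\abs{m_1}\mu_2^2)$, which does not dominate $n\tau^2/(1+\mu)$ once $\mu\gg n/\abs{m_1}$.) Your claim that ``these denominators are bounded by a constant multiple of $1+\mu$'' overlooks the extra $\mu_2$ coming from the scale of the chi-square.

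The paper's proof avoids this precisely by \emph{retaining} the thresholds $\tau_1$ and $\tau_2$ from Lemma~\ref{lem:bound.rhohat}: all of the bounds in \eqref{eq:bound.rhohat.2} are reused verbatim except the third. For that one term the paper keeps $\tau_1=(1+\mu_2\tfrac{\abs{m_1}}{n}(1-\tfrac{\abs{m_1}}{n})+\mu-\mu_2)\tau$, so that one factor of $\tau_1$ in the numerator of the Chernoff exponent cancels the $(1+\mu)$-sized denominator, and then the remaining ratio $\tau_1/\mu_2$ is bounded below using $\tau_1^2/\bigl(\mu_2(1+\mu_2\abs{m_1}/n+\mu-\mu_2)\bigr)\ge \tau^2(1-\abs{m_1}/n)/(1+\mu_2)$. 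This single substitution yields \eqref{eq:bound.rhohat.m.2} with $1/(1+\mu_2)\ge 1/(1+\mu)$ in the exponent, after which the same weights as in Lemma~\ref{lem:bound.rhohat} give the constants $210$ and $840$. In short, the trick is not to simplify the thresholds but to keep them and rebalance only the one inequality that previously introduced the factor $\abs{m_1}/n$.
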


\begin{proof}
Use the bound in \eqref{eq:bound.rhohat.2} in the proof of Lemma \ref{lem:bound.rhohat} except for the third term. This term originates from bounding the sum
\begin{align} 
\begin{split} \label{eq:bound.rhohat.m.1}
& \P \lrr{  \lrv{\frac{\theta'Z' P_1 Z\theta/s^2}{n} - \mu_2 \frac{\abs{m_1}}{n} - \frac{\tilde{\theta}_1' Z_1' Z_1 \tilde{\theta}_1/s^2}{n} }  > \alpha_3 \tau_1} \\
&+ 2  \P \lrr{  \frac{\theta'Z' P_1 Z\theta/s^2}{n} - \mu_2 \frac{\abs{m_1}}{n} - \frac{\tilde{\theta}_1' Z_1' Z_1 \tilde{\theta}_1/s^2}{n}  > \alpha_3 \tau_1}.
\end{split}
\end{align}
In the case where $\mu_2 >0$ and $\mu - \mu_2 >0$, bound the sum in the preceding display from above by using the bound in \eqref{eq:bound.rhohat.126} followed by integrating over the event $\{\tilde{\theta}_1' Z_1' Z_1 \tilde{\theta}_1/(s^2n) \leq \alpha_4 \tau_1 + \mu - \mu_2 \}$ and its complement to get
\begin{align*}
& 4 \exp \lrr{ - n \frac{\alpha_3^2 \tau_1^2/\mu_2}{4( (\alpha_3 + 2 \alpha_4) \tau_1 + 2( \mu_2 \abs{m_1}/n + \mu - \mu_2) ) }} \\
& + 4 \P \lrr{ \frac{\tilde{\theta}_1' Z_1' Z_1 \tilde{\theta}_1/s^2 }{n} - (\mu - \mu_2) > \alpha_4 \tau_1 }.
\end{align*}
Bound the second term in the preceding display as in the proof of Lemma~\ref{lem:bound.rhohat} (cf. the bound in \eqref{eq:bound.rhohat.125}). For the first term use $\tau_1 \leq (1 + \mu_2 \abs{m_1}/n +\mu-\mu_2) \tau$ and $\mu_2 \abs{m_1}/n + \mu - \mu_2 \leq 1+ \mu_2 \abs{m_1}/n + \mu - \mu_2$ in the denominator followed by $\tau_1^2/(\mu_2(1 + \mu_2 \abs{m_1}/n + \mu-\mu_2 )) \geq \tau^2 (1-\abs{m_1}/n)/(1+ \mu_2) $ in the numerator to bound it from above by
\begin{align} \label{eq:bound.rhohat.m.2}
4 \exp \lrr{ - n \frac{\alpha_3^2 (1- \abs{m_1}/n) \tau^2}{4( (\alpha_3 + 2 \alpha_4) \tau + 2)(1+\mu_2) }}.
\end{align}
Bound the sum in \eqref{eq:bound.rhohat.m.1} in the case where $\mu_2>0$ and $\mu - \mu_2=0$ recalling that $\tilde{\theta}_1=\mathbf{0}$ and that $\theta' Z' P_1 Z \theta /s^2 \Vert Z_1 \sim \mu_2 \chi^2_{\abs{m_1}}$ and using Lemma~\ref{lem:chi2} from above by
\begin{gather*}
4 \exp \lrr{ - n \frac{\alpha_3  \tau_1^2/\mu_2 }{4(\alpha_3 \tau_1 +  \mu_2 \abs{m_1}/n)} }.
\end{gather*}
Using $\alpha_3 \tau_1 +  \mu_2 \abs{m_1}/n \leq (\alpha_3 \tau + 1)(1+ \mu_2 \abs{m_1}/n) \leq ((\alpha_3 + 2 \alpha_4) \tau + 2)(1+ \mu_2 \abs{m_1}/n)$ in the denominator followed by $\tau_1^2/(\mu_2 (1+ \mu_2 \abs{m_1}/n)) \geq \tau^2 (1-\abs{m_1}/n)/(1+\mu_2)$ in the numerator, we can bound the term in the preceding display from above by the term in \eqref{eq:bound.rhohat.m.2}. In the case where $\mu_2=0$, the sum in \eqref{eq:bound.rhohat.m.1} equals 0 and can trivially by bounded from above by the term in \eqref{eq:bound.rhohat.m.2}.

To show the statements in \eqref{eq:bound.rhohat.m.pos} and \eqref{eq:bound.rhohat.m.neg}, we can use the bound in \eqref{eq:bound.rhohat.2} where we replace the third term by the term in \eqref{eq:bound.rhohat.m.2}. Setting $\tau = \ed - 1$ and using $(\alpha_3+2 \alpha_4) (\ed-1) + 2 \leq 2 \ed$ and $1- \abs{m_1}/n \geq (1- \abs{m}/n)^3$ for the term in \eqref{eq:bound.rhohat.m.2}, we can bound the left-hand side in \eqref{eq:bound.rhohat.m.pos} from above by the sum in \eqref{eq:bound.rhohat.2} where we replace the third term by
\begin{align*}
4 \exp \lrr{ - n \lrr{1- \frac{\abs{m}}{n}}^3 \frac{\alpha_3^2 (\ed-1)^2}{8 \ed (1+\mu_2) }}.
\end{align*}
Choosing the weights as in the proof of Lemma \ref{lem:bound.rhohat} and using the fact that $1 \leq 1+\mu_2 \leq 1+\mu$ gives the result. To bound the left-hand side in \eqref{eq:bound.rhohat.m.neg}, set $\tau =1-\emd =(\ed-1)/\ed$ and use $(\alpha_3 + 2 \alpha_4)(\ed-1) + 2 \ed \leq 4 \ed$ and $1- \abs{m_1}/n \geq (1- \abs{m}/n)^3$ in \eqref{eq:bound.rhohat.m.2}. Hence, we can use the upper bound in \eqref{eq:bound.rhohat.neg.1} where we replace the third term by 
\begin{align*}
4 \exp \lrr{ - n \lrr{1- \frac{\abs{m}}{n}}^3 \frac{\alpha_3^2 (\ed-1)^2}{16 \ed (1+\mu_2) }}.
\end{align*}
Using the same weights as in the proof of Lemma~\ref{lem:bound.rhohat} together with the fact that $1 \leq 1+\mu_2 \leq 1+\mu$ gives the result.
\end{proof}

\begin{appxlem} \label{lem:bound.rho}
For each $\delta>0$, we have
\begin{align} \label{eq:bound.rho.pos}
\P(\rho^2(m)/r > \exp(\delta))  & \leq (58+ 2\abs{m_1} ) \exp \lrr{ - \abs{m_1} \frac{\abs{m_1}}{n} \lrr{1 - \frac{\abs{m}}{n}} ^5 \frac{(\ed-1)^2}{10477\exp(2 \delta)} } 
\intertext{as well as} \label{eq:bound.rho.neg}
\P( \rho^2(m)/r < \exp(-\delta)) & \leq (58 + 2 \abs{m_1}) \exp \lrr{ - \abs{m_1} \frac{\abs{m_1}}{n} \lrr{1 - \frac{\abs{m}}{n}} ^5 \frac{(\ed-1)^2}{13089\exp(2 \delta)} } ,
\end{align}
where $r$ was defined in \eqref{eq:r} in Lemma~\ref{lem:bound.rhohat}. 
\end{appxlem}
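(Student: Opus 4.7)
The plan is to mimic the proof of Lemma~\ref{lem:bound.rhohat}, but starting from the explicit expansion of $\rho^2(m)$ in \eqref{eq:pe.5}. The key observation is that $r$ as defined in \eqref{eq:r} is precisely what one obtains from \eqref{eq:pe.5} upon (i) replacing each of the three random quadratic forms on lines 1--3 by its deterministic concentration center and (ii) dropping the seven cross-product terms on lines 5--11, all of which have conditional expectation zero. I will therefore write $\rho^2(m)-r$ as the corresponding sum of ten centered pieces, split $\{\abs{\rho^2(m)/r - 1}\ge\tau\}$ with $\tau=\ed-1$ (respectively $\tau=1-\emd$ for the lower tail) into ten sub-events via weights $\alpha_1,\dots,\alpha_{10}\in(0,1)$ summing to one, and bound each sub-event separately.

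For the three diagonal quadratic forms, Lemma~\ref{lem:dist}(i) reduces the first two to $s^2$ times a ratio of independent central chi-squares, and Lemma~\ref{lem:chi2} applied separately to numerator and denominator gives exponential concentration around $s^2\abs{m}/(n-\abs{m}+1)$ and $s^2\abs{m_1}/(n-\abs{m_1}+1)$ respectively. The third term is the subtle one and is the bottleneck for the final rate: conditional on $Z_1$ it is a quadratic form in the Gaussian vector $\tilde Z_2\theta_2 \sim N(0, b_2 I_n)$ by Lemma~\ref{lem:dist}(iii), with conditional mean $b_2\trace(S_{1,1}(Z_1'Z_1)^{-1})$ and operator norm bounded by $b_2\lambda_{\abs{m_1}}(S_{1,1}(Z_1'Z_1)^{-1})$. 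Corollary~\ref{cor:bound.quadraticform} handles the Gaussian-conditional deviation, Lemma~\ref{lem:bound.trace} replaces the random trace by its deterministic proxy $\abs{m_1}/(n-\abs{m_1}+1)$ (and is the source of the $2\abs{m_1}$ prefactor in the final bound), and Lemma~\ref{lem:eigenvalues}, applied to the Wishart matrix $(Z_1 S_{1,1}^{-1/2})'(Z_1 S_{1,1}^{-1/2})\sim W_{\abs{m_1}}(I_{\abs{m_1}},n)$, controls the operator norm. Throughout, the coefficient bounds \eqref{eq:ineq1}--\eqref{eq:ineq3} of Lemma~\ref{lem:prop}(i) keep each normalized contribution to $\rho^2(m)/r$ at most a clean factor times the raw deviation.

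The seven zero-mean cross-product terms on lines 5--11 of \eqref{eq:pe.5} split into two types. Those that are conditionally linear in one of $\hat\theta_1^{*}-\theta_1^{*}$, $\hat\theta_2-\theta_2$, or $\tilde Z_2\theta_2$ (after conditioning on $Z$ and on the other factors) become centered Gaussians, with variances controlled via Lemma~\ref{lem:dist}(iii)--(v), Lemma~\ref{lem:bound.trace}, and Lemma~\ref{lem:eigenvalues}, and are then handled by Lemma~\ref{lem:nd}. The doubly bilinear term on line 11 splits, after conditioning on $Z$, into a Gaussian part plus a traceless quadratic form in $w$ (zero trace because $M_1 Z_1=0$), handled by Lemma~\ref{lem:nd} and Corollary~\ref{cor:qf.traceless0} respectively. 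In every case, \eqref{eq:ineq1}--\eqref{eq:ineq3} take care of the associated coefficient ratios.

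The main obstacle, and the source of the $(1-\abs{m}/n)^5$ factor as well as the $\abs{m_1}^2/n$ rate in the final bound, is collapsing the many disparate exponential rates produced by the above steps into a single clean expression. Each piece contributes a rate of the general form $n\,\eps^2(1-\sqrt{k/n})^a(1-k/n)^b$ for some $k\in\{\abs{m_1},\abs{m}\}$ and small integers $a,b$; the inequalities \eqref{eq:prop1}--\eqref{eq:propxy5} of Lemma~\ref{lem:prop}(ii) are precisely the tool that converts such products into the single factor $(1-\abs{m}/n)^5$. The $\abs{m_1}^2/n$ rate itself comes from the random-matrix step: the Gaussian-conditional concentration of the quadratic form gives a rate of order $\abs{m_1}$ via $\trace(A)^2/(\trace(A)\lambda_{\max}(A))$, but the relative scale $r/((a_1-a_2)^2 b_2 \abs{m_1}/(n-\abs{m_1}+1))$ from the weight bound \eqref{eq:ineq3} contributes another factor of $\abs{m_1}/n$. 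After balancing the weights $\alpha_i$ and absorbing the $\exp(2\delta)$ denominators that arise from $\tau=(\ed-1)/\ed$ in the lower-tail analysis, one obtains the constants $10477$ and $13089$, and a union bound over the ten sub-events yields the prefactor $58+2\abs{m_1}$.
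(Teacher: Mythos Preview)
Your overall strategy is exactly that of the paper: expand $\rho^2(m)$ via \eqref{eq:pe.5}, center against $r$, split into a sum of pieces with weights $\alpha_i$, and bound each piece using Lemma~\ref{lem:dist}, Corollary~\ref{cor:bound.quadraticform}, Lemma~\ref{lem:bound.trace}, Lemma~\ref{lem:nd}, Corollary~\ref{cor:qf.traceless0}, and Lemma~\ref{lem:eigenvalues}, then consolidate with Lemma~\ref{lem:prop}(ii). Your identification of the third quadratic form as the bottleneck producing the $\abs{m_1}^2/n$ rate and the $2\abs{m_1}$ prefactor is correct.

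Two details are slightly off. First, the coefficient ratios $r_i$ are \emph{not} handled by \eqref{eq:ineq1}--\eqref{eq:ineq3}; those inequalities are tailored to the weights $w_1,w_2,w_3$ of $\hat\rho^2(m)$ and are used only in Lemma~\ref{lem:bound.rhohat}. Here the paper instead uses the crude lower bounds $r\ge s^2\bigl(1+a_1^2(\mu-\mu_2)+a_2^2\mu_2+(a_1-a_2)^2\tfrac{\abs{m_1}}{n-\abs{m_1}+1}\mu_2\bigr)\ge s^2$ together with elementary inequalities of the form $2xy\le 1+x^2+y^2$ to bound each $\abs{r_i}$; for instance $\abs{r_3}\le\bigl(s^2\mu_2\abs{m_1}/(n-\abs{m_1}+1)\bigr)^{-1}$ and $\abs{r_5}\le\bigl(s^2\sqrt{\mu_2(\mu-\mu_2)\abs{m_1}/(n-\abs{m_1}+1)}\bigr)^{-1}$. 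Second, the bilinear term on line~11 does not split into a Gaussian part and a quadratic form; writing $\hat\theta_1^{*}-\theta_1^{*}=(Z_1'Z_1)^{-1}Z_1'w$ and $\hat\theta_2-\theta_2=(Z_2'M_1Z_2)^{-1}Z_2'M_1w$ shows that conditional on $Z$ it is purely $w'Qw$ with $\trace(Q)=0$ and $QQ=\mathbf{0}$, so Corollary~\ref{cor:qf.traceless0} alone suffices. With these corrections (and the paper's split of the third term into $T_3$ and $T_4$, giving eleven pieces rather than ten), your outline matches the paper's proof.
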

\begin{proof}
Recall the expansion of the true prediction error
\begin{align}
\begin{split} \label{eq:bound.rho.rho}
\rho^2(m) & = (1-a_2)^2(\hat{\theta}-\theta)'S(\hat{\theta} - \theta) \\
& \phantom{=} + (a_2 - a_1)(2 - a_1-a_2)(\hat{\theta}_1^* - \theta_1^*)'S_{1,1} (\hat{\theta}_1^* - \theta_1^*)   \\
&  \phantom{=} + (a_2-a_1)^2  \theta_2' \tilde{Z}_2'Z_1(Z_1'Z_1)^{-1}  S_{1,1} (Z_1'Z_1)^{-1} Z_1' \tilde{Z}_2 \theta_2\\
& \phantom{=}  + 2 a_1(a_1-a_2) (S_{1,1} \theta_1+S_{1,2} \theta_2)'(Z_1'Z_1)^{-1} Z_1'\tilde{Z}_2 \theta_2  \\
& \phantom{=} + 2a_1(a_1 - 1) (\hat{\theta}_1^* - \theta_1^*)'(S_{1,1} \theta_1 + S_{1,2} \theta_2 ) \\
& \phantom{=} + 2(1-a_1)(a_2-a_1) (\hat{\theta}_1^* - \theta_1^*)'S_{1,1} (Z_1'Z_1)^{-1} Z_1' \tilde{Z}_2 \theta_2 \\
& \phantom{=} +2a_2(a_2-1) (\hat{\theta}_2 - \theta_2)'(S_{2,2} -S_{2,1} S_{1,1}^{-1} S_{1,2})\theta_2\\
& \phantom{=}+ 2 a_1(1-a_2)  (\hat{\theta}_2 - \theta_2)' \tilde{Z}_2'Z_1(Z_1'Z_1)^{-1} (S_{1,1} \theta_1 + S_{1,2} \theta_2 ) \\
& \phantom{=} + 2(1-a_2)(a_1-a_2) (\hat{\theta}_2-\theta_2)' \tilde{Z}_2'Z_1 (Z_1'Z_1)^{-1} S_{1,1}(Z_1'Z_1)^{-1} Z_1'\tilde{Z}_2 \theta_2 \\
& \phantom{=} + 2(1-a_2)(a_1 - a_2) (\hat{\theta}_1^* - \theta_1^*)'S_{1,1} (Z_1'Z_1)^{-1} Z_1' \tilde{Z}_2 (\hat{\theta}_2 - \theta_2)\\
& \phantom{=}+  s^2 + a_1^2  (\theta'S\theta - \theta_2'(S_{2,2} - S_{2,1} S_{1,1}^{-1} S_{1,2}) \theta_2) + a_2^2 \theta_2'(S_{2,2} - S_{2,1} S_{1,1}^{-1} S_{1,2}) \theta_2,
\end{split}
\end{align}
where $a_1$ and $a_2$ are the shrinkage factors as in \eqref{eq:a1a2}. As before let $\mu=\theta'S\theta/s^2$ and $\mu_2=\theta_2(S_{2,2} - S_{2,1} S_{1,1}^{-1} S_{1,2})\theta_2/s^2$ and recall that $\mu - \mu_2= \tilde{\theta}_1'S_{1,1}\tilde{\theta}_1$ with $\tilde{\theta}_1= \theta_1+S_{1,1}^{-1} S_{1,2} \theta_2$. Rewrite the sum of the first three lines and the last line in the previous display as
\begin{align}
\begin{split} \label{eq:bound.rho.1}
 & (1-a_2)^2 \lrs{ (\hat{\theta}-\theta)'S(\hat{\theta} - \theta) - s^2 \frac{\abs{m}}{n - \abs{m} +1} }\\
&+ (a_2 - a_1)(2 - a_1 - a_2) \lrs{ (\hat{\theta}_1^* - \theta_1^*)'S_{1,1} (\hat{\theta}_1^* - \theta_1^*) - s^2 \frac{\abs{m_1}}{ n - \abs{m_1} + 1} }\\
& + (a_2-a_1)^2 \lrs{ \theta_2' \tilde{Z}_2' Z_1(Z_1'Z_1)^{-1}  S_{1,1} (Z_1'Z_1)^{-1} Z_1' \tilde{Z}_2 \theta_2  - s^2 \mu_2  \trace \lrr{ S_{1,1} (Z_1'Z_1)^{-1} } } \\
& + (a_2-a_1)^2  \lrs{s^2 \mu_2 \trace \lrr{S_{1,1}(Z_1'Z_1)^{-1}} - s^2 \mu_2 \frac{\abs{m_1}}{ n - \abs{m_1} +1} } + r.
\end{split}
\end{align}
Hence, the term $\rho^2(m)/r$ is of the form
\begin{align*}
\sum_{i=i}^{11} r_i T_i + 1,
\end{align*}
where the $r_i$'s involve the terms $a_1$, $a_2$ and $r$, e.g., $r_1=(1 - a_2)^2/r$, where $T_1, \ldots, T_4$ are the random variables in squared brackets in display \eqref{eq:bound.rho.1} and $T_5, \ldots, T_{11}$ are the random variables in the fourth up to the tenth line in \eqref{eq:bound.rho.rho}, i.e., the part involving $\theta$, $Z$ and $S$, e.g., $T_5=(S_{1,1} \theta_1+S_{1,2} \theta_2)'(Z_1'Z_1)^{-1} Z_1'\tilde{Z}_2 \theta_2$. Using this notation and setting $\tau=\ed-1$, we can bound the left-hand side in \eqref{eq:bound.rho.pos} from above by
\begin{align}
\label{eq:bound.rho.2}
\sum_{i=i}^{11} \P \lrr{ \abs{r_i} \abs{ T_i} > \alpha_i \tau},
\end{align}
where $\alpha_i \in (0,1)$, $i =1, \ldots, 11$ with $\sum_{i=1}^{11} \alpha_i =1$. The left-hand side in \eqref{eq:bound.rho.neg} is bounded from above by the same upper bound with $\tau=1-\emd$. Note that 
\begin{align}
\begin{split} \label{eq:bound.r}
r & \geq s^2 \lrr{ 1+a_1^2 (\mu-\mu_2) + a_2^2 \mu_2 + (a_1-a_2)^2 \frac{\abs{m_1}}{n - \abs{m_1}+1} \mu_2 } \\
&  \geq s^2 \lrr{ 1+(a_1-a_2)^2 \frac{\abs{m_1}}{n - \abs{m_1}+1} \mu_2 }\geq s^2
\end{split}
\end{align}
Using the third inequality for $r$ and recalling that $a_1 \in [0,1]$ and $a_2 \in [0,1]$, we see that $\abs{r_1} \leq 1/s^2$, $\abs{r_2} \leq 1/s^2$ as well as $\abs{r_{11}} \leq 2/s^2$. For $\mu_2 > 0$ and $\mu - \mu_2 >0$, use the second inequality for $r$ to conclude that $ \abs{r_3} = \abs{r_4} \leq (s^2 \mu_2 \abs{m_1}/(n - \abs{m}_1+1))^{-1}$, and use the first inequality for $r$ to conclude that $ \abs{r_5} \leq ( s^2 \sqrt{\mu_2(\mu-\mu_2) \abs{m_1}/(n-\abs{m_1} +1)} )^{-1}$, that $ \abs{r_6} \leq (s^2\sqrt{\mu-\mu_2})^{-1}$, that $ \abs{r_7} \leq (s^2\sqrt{\mu_2 \abs{m_1}/(n - \abs{m_1} +1 )})^{-1}$, that $ \abs{r_8} \leq (s^2 \sqrt{\mu_2} )^{-1}$, that $\abs{r_9} \leq (s^2\sqrt{\mu-\mu_2})^{-1}$ and that $\abs{r_{10}}\leq (s^2\sqrt{\mu_2 \abs{m_1}/(n - \abs{m_1} +1 )})^{-1}$.\footnote{The inequalities can be rewritten to have the form $2x y \leq 1+x^2+y^2+z$ where $z \geq 0$.} In the case where $\mu_2=0$ it follows that $\theta_2=\mathbf{0}$, i.e., the zero vector, which implies that $T_3=T_4=T_5=T_7=T_8=T_{10}=0$. In the case where $\mu - \mu_2 = 0$, it follows that $\tilde{\theta}_1=\mathbf{0}$ as well as $S_{1,1}\tilde{\theta}_1 = S_{1,1} \theta_1 +S_{1,2} \theta_2=\mathbf{0}$ and hence that $T_5=T_6=T_9=0$. In these cases the corresponding terms in \eqref{eq:bound.rho.2} equal 0 and can be trivially bounded from above by the bounds for the case $\mu_2 > 0$ and $\mu - \mu_2 > 0$, respectively, that are derived in the next paragraphs. Hence, from now on we assume that $\mu_2 > 0$ and $\mu - \mu_2>0$. \\
Use Lemma \ref{lem:dist} together with the upper bounds of $\abs{r_1}$ and $\abs{r_2}$ and the Chernoff bounds as in Lemma A.1 and Lemma A.3 (i) in \cite{leeb08} and the bound in Lemma C.8 (i) in \cite{huber13} to bound the sum of the first and the second term in \eqref{eq:bound.rho.2} from above by 
\begin{align} 
\begin{split} \label{eq:bound.rho.21}
2 &\exp \lrr{ - (n - \abs{m} +1)  \frac{ \alpha_1^2 (1-\abs{m}/(n+1))^2 \tau^2}{4( \alpha_1 (1-\abs{m}/(n+1)) \tau +1 )^2} }\\
& + 2 \exp \lrr{ - (n - \abs{m_1}+1)  \frac{ \alpha_2^2(1-\abs{m_1}/(n+1))^2 \tau^2}{4( \alpha_2 (1-\abs{m_1}/(n+1)) \tau +1 )^2} }.
\end{split}
\end{align}
Lemma \ref{lem:dist} shows that $\tilde{Z}_2 \theta_2/(s \sqrt{\mu_2}) \Vert Z_1 \sim N(0, I_n)$. Set $V_1 = Z_1 S_{1,1}^{-1/2}$ and note that $n \lambda_n(Z_1(Z_1'Z_1)^{-1}  S_{1,1} (Z_1'Z_1)^{-1} Z_1' )=\lambda_1^{-1}(V_1'V_1/n)$ and that $\trace(S_{1,1} (Z_1'Z_1)^{-1}) = \trace((V_1'V_1)^{-1})$. Hence, we can use Corollary \ref{cor:bound.quadraticform} conditional on $Z_1$ together with the upper bound on $\abs{r_3}$ to bound the third term in \eqref{eq:bound.rho.2} from above by
\begin{align*}
2 \E \lrs{ \exp \lrr{ - n \frac{\alpha_3^2\abs{m_1}^2/(n - \abs{m_1}+1)^2 \tau^2}{4 \lambda_1^{-1}(V_1'V_1/n)( \alpha_3\abs{m_1}/(n - \abs{m_1} +1)\tau + \lambda_1^{-1}(V_1'V_1/n) )} } } .
\end{align*}
Note that the term in the preceding display is nonincreasing in $\lambda_1(V_1'V_1/n)$ and that $V_1$ is a $n \times \abs{m_1}$ matrix that has i.i.d.\  standard normally distributed entries. Let 
\begin{align*}
A_1 &= \{ \lambda_1(V_1'V_1/n) > \gamma_1^2(1-\sqrt{\abs{m_1}/n})^2\}
\end{align*}
for some $\gamma_1 \in (0,1)$. Integrate the term in the second-to-last display separately over $A_1$ and its complement, note that the integrand is bounded from above by 1 and use Lemma~\ref{lem:eigenvalues} on the complement of $A_1$ to bound the third term in \eqref{eq:bound.rho.2} finally from above by
\begin{align} 
\begin{split} \label{eq:bound.rho.22}
& 2  \exp \lrr{ - \abs{m_1}  \frac{\abs{m_1}}{n} \frac{\alpha_3^2 \gamma_1^4 n^2 /(n - \abs{m_1}+1)^2(1-\sqrt{\abs{m_1}/n})^4 \tau^2}{4 ( \alpha_3 \gamma_1^2 \abs{m_1}/(n - \abs{m_1}+1)(1-\sqrt{\abs{m_1}/n})^2 \tau + 1) } }  \\
& + 2 \exp \lrr{ - n \frac{(1-\gamma_1)^2 (1- \sqrt{\abs{m_1}/n})^2}{2} }.
\end{split}
\end{align}
Bound the fourth term in \eqref{eq:bound.rho.2} noting that $\trace(S_{1,1} (Z_1'Z_1)^{-1}) = \trace((V_1'V_1)^{-1})$ and using \eqref{eq:bound.trace.rel} in Lemma \ref{lem:bound.trace} together with the upper bound of $\abs{r_4}$ from above by
\begin{align*}
2  \abs{m_1} \exp \lrr{ - (n - \abs{m_1}) \frac{\alpha_4^2\tau^2 }{8( \alpha_4\tau + 1)^2 } }.
\end{align*}
Use \eqref{eq:nd1} in Lemma \ref{lem:dist} to show that $T_5 \Vert Z_1 \sim N(0, s^2 \mu_2 (S_{1,1} \theta_1+S_{1,2} \theta_2)'(Z_1'Z_1)^{-1} (S_{1,1} \theta_1+S_{1,2} \theta_2))$. Use Lemma~\ref{lem:nd}, conditional on $Z_1$, together with the upper bound of $\abs{r_5}$ to bound the fifth term in \eqref{eq:bound.rho.2} from above by
\begin{gather*}
2 \E \lrs{ \exp \lrr{ - \abs{m_1}  \frac{ \alpha_5^2 s^2 (\mu-\mu_2) n/(n - \abs{m_1}+1) \tau^2 }{2 (S_{1,1} \theta_1+S_{1,2} \theta_2)'(Z_1'Z_1/n)^{-1} (S_{1,1} \theta_1+S_{1,2} \theta_2) }} }.
\end{gather*}
We can bound the term in the denominator in the preceding display from above by $ 2s^2 (\mu - \mu_2)/\lambda_1(V_1'V_1/n)$. Using additionally that $n/(n-\abs{m_1}+1) \geq 1$, we can bound the term in the preceding display from above by
\begin{gather*}
2 \E \lrs{ \exp \lrr{ - \abs{m_1} \frac{ \alpha_5^2 \lambda_1(V_1'V_1/n) \tau^2}{2} }}.
\end{gather*}
Because also the terms $T_6$, $T_7$, $T_8$, $T_9$ and $T_{10}$ follow, conditional on $Z$, a centered normal distribution, we will use the same arguments as for the fifth term, mutatis mutandis, for bounding the summands involving these terms. Define $\tilde{V}_2=\tilde{Z}_2 (S_{2,2} - S_{2,1} S_{1,1}^{-1} S_{1,2})^{-1/2}$ and note that $\tilde{V}_2$ is a $n \times \abs{m_2}$-matrix that has i.i.d.\ entries that follow a standard normal distribution. Note that the conditional variance of $T_6$ is bounded from above by $s^4 (\mu - \mu_2)/\lambda_1(V_1'V_1)$, that the conditional variance of $T_7$ is bounded from above by $s^4 \mu_2 \lambda_{\abs{m_2}}(\tilde{V}_2'\tilde{V}_2)/ \lambda_1^2(V_1'V_1)$, that the conditional variance of $T_8$ is bounded from above by $s^4 \mu_2/\lambda_1(\tilde{V}_2'M_1\tilde{V}_2)$, that the conditional variance of $T_9$ is bounded from above by $s^4 (\mu - \mu_2) \lambda_{\abs{m_2}}(\tilde{V}_2' \tilde{V}_2)/(\lambda_1(\tilde{V}_2'M_1\tilde{V}_2) \lambda_1(V_1'V_1))$ and that the conditional variance of $T_{10}$ is bounded from above by $s^4 \mu_2 \lambda^2_{\abs{m_2}}(\tilde{V}_2' \tilde{V}_2)/(\lambda_1(\tilde{V}_2'M_1\tilde{V}_2) \lambda^2_1(V_1'V_1))$. Using the upper bounds of the variances, the upper bounds on $\abs{r_i}$ together with Lemma~\ref{lem:nd} conditional on $Z$ and the facts that $n/(n-\abs{m_1}+1) \geq 1$ as well as $(n-\abs{m_1})/(n-\abs{m_1}+1) \geq 1/2$, we can bound the sum of the sixth up to the tenth term in \eqref{eq:bound.rho.2} from above by
\begin{align*}
& 2 \E \lrs{ \exp \lrr{ - n \frac{ \alpha_6^2 \lambda_1(V_1'V_1/n) \tau^2}{2} }}\\
& + 2 \E \lrs{ \exp \lrr{ - \abs{m_1}  \frac{ \alpha_7^2 \lambda_1^2(V_1'V_1/n) \tau^2}{2 \lambda_{\abs{m_2}}(\tilde{V}_2'\tilde{V}_2/n) } }} \\
& + 2 \E \lrs{ \exp \lrr{ - (n - \abs{m_1}) \frac{ \alpha_8^2 \lambda_1(\tilde{V}_2'M_1\tilde{V}_2/(n - \abs{m_1})) \tau^2}{2} }} \\
& + 2 \E \lrs{ \exp \lrr{ - (n - \abs{m_1}) \frac{ \alpha_9^2 \lambda_1(\tilde{V}_2'M_1\tilde{V}_2/(n - \abs{m_1}))  \lambda_1(V_1'V_1/n) \tau^2}{2 \lambda_{\abs{m_2}}(\tilde{V}_2'\tilde{V}_2/n)}} } \\
& + 2 \E \lrs{ \exp \lrr{ - \abs{m_1} \frac{ \alpha_{10}^2\lambda_1(\tilde{V}_2'M_1\tilde{V}_2/(n - \abs{m_1})) \lambda^2_1(V_1'V_1/n) \tau^2}{4 \lambda^2_{\abs{m_2}}(\tilde{V}_2'\tilde{V}_2/n)} }}.
\end{align*}
In order to get upper bounds for the sum of the previous two displays, we need upper bounds on $\lambda_{\abs{m_2} }(\tilde{V}_2'\tilde{V}_2/n)$ and lower bounds on $\lambda_1(V_1'V_1/n)$ and $\lambda_1(\tilde{V}_2' M_2 \tilde{V}_2/(n - \abs{m_1}))$. Recall $A_1$ and define
\begin{align*}
A_2 &= \{ \lambda_1( \tilde{V}_2'M_1' \tilde{V}_2/(n-\abs{m_1})) > \gamma_2^2(1-\sqrt{\abs{m_2}/(n-\abs{m_1})})^2\} \\
A_3 &= \{ \lambda_{\abs{m_2}}(\tilde{V}_2' \tilde{V}_2/n) < (1+\gamma_3)^2(1 + \sqrt{\abs{m_2}/n})^2\}
\end{align*}
for $\gamma_2 \in (0,1)$ and $\gamma_3>0$. Integrating the corresponding terms separately over $A_1, A_2, A_3$ and its complements, noting that the integrand is bounded from above by 1 and using Lemma~\ref{lem:eigenvalues}, we can bound the sum of the fifth up to the tenth term in \eqref{eq:bound.rho.2} finally from above by
\begin{align}
\begin{split} \label{eq:bound.rho.23}
& 2 \exp \lrr{ - \abs{m_1} \frac{ \alpha_5^2 \gamma_1^2 (1-\sqrt{\abs{m_1}/n})^2 \tau^2 }{2} }\\
& + 2 \exp \lrr{ - n \frac{ \alpha_6^2\gamma_1^2 \ (1-\sqrt{\abs{m_1}/n})^2 \tau^2}{2} }\\
& + 2 \exp \lrr{ - \abs{m_1} \frac{ \alpha_7^2 \gamma_1^4 (1-\sqrt{\abs{m_1}/n})^4 \tau^2}{2 (1+\gamma_3)^2 (1+\sqrt{\abs{m_2}/n})^2 } } \\
& + 2 \exp \lrr{ - (n - \abs{m_1}) \frac{ \alpha_8^2 \gamma_2^2 (1-\sqrt{\abs{m_2}/(n-\abs{m_1})})^2 \tau^2}{2} } \\
& + 2 \exp \lrr{ - (n - \abs{m_1}) \frac{ \alpha_9^2 \gamma_1^2 \gamma_2^2   (1-\sqrt{\abs{m_2}/(n-\abs{m_1})})^2 (1- \sqrt{\abs{m_1}/n})^2 \tau^2 }{2 (1+\gamma_3)^2 (1+\sqrt{\abs{m_2}/n})^2}}  \\
& + 2 \exp \lrr{ - \abs{m_1} \frac{ \alpha_{10}^2 \gamma_1^4 \gamma_2^2 (1-\sqrt{\abs{m_2}/(n-\abs{m_1})})^2 (1- \sqrt{\abs{m_1}/n})^4 \tau^2 }{4(1+\gamma_3)^4 (1+\sqrt{\abs{m_2}/n})^4 } } \\
& + 10 \exp \lrr{ - n  \frac{( 1- \gamma_1)^2 (1-\sqrt{\abs{m_1}/n })^2}{2} } + 6 \exp \lrr{ - n  \frac{\gamma_3^2 (1+\sqrt{\abs{m_2}/n})^2}{2}  } \\
&+ 6 \exp \lrr{ - n  \frac{ (1 - \gamma_2)^2 (1-\sqrt{\abs{m_2}/(n - \abs{m_1})})^2}{2}  }.
\end{split}
\end{align}
The term $T_{11}$ can be rewritten as $w'Qw$ where $Q = Z_1(Z_1'Z_1)^{-1} S_{1,1} (Z_1'Z_1)^{-1} Z_1' \tilde{Z}_2(Z_2'M_1Z_2)^{-1} Z_2'M_1$ and where $w \sim N(0, s^2 I_n)$ as in \eqref{eq:model.z}. Noting that $\trace(Q)=0$ and that $Q Q = \mathbf{0}$, i.e., the zero matrix, we can use Corollary \ref{cor:qf.traceless0}, conditional on $Z$, together with the upper bound on $\abs{r_{11}}$ to bound the last term in \eqref{eq:bound.rho.2} if $\lambda_n(Q + Q')>0$ from above by 
\begin{gather} \label{eq:bound.rho.241}
4 \E \lrs{ \exp \lrr{ - n \frac{  \alpha_{11}^2 \tau^2}{8 n \sqrt{2 \lambda_n(Q'Q)} ( \alpha_{11} \tau + 2 n \sqrt{2 \lambda_n(Q'Q)})} } }.
\end{gather}
Recall from the proof of Lemma~\ref{lem:qf.traceless0} that $\lambda_n(Q'Q) >0$ in this case. In the case where $\lambda_n(Q+Q')=0$, the corresponding upper bound equals 0. Note that $M_1 Z_2 = M_1 \tilde{Z}_2$, that 
\begin{gather*}
Q'Q = M_1 \tilde{V}_2 (\tilde{V}_2'M_1\tilde{V}_2)^{-1} \tilde{V}_2' V_1 (V_1'V_1)^{-3} V_1'\tilde{V}_2 (\tilde{V}_2'M_1 \tilde{V}_2)^{-1} \tilde{V}_2'M_1,
\end{gather*}
that the term in \eqref{eq:bound.rho.241} is nondecreasing in $\lambda_n(Q'Q)$ and finally that
\begin{align*}
\lambda_n(Q'Q) & \leq \frac{1}{n(n-\abs{m_1})} \frac{\lambda_{\abs{m_2}}(\tilde{V}_2'\tilde{V}_2/n ) }{ \lambda_1( \tilde{V}_2'M_1\tilde{V}_2 /(n-\abs{m_1})) \lambda_1^2(V_1'V_1/n)}.
\end{align*}
Integrating the term in \eqref{eq:bound.rho.241} separately over $A_1 \cap A_2 \cap A_3$ and its complement, noting that the integrand is bounded from above by 1 and using Lemma~\ref{lem:eigenvalues}, we can bound the last term in \eqref{eq:bound.rho.2} from above by
\begin{align}
\begin{split} \label{eq:bound.rho.24}
& + 4  \exp \lrr{ - n \frac{  \alpha_{11}^2 \gamma_1^4 \gamma_2^2  C_1^2 \tau^2 }{8  \sqrt{2 } (1+\gamma_3)  \left(  \alpha_{11} \gamma_1^2 \gamma_2  C_1 \tau + 2  \sqrt{2 } (1+\gamma_3)  \right)  }  } \\
& + 4 \exp \lrr{ - n  \frac{( 1- \gamma_1)^2 (1-\sqrt{\abs{m_1}/n })^2}{2} } + 4 \exp \lrr{ - n  \frac{\gamma_3^2 (1+\sqrt{\abs{m_2}/n})^2}{2}  } \\
&+ 4 \exp \lrr{ - n  \frac{ (1 - \gamma_2)^2 (1-\sqrt{\abs{m_2}/(n - \abs{m_1})})^2}{2}  },
\end{split}
\end{align}
where $C_1 = (1-\sqrt{\abs{m_2}/(n - \abs{m_1})})( 1- \sqrt{ \abs{m_1}/n})^2 \sqrt{1-\abs{m_1}/n}/(1+\sqrt{\abs{m_2}/n})$.

Collecting the terms in \eqref{eq:bound.rho.21}, \eqref{eq:bound.rho.22}, \eqref{eq:bound.rho.23} and \eqref{eq:bound.rho.24} and using the facts that $(n+1)(1-\abs{m_1}/(n+1))^3 \geq (n+1)(1-\abs{m}/(n+1))^3 \geq n(1-\abs{m}/n)^3$, $n/(n- \abs{m_1}+1) \geq 1$, $\abs{m_1}/(n- \abs{m_1} + 1) (1 - \sqrt{\abs{m_1}/n})^2 \leq 1$ and that $1 \leq 1+ \sqrt{\abs{m_2}/n} \leq 2$, we can bound the sum in \eqref{eq:bound.rho.2} finally from above by
\begin{align}
\begin{split} \label{eq:bound.rho.3}
2 &\exp \lrr{ - n \lrr{1 - \frac{\abs{m}}{n} }^3   \frac{ \alpha_1^2 \tau^2}{4( \alpha_1(1-\abs{m}/(n+1)) \tau +1 )^2 } }\\
& + 2 \exp \lrr{ - n \lrr{1 - \frac{\abs{m}}{n} }^3    \frac{ \alpha_2^2  \tau^2 }{4( \alpha_2(1-\abs{m_1}/(n+1)) \tau +1 )^2} } \\
& + 2  \exp \lrr{ - \abs{m_1} \frac{\abs{m_1}}{n}   \frac{\alpha_3^2 \gamma_1^4 (1-\sqrt{\abs{m_1}/n})^4 \tau^2}{4 ( \alpha_3 \gamma_1^2 \tau + 1) } }  \\
& + 2 \abs{m_1} \exp \lrr{ - n \lrr{1 - \frac{\abs{m_1}}{n} } \frac{\alpha_4^2 \tau^2 }{8( \alpha_4 \tau + 1)^2 } } \\
& + 2 \exp \lrr{ - \abs{m_1} \frac{ \alpha_5^2 \gamma_1^2 (1-\sqrt{\abs{m_1}/n})^2 \tau^2 }{2} }\\
& + 2 \exp \lrr{ - n \frac{ \alpha_6^2\gamma_1^2 \ (1-\sqrt{\abs{m_1}/n})^2 \tau^2}{2} }\\
& + 2 \exp \lrr{ - \abs{m_1} \frac{ \alpha_7^2 \gamma_1^4 (1-\sqrt{\abs{m_1}/n})^4 \tau^2}{8 (1+\gamma_3)^2  } } \\
& + 2 \exp \lrr{ - n \lrr{1 - \frac{\abs{m_1}}{n} } \frac{ \alpha_8^2 \gamma_2^2 (1-\sqrt{\abs{m_2}/(n-\abs{m_1})})^2 \tau^2}{2} } \\
& + 2 \exp \lrr{ - n \lrr{1 - \frac{\abs{m_1}}{n} } \frac{ \alpha_9^2 \gamma_1^2 \gamma_2^2   (1-\sqrt{\abs{m_2}/(n-\abs{m_1})})^2 (1- \sqrt{\abs{m_1}/n})^2 \tau^2 }{8 (1+\gamma_3)^2}}  \\
& + 2 \exp \lrr{ - \abs{m_1} \frac{ \alpha_{10}^2 \gamma_1^4 \gamma_2^2 (1-\sqrt{\abs{m_2}/(n-\abs{m_1})})^2 (1- \sqrt{\abs{m_1}/n})^4 \tau^2 }{64(1+\gamma_3)^4  } } \\
& + 4  \exp \lrr{ - n \frac{  \alpha_{11}^2 \gamma_1^4 \gamma_2^2  \tilde{C}_1^2 \tau^2 }{16  \sqrt{2 } (1+\gamma_3)  \left(  \alpha_{11} \gamma_1^2 \gamma_2  \tilde{C}_1 \tau + 4  \sqrt{2 } (1+\gamma_3)  \right)  }  } \\
& + 16 \exp \lrr{ - n  \frac{( 1- \gamma_1)^2 (1-\sqrt{\abs{m_1}/n })^2}{2} } + 10 \exp \lrr{ - n  \frac{\gamma_3^2 }{2}  } \\
&+ 10 \exp \lrr{ - n  \frac{ (1 - \gamma_2)^2 (1-\sqrt{\abs{m_2}/(n - \abs{m_1})})^2}{2}  }.
\end{split}
\end{align}
where $\tilde{C}_1 = (1-\sqrt{\abs{m_2}/(n - \abs{m_1})})( 
1- \sqrt{ \abs{m_1}/n})^2 \sqrt{1-\abs{m_1}/n}$. 
For the statement in \eqref{eq:bound.rho.pos} use the bound in 
the preceding display with $\tau= \ed-1$, note that $\alpha(\ed-1) + 
D \leq D \ed \leq D \exp(2 \delta)$ holds for any $D \geq 1$ and 
$\alpha \in (0,1)$ and that $\ed \geq 1 \geq (\ed-1)^2/\exp(2 \delta)$. 
Using this together with the inequalities in Lemma~\ref{lem:prop}~(ii), 
we can bound the left hand-side in \eqref{eq:bound.rho.pos} from above by
\begin{align}
\begin{split} \label{eq:bound.rho.pos.1}
2 &\exp \lrr{ - n \frac{ \alpha_1^2 }{4 } \psi(\delta, \abs{m}/n)} + 2 \exp \lrr{ - n  \frac{ \alpha_2^2  }{4 } \psi(\delta, \abs{m}/n) } \\
& + 2  \exp \lrr{ - \abs{m_1} \frac{\abs{m_1}}{n}   \frac{\alpha_3^2 \gamma_1^4 }{18  } \psi(\delta, \abs{m}/n)}  + 2 \abs{m_1} \exp \lrr{ - n   \frac{\alpha_4^2}{8  } \psi(\delta, \abs{m}/n)} \\
& + 2 \exp \lrr{ - \abs{m_1}  \frac{3 \alpha_5^2 \gamma_1^2 }{8 } \psi(\delta, \abs{m}/n)} + 2 \exp \lrr{ - n   \frac{3 \alpha_6^2\gamma_1^2  }{8 } \psi(\delta, \abs{m}/n)}\\
& + 2 \exp \lrr{ - \abs{m_1}   \frac{ \alpha_7^2 \gamma_1^4 }{36 (1+\gamma_3)^2  } \psi(\delta, \abs{m}/n)}  + 2 \exp \lrr{ - n   \frac{3 \alpha_8^2 \gamma_2^2  }{8 } \psi(\delta, \abs{m}/n)} \\
& + 2 \exp \lrr{ - n   \frac{ \alpha_9^2 \gamma_1^2 \gamma_2^2   }{16 (1+\gamma_3)^2 } \psi(\delta, \abs{m}/n)}   + 2 \exp \lrr{ - \abs{m_1}   \frac{ \alpha_{10}^2 \gamma_1^4 \gamma_2^2 }{360(1+\gamma_3)^4  } \psi(\delta, \abs{m}/n)} \\
& + 4  \exp \lrr{ - n   \frac{  \alpha_{11}^2 \gamma_1^4 \gamma_2^2  }{2640 (1+\gamma_3)^2   } \psi(\delta, \abs{m}/n) }  + 16 \exp \lrr{ - n    \frac{3 ( 1- \gamma_1)^2  }{8} \psi(\delta, \abs{m}/n)} \\
&+ 10 \exp \lrr{ - n   \frac{\gamma_3^2 }{2}  \psi(\delta, \abs{m}/n)}  + 10 \exp \lrr{ - n    \frac{ 3 (1 - \gamma_2)^2 }{8} \psi(\delta, \abs{m}/n) },
\end{split}
\end{align}
where $\psi(x,y) = (1-y)^5(\exp(x)-1)^2/\exp(2x)$.
To balance the terms in the preceding display, choose the weights to be $\alpha_1 = \alpha_2 =1/51$, $\alpha_3=11/255$, $\alpha_4=43/1530$, $\alpha_5=\alpha_6=\alpha_8=5/306$, $\alpha_7=19/306$, $\alpha_9=21/510$,  $\alpha_{10}=1/5$, $\alpha_{11}=137/255$ and $\gamma_1=\gamma_2=1- \alpha_1 \sqrt{2/3}$, $\gamma_3=\alpha_1/\sqrt{2}$. This shows the statement in \eqref{eq:bound.rho.pos}.

For the second statement, use the bound in \eqref{eq:bound.rho.3} 
with $\tau=1-\emd =(\ed-1)/\ed$, note that $\alpha(\ed-1)+D \ed \leq (D+1) \ed$
for every $D \geq 0$ and every $\alpha \in (0,1)$  and that 
$1 \geq (\ed-1)^2/\exp(2 \delta)$. Using this together with the 
inequalities in Lemma~\ref{lem:prop}~(ii), we can bound the left 
 hand-side in \eqref{eq:bound.rho.neg} from above by
%
\begin{align}
\begin{split} \label{eq:bound.rho.neg.1}
2 &\exp \lrr{ - n \frac{ \alpha_1^2 }{16 } \psi(\delta, \abs{m}/n)} + 2 \exp \lrr{ - n  \frac{ \alpha_2^2  }{16 } \psi(\delta, \abs{m}/n) } \\
& + 2  \exp \lrr{ - \abs{m_1} \frac{\abs{m_1}}{n}   \frac{\alpha_3^2 \gamma_1^4 }{36  } \psi(\delta, \abs{m}/n)}  + 2 \abs{m_1} \exp \lrr{ - n   \frac{\alpha_4^2}{32  } \psi(\delta, \abs{m}/n)} \\
& + 2 \exp \lrr{ - \abs{m_1}  \frac{3 \alpha_5^2 \gamma_1^2 }{8 } \psi(\delta, \abs{m}/n)} + 2 \exp \lrr{ - n   \frac{3 \alpha_6^2\gamma_1^2  }{8 } \psi(\delta, \abs{m}/n)}\\
& + 2 \exp \lrr{ - \abs{m_1}   \frac{ \alpha_7^2 \gamma_1^4 }{36 (1+\gamma_3)^2  } \psi(\delta, \abs{m}/n)}  + 2 \exp \lrr{ - n   \frac{3 \alpha_8^2 \gamma_2^2  }{8 } \psi(\delta, \abs{m}/n)} \\
& + 2 \exp \lrr{ - n   \frac{ \alpha_9^2 \gamma_1^2 \gamma_2^2   }{16 (1+\gamma_3)^2 } \psi(\delta, \abs{m}/n)}   + 2 \exp \lrr{ - \abs{m_1}   \frac{ \alpha_{10}^2 \gamma_1^4 \gamma_2^2 }{360(1+\gamma_3)^4  } \psi(\delta, \abs{m}/n)} \\
& + 4  \exp \lrr{ - n   \frac{  \alpha_{11}^2 \gamma_1^4 \gamma_2^2  }{330\sqrt{2} (1+\gamma_3)(1+4 \sqrt{2}(1+\gamma_3))   } \psi(\delta, \abs{m}/n) }  \\
& + 16 \exp \lrr{ - n    \frac{3 ( 1- \gamma_1)^2  }{8} \psi(\delta, \abs{m}/n)} + 10 \exp \lrr{ - n   \frac{\gamma_3^2 }{2}  \psi(\delta, \abs{m}/n)} \\
&  + 10 \exp \lrr{ - n    \frac{ 3 (1 - \gamma_2)^2 }{8} \psi(\delta, \abs{m}/n) },
\end{split}
\end{align}
To balance the terms in the preceding display, choose the weights to be $\alpha_1 = \alpha_2 = 5/143$, $\alpha_3 = 541/10010$, $\alpha_4=99/2002$, $\alpha_5=\alpha_6=\alpha_8=29/2002$, $\alpha_7 = 547/10010$, $\alpha_9 = 73/2002$, $\alpha_{10} = 1777/10010$, $\alpha_{11}=1030/2002$, $\gamma_1 =\gamma_2 = 1-\alpha_1/\sqrt{6}$ and $\gamma_3=\alpha_1/\sqrt{8}$ which shows the statement in \eqref{eq:bound.rho.neg}.
\end{proof}
\begin{appxlem} \label{lem:bound.rho.m}
For each $\delta>0$, we have
\begin{align} \label{eq:bound.rho.m.pos}
\P(\rho^2(m)/r > \exp(\delta))  & \leq (58 + 2 \abs{m_1}) \exp \lrr{ - n \lrr{1 - \frac{\abs{m}}{n}}^5  \frac{ (\ed-1)^2}{19371 (1+\mu)^2 \exp(2 \delta)} } 
\intertext{as well as} \label{eq:bound.rho.m.neg}
\P( \rho^2(m)/r < \exp(-\delta)) & \leq (58 + 2 \abs{m_1}) \exp \lrr{ - n \lrr{1 - \frac{\abs{m}}{n}}^5  \frac{ (\ed-1)^2}{22534 (1+\mu)^2 \exp(2 \delta)} } ,
\end{align}
where $r$ was defined in \eqref{eq:r} in Lemma~\ref{lem:bound.rhohat}. 
\end{appxlem}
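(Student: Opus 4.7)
The proof plan closely mirrors that of Lemma \ref{lem:bound.rho}: I would reuse the decomposition of $\rho^2(m)/r$ as $\sum_{i=1}^{11} r_i T_i + 1$ arising from \eqref{eq:bound.rho.rho} and \eqref{eq:bound.rho.1}, and bound each of the eleven tail probabilities exactly as in \eqref{eq:bound.rho.2}. The only difference lies in which lower bound on $r$ from \eqref{eq:bound.r} we invoke for each coefficient $r_i$, analogous to how Lemma \ref{lem:bound.rhohat.m} was derived from Lemma \ref{lem:bound.rhohat} by reworking a single sub-bound.

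The key change is to use the uniform lower bound $r \geq s^2$ throughout, giving $\abs{r_i} \leq C_i / s^2$ for absolute constants $C_i$, rather than the $\mu$-dependent estimates $\abs{r_i} \leq (s^2 \sqrt{\mu_2(\mu-\mu_2)\cdots})^{-1}$ used in the original proof. This trades the $\abs{m_1}/n$ factor that was introduced by the bound $\abs{r_3} \leq (s^2 \mu_2 \abs{m_1}/(n-\abs{m_1}+1))^{-1}$ for new $\mu$-dependence arising from the conditional variances of $T_3, T_5, \ldots, T_{10}$. For the conditionally normal terms $T_5, \ldots, T_{10}$ I would apply Lemma \ref{lem:nd} with the new thresholds, and for $T_3$ I would apply Corollary \ref{cor:bound.quadraticform} conditional on $Z_1$, in both cases controlling the eigenvalues of $V_1'V_1/n$ and $\tilde{V}_2'\tilde{V}_2/n$ on a high-probability event via Lemma \ref{lem:eigenvalues}, exactly as in \eqref{eq:bound.rho.22}--\eqref{eq:bound.rho.24}. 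The ratio of (threshold)$^2$ to variance then yields factors $1/\mu_2$, $1/(\mu-\mu_2)$, or $1/(\mu_2(\mu-\mu_2))$ which, via the crude estimates $\mu_2, \mu-\mu_2 \leq \mu$ and AM--GM, are bounded below by $(1+\mu)^{-2}$. The bilinear terms $T_5, T_7, T_9, T_{10}$, whose conditional variances involve the product $\mu_2(\mu-\mu_2)$, are the ones that saturate this $(1+\mu)^{-2}$ rate and dictate the overall bound.

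The main obstacle, as in Lemma \ref{lem:bound.rho}, is the careful rebalancing of the weights $\alpha_1, \ldots, \alpha_{11}$ and the auxiliary eigenvalue parameters $\gamma_1, \gamma_2, \gamma_3$: since the relative scales of the eleven main terms and the four eigenvalue exceptional events have changed, the optimal weights differ from those chosen in \eqref{eq:bound.rho.pos.1} and \eqref{eq:bound.rho.neg.1}, and this weight optimization is precisely what determines the constants $19371$ and $22534$ in the statement. Once all terms are collected into an expression analogous to \eqref{eq:bound.rho.3}, I would apply Lemma \ref{lem:prop}(ii) to consolidate the various $(1-\sqrt{\cdot})^k$ factors into a single $(1-\abs{m}/n)^5$ factor, and then set $\tau = \ed - 1$ to derive \eqref{eq:bound.rho.m.pos} and $\tau = 1 - \emd$ to derive \eqref{eq:bound.rho.m.neg}.
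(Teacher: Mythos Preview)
Your plan is essentially the paper's approach: rework selected summands in \eqref{eq:bound.rho.2} by replacing the $\mu$-dependent bounds on $|r_i|$ from \eqref{eq:bound.r} with bounds of the form $|r_i|\le C_i/s^2$, so that the $\mu$-dependence enters only through the conditional variances of the $T_i$, and then reoptimize the weights. Two small corrections to your description are worth noting. First, the paper reworks only terms $3,5,6,7,8,10$; term $9$ is left untouched because the original bound $|r_9|\le (s^2\sqrt{\mu-\mu_2})^{-1}$ already cancels the $(\mu-\mu_2)$ in $\Var(T_9\mid Z)$, so the resulting exponent is $\mu$-free and has $n(1-|m_1|/n)$ rather than $|m_1|$ out front---there is no reason to weaken it. Second, your assertion that $T_5,T_7,T_9,T_{10}$ all carry conditional variance proportional to $\mu_2(\mu-\mu_2)$ is inaccurate: only $T_5$ has that product structure, while $T_7,T_{10}$ involve $\mu_2$ alone and $T_9$ involves $\mu-\mu_2$ alone. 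Relatedly, the paper does \emph{not} use the crude bound $|r_5|\le 2/s^2$ but rather the sharper $|r_5|\le 2a_1/r\le (s^2\sqrt{\mu-\mu_2})^{-1}$ obtained from $r\ge s^2(1+a_1^2(\mu-\mu_2))$ and AM--GM; this kills the $(\mu-\mu_2)$ factor in $\Var(T_5)$ and leaves only a $1/\mu_2$ in the exponent, so that every reworked term ends up with at worst a single $(1+\mu_2)$ or $(1+\mu-\mu_2)$ in the denominator (the $(1+\mu)^2$ in the final statement comes from $T_3$ alone). Your cruder route still works and yields a bound of the same shape, but with larger constants than $19371$ and $22534$; to recover the paper's exact constants you would need to adopt these refinements and then carry out the weight rebalancing with the specific numerical choices given there.
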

%
\begin{proof}
We take over some of the upper bounds from Lemma \ref{lem:bound.rho} but use different bounds for the third, the fifth up to the eighth and the tenth term in \eqref{eq:bound.rho.2}. As in the proof of Lemma \ref{lem:bound.rho}, it suffices to consider only the case where $\mu_2>0$ and $\mu - \mu_2 >0$. In the case where $\mu_2=0$ we have that $\theta_2=\mathbf{0}$ and hence that all of the above mentioned terms equal 0, in the case where $\mu - \mu_2=0$, we have that $\tilde{\theta}_1= \mathbf{0}$ and hence that the fifth term equals 0. In these cases we can trivially bound the corresponding terms by the upper bounds that we will derive now. Use $\abs{r_3} \leq 1/s^2$ together with Corollary \ref{cor:bound.quadraticform} conditional on $Z$ as before to bound the third term in \eqref{eq:bound.rho.2} from above by
\begin{align*}
2 \E \lrs{ \exp \lrr{ - n \frac{\alpha_3^2 \tau^2 /\mu_2^2}{4 \lambda_1^{-1}(V_1'V_1/n)(\alpha_3 \tau/\mu_2 + \lambda_1^{-1}(V_1'V_1/n) ) } } }.
\end{align*}
As in the proof of Lemma~\ref{lem:bound.rho} let $A_1=\{ \lambda_1(V_1'V_1/n) > \gamma_1^2 (1 - \sqrt{\abs{m_1}/n})^2 \}$ and integrate separately over the event $A_1$ and its complement to bound the term in the preceding display from above by (cf. the bound in \eqref{eq:bound.rho.22})
\begin{align*}
2 & \exp \lrr{ - n \frac{\alpha_3^2 \gamma_1^4 (1 - \sqrt{\abs{m_1}/n})^4 \tau^2 }{4 \mu_2 ( \alpha_3 \gamma_1^2 (1 - \sqrt{\abs{m_1}/n})^2 \tau + \mu_2 ) } } \\
& + 2 \exp \lrr{ - n \frac{(1-\gamma_1)^2  (1 - \sqrt{\abs{m_1}/n})^2}{2}} .
\end{align*}
For the remaining four terms, note that $r \geq s^2 (1+a_1^2(\mu - \mu_2) ) \geq s^2$ which implies that $\abs{r_5} \leq 2 a_1/r \leq (s^2 \sqrt{\mu - \mu_2})^{-1}$, $\abs{r_6} \leq 2 a_1(1 - a_1)/s^2 \leq 1/(2 s^2)$, $\abs{r_7} \leq 2/s^2$, $\abs{r_8} \leq 2 a_2(1-a_2)/s^2 \leq 1/(2s^2)$ and $\abs{r_{10}} \leq 2/s^2$. Using these upper bounds together with the arguments in the proof of Lemma \ref{lem:bound.rho} to bound the sum of the fifth up to the eighth and the tenth term in \eqref{eq:bound.rho.2} from above by
\begin{align*}
2 & \E \lrs{ \exp \lrr{ - n \frac{\alpha_5^2  \lambda_1(V_1'V_1/n) \tau^2 }{ 2 \mu_2} } } \\
& + 2  \E \lrs{ \exp \lrr{ - n \frac{2 \alpha_6^2 \lambda_1(V_1'V_1/n) \tau^2}{ \mu-\mu_2  } } }\\
& + 2  \E \lrs{ \exp \lrr{ - n \frac{\alpha_7^2 \lambda_1^2(V_1'V_1/n) \tau^2}{8 \mu_2 \lambda_{\abs{m_2}}(\tilde{V}_2' \tilde{V}_2/n)} } } \\
& + 2  \E \lrs{ \exp \lrr{ - (n - \abs{m_1}) \frac{2 \alpha_8^2 \lambda_1(\tilde{V}_2'M_1 \tilde{V}_2/(n- \abs{m_1})) \tau^2}{ \mu_2  } } } \\
& + 2  \E \lrs{ \exp \lrr{ - (n - \abs{m_1}) \frac{\alpha_{10}^2 \lambda_1(\tilde{V}_2'M_1 \tilde{V}_2/(n- \abs{m_1})) \lambda_1^2(V_1'V_1/n) \tau^2}{8 \mu_2 \lambda_{\abs{m_2}}^2(\tilde{V}_2' \tilde{V}_2/n)} } } .
\end{align*}
Integrating the corresponding terms over $A_1$, $A_2$, $A_3$ and its complements as in the proof of Lemma~\ref{lem:bound.rho}, mutatis mutandis, we can finally bound the sum of the third, the fifth up to the eighth and the tenth term in \eqref{eq:bound.rho.2} from above by
\begin{align*}
2 & \exp \lrr{ - n \frac{\alpha_3^2 \gamma_1^4 (1 - \sqrt{\abs{m_1}/n})^4 \tau^2 }{4 \mu_2 ( \alpha_3 \gamma_1^2 (1 - \sqrt{\abs{m_1}/n})^2 \tau + \mu_2 ) } } \\
& + 2  \exp \lrr{ - n \frac{\alpha_5^2 \gamma_1^2 (1 - \sqrt{\abs{m_1}/n})^2 \tau^2 }{ 2 \mu_2} }  \\
& + 2  \exp \lrr{ - n \frac{2 \alpha_6^2 \gamma_1^2 (1 - \sqrt{\abs{m_1}/n})^2\tau^2}{ \mu-\mu_2  } } \\
& + 2   \exp \lrr{ - n \frac{\alpha_7^2 \gamma_1^4 (1 - \sqrt{\abs{m_1}/n})^4 \tau^2}{8 \mu_2 (1+\gamma_3)^2(1+ \sqrt{\abs{m_2}/n})^2} }  \\
& + 2  \exp \lrr{ - (n - \abs{m_1}) \frac{2 \alpha_8^2 \gamma_2^2 (1 - \sqrt{\abs{m_2}/(n - \abs{m_1})})^2 \tau^2}{ \mu_2  } }  \\
& + 2  \exp \lrr{ - (n - \abs{m_1}) \frac{\alpha_{10}^2 \gamma_1^4 \gamma_2^2 (1 - \sqrt{\abs{m_2}/(n - \abs{m_1})})^2 (1 - \sqrt{\abs{m_1}/n})^4 \tau^2}{8 \mu_2 (1+\gamma_3)^4 (1+ \sqrt{\abs{m_2}/n})^4} } \\
& + 10 \exp \lrr{ - n \frac{(1-\gamma_1)^2  (1 - \sqrt{\abs{m_1}/n})^2}{2}} +  4 \exp \lrr{ - n \frac{\gamma_3^2 (1+ \sqrt{\abs{m_2}/n})^2}{2}} \\
& + 4 \exp \lrr{ - n \frac{(1-\gamma_2)^2  (1 - \sqrt{\abs{m_2}/(n - \abs{m_1})})^2}{2}}.
\end{align*}
Plugging in $\tau=\ed-1$, using $\mu_2(\alpha (\ed-1) + \mu_2) \leq (1+\mu_2)^2 \ed$ for any $\alpha \in [0,1]$, $\mu_2 \leq 1+ \mu_2$ as well as $1 \leq 1+\sqrt{\abs{m_2}/n} \leq 2$, $1-\abs{m_1}/n \geq 1- \abs{m}/n$ and $\ed \geq 1 \geq (\ed-1)^2/\exp(2 \delta)$ together with the bounds in Lemma~\ref{lem:prop}~(ii), we can bound the sum of the first six terms in the preceding display from above by
\begin{align*}
2 & \exp \lrr{ - n \lrr{1 - \frac{\abs{m_1}}{n}}^5 \frac{\alpha_3^2 \gamma_1^4  (\ed-1)^2 }{18 (1+\mu_2)^2 \exp(2 \delta) } } \\
& + 2  \exp \lrr{ - n  \lrr{1 - \frac{\abs{m_1}}{n}}^5 \frac{3\alpha_5^2 \gamma_1^2 (\ed-1)^2 }{ 8 (1+\mu_2) \exp(2 \delta) } }  \\
& + 2  \exp \lrr{ - n  \lrr{1 - \frac{\abs{m_1}}{n}}^5 \frac{3 \alpha_6^2 \gamma_1^2 (\ed-1)^2}{ 2 (1+\mu-\mu_2) \exp(2 \delta) } } \\
& + 2   \exp \lrr{ - n  \lrr{1 - \frac{\abs{m_1}}{n}}^5\frac{\alpha_7^2 \gamma_1^4 (\ed-1)^2 }{144 (1+\gamma_3)^2 (1+\mu_2) \exp(2 \delta)} }  \\
& + 2  \exp \lrr{ - n  \lrr{1 - \frac{\abs{m_1}}{n}}^5 \frac{3 \alpha_8^2 \gamma_2^2 (\ed-1)^2}{2(1+ \mu_2) \exp(2 \delta)  } }  \\
& + 2  \exp \lrr{ - n  \lrr{1 - \frac{\abs{m}}{n}}^5  \frac{\alpha_{10}^2 \gamma_1^4 \gamma_2^2(\ed-1)^2}{2640 (1+\gamma_3)^4 (1+\mu_2) \exp(2 \delta)} }
\end{align*}
Hence, we can bound the left-hand side in \eqref{eq:bound.rho.m.pos} from above by the sum in \eqref{eq:bound.rho.pos.1} where we replace the sum of the third, fifth up to the eighth and tenth term by the sum in the preceding display. Choosing the weights to be $\alpha_1=\alpha_2 = 7/487$, $\alpha_3= 76/2435$, $\alpha_4 = 99/4870$, $\alpha_5= 29/2435 $, $\alpha_6=\alpha_8 = 29/4870$, $\alpha_7=87/974$, $\alpha_9=29/974$, $\alpha_{10} = 1901/4870$, $\alpha_{11}=941/2435$, $\gamma_1=\gamma_2 = 1-\alpha_1 \sqrt{2/3}$, $\gamma_3=\alpha_1/\sqrt{2}$ and noting that $1 \leq 1+\mu-\mu_2 \leq 1+\mu $, $1 \leq 1+\mu_2 \leq 1+\mu$ as well as $\abs{m_1} (58/\abs{m_1}+2) \leq \abs{m_1} 64/3$ gives the statement.
To bound the left-hand side in \eqref{eq:bound.rho.m.neg}, we use the bound in the second-to-last display with $\tau = 1-\emd =(\ed - 1)/\ed$ and the facts that $\mu_2 \ed (\alpha (\ed-1) + \mu_2 \ed) \leq (1+\mu_2)^2 \exp(2 \delta)$ for any $\alpha \in [0,1]$, $\mu_2 \leq 1+ \mu_2$ as well as $1 \leq 1+\sqrt{\abs{m_2}/n} \leq 2$, $1-\abs{m_1}/n \geq 1- \abs{m}/n$ and $1 \geq (\ed-1)^2/\exp(2 \delta)$ together with the bounds in Lemma~\ref{lem:prop}~(ii) and get the upper bound as in the preceding display. Hence, we can bound the left-hand side in \eqref{eq:bound.rho.m.neg} from above by using the sum in \eqref{eq:bound.rho.neg.1} where we replace the sum of the third, fifth up to the eighth and tenth term by the sum in the preceding display. Choosing the weights to be $\alpha_1=\alpha_2 = 2/75$, $\alpha_3= 13/450$, $\alpha_4 = 17/450$, $\alpha_5= 11/1000$, $\alpha_6=\alpha_8 = 11/2000$, $\alpha_7=743/9000$, $\alpha_9=248/9000$, $\alpha_{10} = 649/1800$, $\alpha_{11}=581/1500$, $\gamma_1=\gamma_2 = 1-\alpha_1/ \sqrt{6}$, $\gamma_3=\alpha_1/\sqrt{8}$ and noting that $1 \leq 1+\mu-\mu_2 \leq 1+\mu $ and that $1 \leq 1+\mu_2 \leq 1+\mu$ gives the statement.

\end{proof}

\begin{proof}[Proof of Theorem \ref{th:help.bound}]

The left-hand side of \eqref{eq:help.bound} can be rewritten as
\begin{align*}
\P\lrr{ \frac{\hat{\rho}^2(m)}{\rho^2(m)} \geq \exp(\eps) } + \P\lrr{ \frac{\hat{\rho}^2(m)}{\rho^2(m)} \leq \exp(- \eps) }.
\end{align*}
We can bound the sum in the preceding display for any $\alpha_1, \alpha_2 \in (0,1)$ from above by
\begin{align*}
\P & \lrr{ \hat{\rho}^2(m)/r \geq \exp( \alpha_1 \eps) } + \P\lrr{ \rho^2(m)/r \leq \exp(- (1 - \alpha_1) \eps) } \\
& + \P\lrr{ \hat{\rho}^2(m)/r \leq \exp(- \alpha_2 \eps) } + \P\lrr{ \rho^2(m)/r \geq \exp( (1-\alpha_2) \eps ) },
\end{align*}
where $r$ was defined in \eqref{eq:r} in Lemma~\ref{lem:bound.rhohat}. Note that $(\exp(\alpha \eps)-1)^2/\exp(\alpha \eps) \geq (\exp(\alpha \eps)-1)^2/\exp(2 \alpha \eps) \geq \alpha^2 \eps^2/(1+ \eps)^2$ holds for any $\alpha \in (0,1)$ and $\eps >0$. Using this fact together with Lemma~\ref{lem:bound.rhohat} and Lemma~\ref{lem:bound.rho}, we can bound the sum in the preceding display from above by
\begin{align*}
22 & \exp \lrr{ - \abs{m_1} \lrr{1 - \frac{\abs{m}}{n} }^3   \frac{ \alpha_1^2 \eps^2}{210 (1+\eps)^2 } } \\
 & + (58 + 2 \abs{m_1}) \exp \lrr{ - \abs{m_1} \frac{\abs{m_1}}{n} \lrr{1 - \frac{\abs{m}}{n} }^5   \frac{ (1-\alpha_1)^2 \eps^2}{13089 (1+ \eps)^2 } } \\
& + 22 \exp \lrr{ - \abs{m_1} \lrr{1 - \frac{\abs{m}}{n} }^3   \frac{ \alpha_2^2 \eps^2}{840 (1+\eps)^2 } } \\
& + (58 + 2 \abs{m_1}) \exp \lrr{ - \abs{m_1} \frac{\abs{m_1}}{n} \lrr{1 - \frac{\abs{m}}{n} }^5   \frac{ (1-\alpha_2)^2 \eps^2}{10477 (1+ \eps)^2 } }.
\end{align*}
Choosing $\alpha_1 \in (0,1)$ such that the exponents of the first two terms in the preceding display coincide gives $\alpha_1 = \sqrt{210} \sqrt{\abs{m_1}/n}(1-\abs{m}/n)/(\sqrt{210}\sqrt{\abs{m_1}/n}(1-\abs{m}/n)+\sqrt{13089})$. Doing the same for $\alpha_2$ and noting that $\sqrt{\abs{m_1}/n}(1-\abs{m}/n) \leq \sqrt{\abs{m_1}/n}(1-\abs{m_1}/n) \leq 2/(3 \sqrt{3})$ and noting that$160 + 4 \abs{m_1} \leq 160+4 \abs{m} = \abs{m}(160/\abs{m}+4) \leq 31 \abs{m}$ because $\abs{m} \geq 6$ gives the result in \eqref{eq:help.bound}.

To show the result in \eqref{eq:help.bound.m}, we use Lemma~\ref{lem:bound.rhohat.m} and Lemma~\ref{lem:bound.rho.m} together with the inequalities discussed above and the facts that $1+\mu \leq (1+\mu)^2$ and that $1-\abs{m}/n \leq 1$ to bound the sum in the second-to-last display from above by
\begin{align*}
22 & \exp \lrr{ - n \lrr{1 - \frac{\abs{m}}{n} }^5   \frac{ \alpha_1^2 \eps^2}{210 (1+\mu)^2(1+\eps)^2 } } \\
 & + (58 + 2 \abs{m_1})\exp \lrr{ - n \lrr{1 - \frac{\abs{m}}{n} }^5   \frac{ (1-\alpha_1)^2 \eps^2}{22534 (1+\mu)^2(1+ \eps)^2 } } \\
& + 22 \exp \lrr{ -n \lrr{1 - \frac{\abs{m}}{n} }^5   \frac{ \alpha_2^2 \eps^2}{840 (1+\mu)^2(1+\eps)^2 } } \\
& + (58 + 2 \abs{m_1}) \exp \lrr{ - n \lrr{1 - \frac{\abs{m}}{n} }^5   \frac{ (1-\alpha_2)^2 \eps^2}{19371(1+\mu)^2 (1+ \eps)^2 } }.
\end{align*}
Choosing $\alpha_1, \alpha_2 \in (0,1)$ such that the exponents in the preceding display coincide gives $\alpha_1=\sqrt{210}/(\sqrt{210}+\sqrt{22534})$ and $\alpha_2=\sqrt{840}/(\sqrt{840}+\sqrt{19371})$. Noting that $160 + 4 \abs{m_1} \leq 31 \abs{m}$ as before gives the result.

\end{proof}

\section{Technical details for Section \ref{sec:selection}}

\begin{proof}[Proof of Lemma~\ref{lem:bound.uniform}:]
By Bonferronis inequality the left-hand side in \eqref{eq:bound.uniform} is bounded from above by
\begin{align*}
\sum_{m \in \mathcal{M}_n} \P & \lrr{ \lrv{ \log \frac{\hat{\rho}^2(m)}{\rho^2(m)} } \geq \eps} \\
& \leq \sum_{m \in \mathcal{E}_n} 31 \abs{m} \exp \lrr{ - n \, \lrr{ \frac{\abs{m_1}}{n}}^2 \lrr{1-\frac{\abs{m}}{n} }^5 \frac{\eps^2}{14397(1+\eps)^2} }.
\end{align*}
where the inequality follows from \eqref{eq:help.bound} in Theorem~\ref{th:help.bound}. Replacing $\abs{m}$ by $s_n$ and $\abs{m_1}$ by $r_n$ in every summand gives an upper bound for the sum on the right-hand side in the preceding display. The statement of the Lemma follows now by replacing the sum by the number of summands.
Alternatively, we can use the upper bound in \eqref{eq:help.bound.m} in Theorem~\ref{th:help.bound} to bound the left-hand side in \eqref{eq:bound.uniform} from above by 
\begin{gather*}
\sum_{m \in \mathcal{M}_n} 31 \abs{m} \exp \lrr{ - n \, \ \lrr{1-\frac{\abs{m}}{n} }^5 \frac{\eps^2}{28279(1+\eps)^2(1+\mu(m))^2} },
\end{gather*}
where $\mu(m) = \theta'\Sigma(m) \theta/\sigma^2(m)$. We can upper bound the sum in the preceding display by replacing $\abs{m}$ by $s_n$ in every summand. Noting that $1+\mu(m) = (\sigma^2(m)+\theta'\Sigma(m)\theta)/\sigma^2(m) = \Var(y)/\sigma^2(m) \leq \Var(y)/\sigma^2 \leq d$ and replacing the sum by the number of summands gives the result. 

\end{proof}

\begin{proof}[Proof of Corollary~\ref{cor:bound.best}:]
Because $m_n^*$ is a minimizer of $\rho^2(\cdot)$, the left-hand side in \eqref{eq:true.perf} equals $\P (\log (\rho^2(\hat{m}_n^*)/ \rho^2(m_n^*) ) \geq \eps)$. On the event where $\hat{\rho}^2(m_n^*)>0$ as well as $\hat{\rho}^2(\hat{m}^*)>0$, which happens with probability 1, we have that
\begin{gather*}
\log \frac{\rho^2(\hat{m}_n^*)}{\rho^2(m_n^*)} = \log \frac{\rho^2(\hat{m}_n^*)}{\hat{\rho}^2(\hat{m}_n^*)} + \log \frac{\hat{\rho}^2(\hat{m}_n^*)}{\hat{\rho}^2(m_n^*)} + \log\frac{\hat{\rho}^2(m_n^*)}{\rho^2(m_n^*)}.
\end{gather*}
Because $\hat{m}_n^*$ is a minimizer of $\hat{\rho}^2(\cdot)$, the second term is nonpositive and we can bound the left-hand side in \eqref{eq:true.perf} from above by
\begin{align} 
 \P \lrr{ \log \frac{\hat{\rho}^2(\hat{m}_n^*)}{\rho^2(\hat{m}_n^*) } \leq - \eps/2 } +  \P \lrr{ \log \frac{\hat{\rho}^2( m_n^*)}{\rho^2(m_n^*) } \geq \eps/2 }. \label{eq:true.perf.1}
\end{align}
Using Bonferroni's inequality, we can bound the sum in \eqref{eq:true.perf.1} from above by
\begin{align*} 
& \sum_{m \in \mathcal{M}_n} \lrs{ \P \lrr{ \log  \frac{ \hat{\rho}^2(m)}{\rho^2(m) } \geq \eps/2 } +  \P \lrr{ \log \frac{ \hat{\rho}^2( m )}{ \rho^2(m) } \leq -\eps/2 }}.
\end{align*}
Using the same arguments as in the proof of the previous lemma shows the statements in \eqref{eq:true.perf} and \eqref{eq:true.perf.m}. The statements in \eqref{eq:est.perf} and \eqref{eq:est.perf.m} are direct consequences of Lemma~\ref{lem:bound.uniform}.
 \end{proof}

\section{Technical details for Section~\ref{sec:inference}}

\begin{proof}[Proof of Theorem~\ref{th:bound.tv}:]
Recall that $\mathbb{L}(m)$ denotes a Gaussian distribution with zero mean and variance equal to $\rho^2(m)$, i.e., $\mathcal{N}(0, \rho^2(m))$, and that $\widehat{\mathbb{L}}(m)$ denotes a Gaussian distribution with zero mean and variance equal to $\hat{\rho}^2(m)$, i.e., $\mathcal{N}(0, \hat{\rho}^2(m))$. Using the fact that $\Vert \mathcal{N}(0, \hat{\rho}^2(m)) - \mathcal{N}(0, \rho^2(m)) \Vert_{TV} = \Vert \mathcal{N}(0, \hat{\rho}^2(m)/\rho^2(m)) - \mathcal{N}(0, 1) \Vert_{TV}$ together with Lemma D.1 in \cite{leeb09}, we have
\begin{gather} \label{eq:bound.tv.1}
\Vert \widehat{\L}(m) - \L(m) \Vert_{TV} \leq \frac{\lrv{ \log(\hat{\rho}^2(m)/\rho^2(m))} }{\sqrt{2 \pi \exp(1)}} \leq \frac{ \lrv{ \log(\hat{\rho}^2(m)/\rho^2(m)) } }{4}.
\end{gather}
Hence, the statements follow from Theorem~\ref{th:help.bound} with $4 \eps$ replacing $\eps$.
\end{proof}

\begin{proof}[Proof of Corollary~\ref{cor:bound.tv.uniform}:]
Using the inequality in \eqref{eq:bound.tv.1}, we can bound the left-hand side in \eqref{eq:bound.tv.uniform} from above by
\begin{gather*}
\P \lrr{ \sup_{m \in \mathcal{M}_n} \Vert \widehat{\L}(m) - \L(m) \Vert_{TV} \geq \eps} \leq \P \lrr{  \sup_{m \in \mathcal{M}_n }\lrv{ \log \frac{\hat{\rho}^2(m)}{\rho^2(m)}} \geq 4 \eps }.
\end{gather*}
The result follows from Lemma~\ref{lem:bound.uniform} with $4 \eps$ replacing $\eps$.
\end{proof}

\begin{proof}[Proof of Corollary~\ref{cor:pi.valid}:]
Note that we have $y^{(0)} \in \mathcal{I}(\hat{m}_n^*)$ if and only if $y^{(0)} - \hat{y}^{(0)}(m) \in \lrs{ - Q_{(1-\alpha/2)} \hat{\rho}(\hat{m}_n^*),  Q_{(1-\alpha/2)} \hat{\rho}(\hat{m}_n^*) }$. Denote the interval in squared brackets by $A$ and let $\mathbb{L}(m, A)$ and $\mathbb{\widehat{L}}(m,A)$ denote the probability of $A$ under $\mathbb{L}(m)$ and $\mathbb{\widehat{L}}(m)$, respectively. Note that the prediction interval was chosen such that $\mathbb{\widehat{L}}(\hat{m}_n^*, A) = 1-\alpha$. Hence, the left-hand side of \eqref{eq:pi.valid} can be rewritten as
\begin{gather*}
\lrv{ \mathbb{\widehat{L}}(\hat{m}_n^*, A) - \mathbb{L}(\hat{m}_n^*, A)}.
\end{gather*}
But this term is clearly bounded from above by the total variation distance of $\mathbb{\widehat{L}}(\hat{m}_n^*)$ and $\mathbb{L}(\hat{m}_n^*)$ and the result follows from Corollary~\ref{cor:bound.tv.uniform}.
\end{proof}

\begin{proof}[Proof of Corollary~\ref{cor:pi.short}:]
Because $m_n^*$ and $\hat{m}_n^*$ are minimizers of $\rho^2(\cdot)$ and $\hat{\rho}^2( \cdot)$, respectively, we have the following inequality
\begin{gather*}
\log \frac{\hat{\rho}^2(\hat{m}_n^*)}{\rho^2(\hat{m}_n^*)} \leq \log \frac{\hat{\rho}^2(\hat{m}_n^*)}{\rho^2(m_n^*)} \leq \log \frac{\hat{\rho}^2(m_n^*)}{\rho^2(m_n^*)}.
\end{gather*}
We use the fact that $\log (\hat{\rho}(\hat{m}_n^*)/\rho(m_n^*)) =  \log (\hat{\rho}^2(\hat{m}_n^*)/\rho^2(m_n^*))/2$ together with the second inequality in the preceding display followed by Bonferroni's inequality to get
\begin{gather*}
\P \lrr{ \log \frac{\hat{\rho}(\hat{m}_n^*)}{\rho(m_n^*)} \geq \eps }  \leq \sum_{m \in \mathcal{M}_n } \P \lrr{ \log \frac{\hat{\rho}^2(m)}{\rho^2(m)} \geq 2 \eps }.
\end{gather*}
Similarly but using the first inequality in the second-to-last display, we get
\begin{gather*}
\P \lrr{ \log \frac{\hat{\rho}^2(\hat{m}_n^*)}{\rho^2(m_n^*)} \leq - \eps } \leq \sum_{m \in \mathcal{M}_n } \P \lrr{ \log \frac{\hat{\rho}^2(m)}{\rho^2(m)} \leq  - 2\eps }.
\end{gather*}
Taken together, we can bound the left-hand side in \eqref{eq:pi.short} from above by
\begin{gather*}
 \sum_{m \in \mathcal{M}_n } \P \lrr{ \lrv{ \log \frac{\hat{\rho}^2(m)}{\rho^2(m)} } \geq 2 \eps }
\end{gather*}
and the result follows from the same arguments as in the proof of Lemma~\ref{lem:bound.uniform} with $\eps$ replaced by $2 \eps$.
\end{proof}

\end{appendix}

\newpage

\bibliographystyle{abbrvnat}
\bibliography{/Users/ninahuber/Literatur/shorttitles,/Users/ninahuber/Literatur/Literatur}

\end{document}